\newtheorem{theorem}{Theorem}[section]
\newtheorem{lemma}[theorem]{Lemma}
\newtheorem{proposition}[theorem]{Proposition}
\newtheorem{corollary}[theorem]{Corollary}
\newenvironment{proof}{\trivlist
  \item[\hskip\labelsep{\itshape Proof.}]\upshape}{\nobreak\noindent
  $\square$\endtrivlist}
\newenvironment{other}[1]{\refstepcounter{theorem}\trivlist
  \item[\hskip\labelsep{\itshape #1~\thesection.\arabic{theorem}.}]
  \upshape}{\endtrivlist\bigbreak}
\DeclareMathOperator\Hom{Hom}
\DeclareMathOperator\gr{gr}
\DeclareMathOperator\height{ht}
\DeclareMathOperator\Pol{Pol}
\DeclareMathOperator\Irr{Irr}
\DeclareMathOperator\rep{rep}
\newcommand\id{\mathord{\mathrm{id}}}
\newcommand\Gr{\mathord{\mathcal G\kern-.4ptr}}
\newcommand\dimvec{\underline{\mathop{\mathrm{dim}}}}
\newcommand\inter[2]{\llbracket#1,#2\rrbracket}
\newcommand\polyline[1]{[\kern1pt#1\kern1pt]}
\newcommand\BM{{\text{BM}}}
\begin{document}
\title{On Mirković--Vilonen polytopes}
\author{Pierre Baumann}
\date{}
\maketitle

\begin{abstract}
\noindent
Mirković--Vilonen polytopes encode in a geometrical way the numerical data
present in the Kashiwara crystal $B(\infty)$ of a semisimple group $G$.
We retrieve these polytopes from the coproduct of the Hopf algebra
$\mathscr O(N)$ of regular functions on a maximal unipotent subgroup
$N$ of $G$. We bring attention to a remarkable behavior that the
classical bases (dual canonical, dual semicanonical, Mirković--Vilonen)
of $\mathscr O(N)$ manifest with respect to the extremal points of these
polytopes, which extends the crystal operations. This study leans on a
notion of stability for graded bialgebras.
\end{abstract}

\section{Introduction}
\label{se:Introduction}
Let $G$ be a split semisimple, or more generally a split reductive group
over a field $k$ of characteristic zero. The representation theory of
$G$ is well understood thanks to Weyl's theory of the highest weight.
Let $B$ be a Borel subgroup of $G$, let $T$ be a maximal torus of $B$,
let $P$ be the character lattice of $T$, and let $N$ be the unipotent
radical of $B$. Then all the irreducible rational $G$-modules can be
embedded as $N$-modules in the algebra $\mathscr O(N)$ of regular
functions on $N$, which thus plays a central role in the theory.
Moreover, the images of these embeddings are invariant under the action
of $T$ on $\mathscr O(N)$ deduced from the action on $N$ by conjugation.

It is therefore natural to pay attention to the $N$-submodules of
$\mathscr O(N)$ that are invariant under the action of $T$. A basic
piece of combinatorial information associated to such modules is the set
of their weights. Conversely, given a convex polytope $K$ in the vector
space $P\otimes_{\mathbb Z}\mathbb R$, one can consider the largest
$T$-invariant $N$-submodule $\mathscr S(K)$ of $\mathscr O(N)$ whose
weights lie in $K$. The map $K\mapsto\mathscr S(K)$ then defines
a filtration of $\mathscr O(N)$ indexed by the set of all convex
polytopes, endowed with the partial order given by inclusion.
The places in the filtration where the jumps occur,
to wit the convex polytopes $K$ such that
$\mathscr S(K)\neq\sum_{K'\subsetneq K}\mathscr S(K')$, certainly
reflect the structure of the $N$-module $\mathscr O(N)$.

It turns out that these polytopes are already known in the literature:
they are the Mirković--Vilonen polytopes introduced by
Anderson~\cite{Anderson}. The name comes from the Geometric Satake
Equivalence, where the irreducible rational $G$-modules are realized as
the intersection cohomology of affine Schubert varieties. In this context,
Mirković and Vilonen selected algebraic cycles whose fundamental classes
form a basis of the cohomology~\cite{MirkovicVilonen}, and the polytopes
appear as the images of these cycles by the moment map of a natural
Hamiltonian action of a torus.

Another occurrence of these polytopes was found in
\cite{BaumannKamnitzerTingley}. The context here is the representation
theory of the preprojective algebra $\Lambda$ build from the Dynkin
diagram of $G$ (assumed to be of simply-laced type).
To a $\Lambda$-module $M$, one can associate its Harder--Narasimhan
polytope, that is, the convex hull in the Grothendieck group
$K(\Lambda\text{-mod})\otimes_{\mathbb Z}\mathbb R$ of the
dimension-vectors of all the submodules of $M$. Now the representation
spaces $\rep(\Lambda,\nu)$ of $\Lambda$ (one for each dimension-vector
$\nu$) are algebraic varieties, and it makes sense to speak of a
general point in an irreducible component of these varieties.
The Mirković--Vilonen polytopes then appear as the Harder--Narasimhan
polytopes of general $\Lambda$-modules.

The fact that the moment polytopes of Mirković and Vilonen's cycles
coincide with the Harder--Narasimhan polytopes of general
$\Lambda$-modules seems miraculous. The initial proof of this coincidence
was indeed rather circuitous. On the one hand, the moment polytopes were
described using Berenstein, Fomin and Zelevinsky's Chamber Ansatz and
Braverman and Gaitsgory's crystal structure on the set of Mirković and
Vilonen's cycles~\cite{Kamnitzer07,Kamnitzer10}. On the other hand, the
Harder--Narasimhan polytopes were studied with the help of tilting theory
on $\Lambda$-modules \cite{BaumannKamnitzer,BaumannKamnitzerTingley}.
The two descriptions were then equated thanks to Saito's work on
Lusztig data and reflections in the crystal $B(\infty)$~\cite{Saito}.

In this paper we propose a more straightforward argument.
Through the Geometric Satake Equivalence, the fundamental class in
cohomology of a Mirković--Vilonen cycle $Z$ can be viewed as a basis
element $b_Z$ in an irreducible rational $G$-module, which we view as
embedded in $\mathscr O(N)$. Gathering the elements $b_Z$ coming from
all the irreducible rational $G$-modules, we obtain a basis of
$\mathscr O(N)$, called the Mirković--Vilonen
basis~\cite{BaumannKamnitzerKnutson}.
Similarly, the study of $\Lambda$-modules leads to a basis of
$\mathscr O(N)$, called the dual semicanonical
basis~\cite{GeissLeclercSchroer05,Lusztig00}.
This basis consists of elements $\rho_Y$, where $Y$ denotes
an irreducible component of a variety $\rep(\Lambda,\nu)$.
Lastly, to a function $f\in\mathscr O(N)$, homogeneous with respect
to the weight grading, we associate $\underline\Pol(f)$, the convex hull
in $P\otimes_{\mathbb Z}\mathbb R$ of the set of weights of the
$N$-submodule generated by $f$. Said differently, in terms of the Hopf
algebra structure of~$\mathscr O(N)$, the polytope $\underline\Pol(f)$
is the convex hull of the weights of the elements $b_i$ that appear in
a minimal writing $\sum_{i=1}^nb_i\otimes c_i$ of the coproduct of $f$.

We will show that: (1) For each irreducible component $Y$ of
$\rep(\Lambda,\nu)$, the Harder--Narasimhan polytope of a general
point $M$ of $Y$ is equal to $-\underline\Pol(\rho_Y)$.
(2) For each Mirković--Vilonen cycle $Z$, the moment polytope of
$Z$ is equal to $\underline\Pol(b_Z)$.
(3) Both the dual semicanonical basis and the Mirković--Vilonen
basis are compatible with the filtration $K\mapsto\mathscr S(K)$,
in the sense that each subspace $\mathscr S(K)$ is spanned by subsets
of either basis.

By construction, $\mathscr S(K)$ is the vector space spanned by the
functions $f$ such that $\underline\Pol(f)\subset K$. Then (3)
implies that the jumps in the filtration $K\mapsto\mathscr S(K)$
occur precisely at the polytopes $\underline\Pol(\rho_Y)$ and
$\underline\Pol(b_Z)$, so these two families of polytopes are the
same, up to the indexing.

The proof of (1) is almost contained in \cite{BaumannKamnitzerTingley}:
each vertex $\mu$ of the Harder--Narasimhan polytope of a $\Lambda$-module
$M$ is the dimension-vector of a unique submodule $M_t$ of $M$.
Moreover, if $M$ is a general point of an irreducible component $Y$
of $\rep(\Lambda,\nu)$, then $M_t$ and $M/M_t$ are general points
in irreducible components $Y_t$ and $Y_f$ of $\rep(\Lambda,\mu)$
and $\rep(\Lambda,\nu-\mu)$, respectively.
One can then easily show that the term $\rho_{Y_t}\otimes\rho_{Y_f}$
occurs in the coproduct of $\rho_Y$, which implies that $-\mu$ belongs
to $\underline\Pol(\rho_Y)$. Assertion (2) is proved by a similar
argument: one can cut a Mirković--Vilonen cycle at each vertex of its
moment polytope, thereby producing two other cycles $Z_t$ and $Z_s$
such that $b_{Z_t}\otimes b_{Z_s}$ occurs in the coproduct of~$b_Z$
(see section~\ref{ss:MVPolite} for the accurate statement). These
observations can also be put to good use to prove (3) (see the proofs
of Proposition~\ref{pr:SCPolite} and Theorem~\ref{th:MVPolite}).

The dual semicanonical basis and the Mirković--Vilonen basis are
examples of perfect bases of $\mathscr O(N)$. This notion, introduced
by Berenstein and Kazhdan~\cite{BerensteinKazhdan}, codifies a specific
behavior that a basis exhibits regarding the action of the Chevalley
generators of the Lie algebra of~$N$. The perfect bases of
$\mathscr O(N)$ are parameterized by a common combinatorial object,
namely Kashiwara's crystal~$B(\infty)$, and it is possible to recover
the Mirković--Vilonen polytopes from the crystal operations on
$B(\infty)$~\cite{Kamnitzer07}. Perfect bases enjoy a number of nice
properties (see for instance \cite{BaumannKamnitzerKnutson} for a
review), but are generally not compatible with the subspaces
$\mathscr S(K)$. However, as we saw in (3) above, the dual
semicanonical basis and Mirković and Vilonen's
basis are compatible with them, and as it happens, the dual of
Lusztig's canonical basis~\cite{Lusztig93} (Kashiwara's upper global
basis~\cite{Kashiwara}) also is. It then seems desirable to record
this feature in a definition. With this aim in view, we shall
introduce the notion of polite basis. Polite bases are perfect in
the sense of Berenstein and Kazhdan, but the converse is not true.

As mentioned above, if $f$ is a homogeneous element in
$\mathscr O(N)$ with respect to the weight grading and if
$\Delta(f)=\sum_{i=1}^nb_i\otimes c_i$ is a minimal writing of the
coproduct of $f$, then $\underline\Pol(f)$ is the convex hull of the
degrees of the elements $b_i$. With this notation, a basis $\mathbf B$
is polite if each subspace $\mathscr S(K)$ is spanned by its
intersection with $\mathbf B$ and if, for each $f\in\mathbf B$ and
each vertex $\mu$ of $\underline\Pol(f)$, there is just one term
$b_i\otimes c_i$ in $\Delta(f)$ such that $b_i$ has weight $\mu$,
and for this $i$ both $b_i$ and $c_i$ belong to $\mathbf B$. The
precise definition we adopt in section~\ref{ss:PoliteBases} combines the
two conditions at the price of seeming intricate. One reason for
this choice is that the filtration is more conveniently described
when we regard polytopes as intersections of closed half-spaces.
In other words, we look at the polytopes through a plane projection
of the weight lattice.

The plan of this paper is the following. Section~\ref{se:Stability}
adapts Harder and Narasimhan's notion of stability to the case of
a connected bialgebra graded by a submonoid $\Gamma$ of the half-plane;
its main result is Theorem~\ref{th:Factorization}.
In section~\ref{se:PolytBases}, we apply this theory to
$\mathscr O(N)$ and define the notion of polite basis
(Definition~\ref{de:PoliteBasis}); we prove that the semicanonical
basis is polite (Proposition~\ref{pr:SCPolite}) and that polite
bases are perfect (Proposition~\ref{pr:Perfectness}).
Section~\ref{se:MVBasis} deals with Mirković and Vilonen's basis;
in particular we explain how to cut a Mirković--Vilonen cycle at a
vertex of its moment polytope (Proposition~\ref{pr:CuttingMVCycles})
and we prove that this basis is polite (Theorem~\ref{th:MVPolite}).

\textit{Acknowledgements.}
The work reported in this paper originates from a stay at Osaka Central
Advanced Mathematical Institute (OCAMI) in 2016. The author thanks this
institution, especially Yoshiyuki~Kimura for the invitation and the
useful discussions which ensued. He is also grateful to Joel Kamnitzer
for more than a decade of conversations on this topic and for comments
which helped improve the writing of this paper. Thanks are also due
to Frédéric Chapoton, JiaRui Fei and Dinakar Muthiah for relevant
suggestions.

\section{Stability for graded connected bialgebras}
\label{se:Stability}
In this section we consider a discrete submonoid $\Gamma$ of the right
half-plane
$$\bigl\{(r,d)\in\mathbb R^2\bigm|\text{$r>0$ or
($r=0$ and $d\geq0$)}\bigr\}$$
and a connected $\Gamma$-graded bialgebra
$A=\bigoplus_{\gamma\in\Gamma}A^\gamma$ over a field $k$. The
connectedness assumption means that $A^0=k$, where $0=(0,0)$ is the
identity element of~$\Gamma$.

Our aim is to explain a procedure to factorize $A$, patently inspired by
the method that Harder and Narasimhan introduced to study vector bundles
on a curve (see for instance \cite{Andre} for a survey of slope filtrations
and stability).

We denote the comultiplication and the augmentation maps of $A$ by
$\Delta$ and $\varepsilon$, respectively. If $a$ is a homogeneous element
in $A$, we write $|a|=\gamma$ to indicate that $a\in A^\gamma$.
Given $(\alpha,\beta)\in\Gamma^2$, the notation $\alpha-\beta$ will be
reserved to the case where the result of the subtraction in $\mathbb R^2$
actually belongs to $\Gamma$.

\subsection{Semistable elements}
\label{ss:SemistableElements}
For any homogeneous element $a\in A$, we denote by $L(a)$ (respectively,
$R(a)$) the set of all $\beta$ (respectively, $\gamma$) such that
$\Delta(a)$ has a nonzero component along a summand
$A^\beta\otimes A^\gamma$ in the decomposition
$$A\otimes A=\bigoplus_{(\beta,\gamma)\in\Gamma^2}
\bigl(A^\beta\otimes A^\gamma\bigr).$$
The obvious relation $R(a)=|a|-L(a)$ connects these two sets. If we
write $\Delta(a)=\sum_{i=1}^nb_i\otimes c_i$ with each $b_i$ and $c_i$
homogeneous and $n$ minimal, then
$$L(a)=\{|b_1|,\ldots,|b_n|\}\quad\text{and}\quad
R(a)=\{|c_1|,\ldots,|c_n|\}.$$
Since $A$ is connected (and assuming that $a\neq0$), we have
$\{0,|a|\}\subset L(a)$; specifically the homogeneous components of
bidegree $(0,|a|)$ (respectively, $(|a|,0)$) is $1\otimes a$
(respectively, $a\otimes1$).

\begin{lemma}
\label{le:LbiRci}
Let $a\in A$ be a homogeneous element and write
$\Delta(a)=\sum_{i=1}^nb_i\otimes c_i$ as above. Then
$L(b_i)\subset L(a)$ and $R(c_i)\subset R(a)$ for all $i\in\inter1n$.
\end{lemma}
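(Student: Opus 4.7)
\emph{Proof proposal.} The natural tool is coassociativity. A preliminary observation, which I would take for granted, is that minimality of the writing $\Delta(a)=\sum_{i=1}^nb_i\otimes c_i$ forces both $\{b_1,\ldots,b_n\}$ and $\{c_1,\ldots,c_n\}$ to be linearly independent: a nontrivial linear relation on either factor would allow one to reduce $n$.

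To establish $L(b_{i_0})\subset L(a)$, fix $\beta\in L(b_{i_0})$ and project the coassociativity identity
$$(\Delta\otimes\id)\circ\Delta(a)=(\id\otimes\Delta)\circ\Delta(a)$$
onto the graded component of tridegree $(\beta,\,|b_{i_0}|-\beta,\,|c_{i_0}|)$ of $A\otimes A\otimes A$. The left-hand side $\sum_i\Delta(b_i)\otimes c_i$ projects to a sum of the form $\sum_i u_i\otimes c_i$, where only indices with $|b_i|=|b_{i_0}|$ and $|c_i|=|c_{i_0}|$ contribute; the summand $u_{i_0}$ is the bidegree $(\beta,|b_{i_0}|-\beta)$ part of $\Delta(b_{i_0})$, which is nonzero by the choice of $\beta$. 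Since the surviving $c_i$'s are linearly independent, the expression as a whole is nonzero. The corresponding projection of the right-hand side $\sum_i b_i\otimes\Delta(c_i)$ therefore cannot vanish either; but only terms with $|b_i|=\beta$ appear in it, so at least one such $b_i$ exists, proving $\beta\in L(a)$.

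The inclusion $R(c_{i_0})\subset R(a)$ follows by the mirror argument, projecting onto the tridegree $(|b_{i_0}|,\,|c_{i_0}|-\gamma,\,\gamma)$ for $\gamma\in R(c_{i_0})$ and invoking linear independence of the $b_i$'s to conclude that the left-hand side of coassociativity cannot vanish after projection, which forces some $c_i$ to have degree $\gamma$.

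The only subtle ingredient is the equivalence between minimality of $n$ and linear independence of the two families $\{b_i\}$ and $\{c_i\}$; once that is in hand, the rest is a direct manipulation with graded components of coassociativity, and the particular nature of $\Gamma$ plays no role.
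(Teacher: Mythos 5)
Your proof is correct and uses essentially the same ingredients as the paper's: coassociativity of $\Delta$, the linear independence of the families $(b_i)$ and $(c_i)$ that follows from minimality of $n$, and an extraction of the relevant term. The only presentational difference is that the paper applies a linear form $f$ picking out $c_{i_0}$ to obtain the full identity $\Delta(b_{i_0})=\sum_j b_j\otimes\bigl((\id\otimes f)\circ\Delta(c_j)\bigr)$ in one stroke, whereas you check each weight $\beta\in L(b_{i_0})$ by projecting the coassociativity equation onto a single graded component; both are valid and equivalent in substance.
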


\begin{proof}
Both families $(b_i)_{i\in\inter1n}$ and $(c_i)_{i\in\inter1n}$ consist of
linearly independent elements. Pick $i\in\inter1n$ and choose a linear form
$f:A\to k$ that vanishes on $c_j$ for all $j\neq i$ and that verifies
$f(c_i)=1$. The coassociativity axiom implies
\begin{align*}
\Delta(b_i)=(\Delta\otimes f)\circ\Delta(a)
&=(\id_A\otimes\id_A\otimes f)\circ(\Delta\otimes\id_A)\circ\Delta(a)\\
&=(\id_A\otimes((\id_A\otimes f)\circ\Delta))\circ\Delta(a)\\
&=\sum_{j=1}^nb_j\otimes((\id_A\otimes f)\circ\Delta(c_j)),
\end{align*}
which shows that $L(b_i)\subset L(a)$. The inclusion $R(c_i)\subset R(a)$
is proved in a similar way.
\end{proof}

We set $\overline{\mathbb R}=\mathbb R\cup\{\infty\}$; here $\infty$ is
regarded as larger than any real number. We define
$$\Gamma_\infty=\{(r,d)\in\Gamma\mid r=0\}\quad\text{and}\quad
\Gamma_\mu=\{(r,d)\in\Gamma\mid d=\mu r\}$$
for $\mu\in\mathbb R$. Given $\mu\in\overline{\mathbb R}$, we set
$$\Gamma_{\leq\mu}=\bigcup_{\nu\leq\mu}\Gamma_\nu\quad\text{and}\quad
\Gamma_{\geq\mu}=\bigcup_{\nu\geq\mu}\Gamma_\nu.$$

Let $\mu\in\overline{\mathbb R}$. A homogeneous element $a\in A$ is said
to be of slope $\mu$ if $|a|\in\Gamma_\mu$. It is said to be semistable of
slope $\mu$ if moreover $L(a)\subset\Gamma_{\leq\mu}$, or equivalently
$R(a)\subset\Gamma_{\geq\mu}$.

We denote by $A_\mu$ (respectively, $A_{[\mu]}$) the subspace of $A$
spanned by homogeneous elements of slope $\mu$ (respectively, semistable
elements of slope $\mu$). We denote by $p_\mu:A\to A_\mu$ the projection
according to the decomposition
$$A=A_\mu\oplus\left(\bigoplus_{\gamma\in\Gamma\setminus\Gamma_\mu}
A^\gamma\right)$$
and we set $\Delta_\mu=(p_\mu\otimes p_\mu)\circ\Delta$.

\begin{proposition}
\label{pr:StableBialgebras}
The subspace $A_{[\mu]}$ is a subalgebra of $A_\mu$. Further,
$\Delta_\mu(A_{[\mu]})\subset A_{[\mu]}\otimes A_{[\mu]}$ and $A_{[\mu]}$
becomes a bialgebra when endowed with the coproduct $\Delta_\mu$.
\end{proposition}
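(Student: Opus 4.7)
The plan is to reduce the proposition to two elementary combinatorial facts about $\Gamma$: (a) both $\Gamma_\mu$ and $\Gamma_{\leq\mu}$ are closed under addition inside $\Gamma$, and more generally the sum of a weight of strict slope $<\mu$ with a weight of slope $\leq\mu$ has strict slope $<\mu$ (symmetrically for $>\mu$ and $\geq\mu$); (b) whenever $\gamma_1+\gamma_2\in\Gamma_\mu$ with $\gamma_i\in\Gamma$, one has $\gamma_1\in\Gamma_\mu$ if and only if $\gamma_2\in\Gamma_\mu$. Both are routine checks from the definition of $\Gamma_\nu$.

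With these in hand, the subalgebra claim is immediate: for homogeneous $a,a'\in A_{[\mu]}$, the algebra-morphism property of $\Delta$ combined with (a) gives $L(aa')\subset\Gamma_{\leq\mu}$, so $aa'\in A_{[\mu]}$, and together with $1\in A_{[\mu]}$ this yields a subalgebra of $A_\mu$. Similarly, for $a\in A_{[\mu]}$ with minimal homogeneous decomposition $\Delta(a)=\sum_i b_i\otimes c_i$, fact (b) partitions the indices into $M=\{i:|b_i|,|c_i|\in\Gamma_\mu\}$ and its complement, and $\Delta_\mu(a)$ is exactly the $M$-part. For each $i\in M$, Lemma~\ref{le:LbiRci} combined with the semistability of $a$ gives $L(b_i)\subset\Gamma_{\leq\mu}$ and $R(c_i)\subset\Gamma_{\geq\mu}$, putting $b_i$ and $c_i$ in $A_{[\mu]}$. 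Hence $\Delta_\mu(A_{[\mu]})\subset A_{[\mu]}\otimes A_{[\mu]}$.

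Coassociativity of $\Delta_\mu$ is then obtained by applying $p_\mu\otimes p_\mu\otimes p_\mu$ to the coassociativity identity $(\Delta\otimes\id)\Delta=(\id\otimes\Delta)\Delta$ restricted to $A_{[\mu]}$: on either side the outermost $p_\mu$ annihilates the summands indexed outside $M$ (their outer tensor factor is off slope), and the surviving $M$-contributions reassemble into $(\Delta_\mu\otimes\id)\Delta_\mu$ and $(\id\otimes\Delta_\mu)\Delta_\mu$, respectively. The counit axiom for $\varepsilon|_{A_{[\mu]}}$ is immediate once one notes that the two distinguished summands $1\otimes a$ and $a\otimes 1$ of $\Delta(a)$ automatically lie in $M$.

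The main obstacle is the bialgebra compatibility $\Delta_\mu(aa')=\Delta_\mu(a)\Delta_\mu(a')$. Decomposing $\Delta(a)=X+Y$ with $X=\Delta_\mu(a)$ and $Y$ its complementary part, and similarly $\Delta(a')=X'+Y'$, one obtains $\Delta(aa')=XX'+XY'+YX'+YY'$; by (a) already $XX'\in A_\mu\otimes A_\mu$, contributing exactly $\Delta_\mu(a)\Delta_\mu(a')$ under $p_\mu\otimes p_\mu$. The remaining task is to show that $p_\mu\otimes p_\mu$ annihilates the three cross terms, and this is precisely where semistability is indispensable: each $b_i$ appearing in $Y$ has $|b_i|$ of strict slope $<\mu$ and each $c_i$ in $Y$ of strict slope $>\mu$, so by the strict form of (a), the left tensor factor of every summand in $XY'$ or $YY'$ has weight of strict slope $<\mu$, and the right factor of every summand in $YX'$ has weight of strict slope $>\mu$; in each case $p_\mu$ kills the relevant tensor factor.
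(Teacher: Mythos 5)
Your proof is correct and takes essentially the same route as the paper's: your cross-term computation for the multiplicativity of $\Delta_\mu$ on $A_{[\mu]}$ is the concrete unpacking of the paper's observation that $p_\mu$ restricts to algebra morphisms $A_{\leq\mu}\to A_\mu$ and $A_{\geq\mu}\to A_\mu$ (so that $\Delta_\mu|_{A_{[\mu]}}$ is a composite of algebra morphisms), and the inclusion $\Delta_\mu(A_{[\mu]})\subset A_{[\mu]}\otimes A_{[\mu]}$ rests on Lemma~\ref{le:LbiRci} in exactly the same way. You additionally spell out the coassociativity and counit verifications that the paper dismisses as routine.
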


\begin{proof}
Set $A_{\leq\mu}=\sum_{\nu\leq\mu}A_\nu$ and $A_{\geq\mu}=\sum_{\nu\geq\mu}
A_\nu$. Then $A_\mu$, $A_{\leq\mu}$ and $A_{\geq\mu}$ are
subalgebras of $A$ and $p_\mu$ restricts to morphisms of algebras
$A_{\leq\mu}\to A_\mu$ and $A_{\geq\mu}\to A_\mu$. It follows that
$A_{[\mu]}=A_\mu\cap\Delta^{-1}(A_{\leq\mu}\otimes A_{\geq\mu})$
is a subalgebra of~$A$ and that the composition
$$A_{[\mu]}\xrightarrow\Delta A_{\leq\mu}\otimes A_{\geq\mu}
\xrightarrow{p_\mu\otimes p_\mu}A_\mu\otimes A_\mu$$
is a morphism of algebras.

Now let $a\in A$ be a homogeneous semistable element of slope $\mu$ and
write $\Delta(a)=\sum_{i=1}^nb_i\otimes c_i$ with each $b_i$ and $c_i$
homogeneous and $n$ minimal. By Lemma~\ref{le:LbiRci}
$$L(b_i)\subset L(a)\subset\Gamma_{\leq\mu}\quad\text{and}\quad
R(c_i)\subset R(a)\subset\Gamma_{\geq\mu}$$
for each $i\in\inter1n$, so $b_i$ and $c_i$ are semistable as soon as
they are of slope~$\mu$. Therefore
$\Delta_\mu(a)\in A_{[\mu]}\otimes A_{[\mu]}$. Routine verifications show
then that $A_{[\mu]}$ equipped with the coproduct $\Delta_\mu$ fulfills
the requirements to be a bialgebra.
\end{proof}

\subsection{Ordered Monomials}
\label{ss:Factorization}
For any homogeneous element $a\in A$ of nonzero degree, we denote the convex
hull of $L(a)$ in $\mathbb R^2$ by $\Pol(a)$. The (closed) region between
the upper rim of $\Pol(a)$---which starts at $0$ and ends at $|a|$---and
the straight line between these two points will be of particular interest;
we denote it by $\Pol^\wedge(a)$. Obviously $a$ is semistable if and only
if $\Pol^\wedge(a)$ is just the straight line between $0$ and $|a|$.

\begin{other}{Definition}
An \textit{ordered monomial} is a product $a_1a_2\cdots a_p$ of homogeneous
semistable elements of nonzero degree, ordered so that their slopes form
a decreasing sequence.
\end{other}

\begin{lemma}
\label{le:OMPolygon}
Let $a=a_1a_2\cdots a_p$ be an ordered monomial.
\begin{enumerate}
\item
The upper rim of the polytope $\Pol(a)$ is the polygonal line
$\polyline{0,|a_1|,|a_1|+|a_2|,\ldots,|a|}$.
\item
Let $j\in\inter0p$ and let $\alpha=|a_1|+\cdots+|a_j|$ be the $j$-th
extremal point on the upper rim of $\Pol(a)$. Then the homogeneous
component of $\Delta(a)$ of bidegree $(\alpha,|a|-\alpha)$ is
$a_1\cdots a_j\otimes a_{j+1}\cdots a_p$.
\end{enumerate}
\end{lemma}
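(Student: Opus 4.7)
The plan is to use that $\Delta$ is an algebra morphism to expand
$$\Delta(a)=\Delta(a_1)\Delta(a_2)\cdots\Delta(a_p).$$
Writing $\Delta(a_i)=\sum_{\beta\in L(a_i)}b_{i,\beta}\otimes c_{i,\beta}$ with each summand homogeneous of bidegree $(\beta,|a_i|-\beta)$, distribution produces
$$\Delta(a)=\sum_{(\beta_1,\ldots,\beta_p)\in\prod_iL(a_i)}(b_{1,\beta_1}\cdots b_{p,\beta_p})\otimes(c_{1,\beta_1}\cdots c_{p,\beta_p}),$$
and the summand indexed by $(\beta_1,\ldots,\beta_p)$ lies in bidegree $(\sum_i\beta_i,|a|-\sum_i\beta_i)$. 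Semistability gives $L(a_i)\subset\Gamma_{\leq\mu_i}$, and connectedness of $A$ furnishes the canonical terms $b_{i,0}\otimes c_{i,0}=1\otimes a_i$ and $b_{i,|a_i|}\otimes c_{i,|a_i|}=a_i\otimes1$.

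Both assertions reduce to the following uniqueness claim: for $j\in\inter{0}{p}$ and $\alpha:=|a_1|+\cdots+|a_j|$, the only tuple $(\beta_1,\ldots,\beta_p)\in\prod_iL(a_i)$ with $\sum_i\beta_i=\alpha$ is $\beta_i=|a_i|$ for $i\leq j$ and $\beta_i=0$ for $i>j$. To prove it I would choose $\lambda$ in the open interval $(\mu_{j+1},\mu_j)$ (with the conventions $\mu_0=\infty$ and $\mu_{p+1}=-\infty$) and examine the linear functional $\ell_\lambda(r,d)=d-\lambda r$. A short case analysis—using $L(a_i)\subset\Gamma_{\leq\mu_i}$ together with the symmetric condition $R(a_i)\subset\Gamma_{\geq\mu_i}$, which confines $L(a_i)\cap\Gamma_{\mu_i}$ to the segment $[0,|a_i|]$—shows that $\ell_\lambda$ attains a unique maximum on each $L(a_i)$: at $|a_i|$ when $\lambda<\mu_i$ and at $0$ when $\lambda>\mu_i$. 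Additivity of $\ell_\lambda$ then forces any tuple summing to $\alpha$ to realize each individual maximum, yielding the canonical tuple.

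Statement (ii) is now immediate: the bidegree-$(\alpha,|a|-\alpha)$ component of $\Delta(a)$ is the single term $(a_1\cdots a_j)\otimes(a_{j+1}\cdots a_p)$. For (i), set $\alpha_j:=|a_1|+\cdots+|a_j|$. Since $a\neq0$ every prefix and suffix product is nonzero, so by (ii) the element $\alpha_j$ belongs to $L(a)$. The same uniqueness argument shows that $\alpha_j$ is the unique maximizer of $\ell_\lambda$ on $L(a)$ for $\lambda\in(\mu_{j+1},\mu_j)$, hence it is a vertex of $\Pol(a)$ on its upper rim. Letting $\lambda$ sweep $\mathbb R$, the maximizer of $\ell_\lambda$ changes only at the critical values $\lambda=\mu_i$, so the upper rim of $\Pol(a)$ is exactly the polygonal line $\polyline{0,\alpha_1,\ldots,\alpha_p}$.

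The chief obstacle is the uniqueness step, specifically verifying that $|a_i|$ is the unique maximizer of $\ell_\lambda$ on $L(a_i)$ for $\lambda<\mu_i$. This rests on semistability in both directions: $L(a_i)\subset\Gamma_{\leq\mu_i}$ controls the slope, while $R(a_i)\subset\Gamma_{\geq\mu_i}$ forces the $r$-coordinates of points of $L(a_i)$ to be bounded above by that of $|a_i|$. The edge case $\mu_i=\infty$ (where $L(a_i)$ itself lies inside $\Gamma_\infty$) will need a separate verification, and one must be careful that $L(a)$ can be strictly smaller than $\sum_iL(a_i)$ because of possible cancellations in the algebra—though the uniqueness of maximizers precisely prevents any cancellation at the relevant bidegrees.
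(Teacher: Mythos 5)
Your proof is correct and follows essentially the same approach as the paper: it rests on the multiplicativity of $\Delta$ together with the observation that each partial sum $|a_1|+\cdots+|a_j|$ has a unique decomposition as a sum from $L(a_1)\times\cdots\times L(a_p)$, which is exactly the geometric fact about the Minkowski sum of the segments $[0,|a_i|]$ that the paper invokes. Your explicit verification via the linear functionals $\ell_\lambda$ simply unpacks the Minkowski-sum statement that the paper leaves as a "geometric consideration," and your treatment of the edge case $\mu_i=\infty$ and of potential cancellations is sound.
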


\begin{proof}
As the element $a_j$ is homogeneous and semistable, the upper rim
of $\Pol(a_j)$ is the line $\polyline{0,|a_j|}$. The condition on
the slopes then entails that the upper rim of the Minkowski sum
$\Pol(a_1)+\cdots+\Pol(a_p)$ is the polygonal line
$\polyline{0,|a_1|,|a_1|+|a_2|,\ldots,|a|}$. Furthermore, given
$j\in\inter0p$, the only tuple
$(\alpha_1,\ldots,\alpha_p)\in\Pol(a_1)\times\cdots\times\Pol(a_p)$
such that $\alpha_1+\cdots+\alpha_p=|a_1|+\cdots+|a_j|$ is
$(\alpha_1,\ldots,\alpha_p)=(|a_1|,\ldots,|a_j|,0,\ldots,0)$.
The lemma follows from these geometric considerations, using the
multiplicativity of the coproduct and the connectedness of $A$.
\end{proof}

Ordered monomials span the algebra $A$. More precisely:

\begin{proposition}
\label{pr:Expansion}
Any homogeneous element $a$ of nonzero degree is the sum of ordered
monomials $a_1$, \dots, $a_\ell$ such that $|a_i|=|a|$ and
$\Pol^\wedge(a_i)\subset\Pol^\wedge(a)$ for each $i\in\inter1\ell$.
\end{proposition}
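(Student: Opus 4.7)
My plan is to argue by a double induction. The outer induction is on $|a|\in\Gamma$, well-ordered lexicographically (a total order compatible with addition, since $\Gamma$ is discrete in the right half-plane); the inner induction, which handles the residual step below, is on $\Pol^\wedge(a)$ ordered by inclusion of lattice polytopes. If $a$ is semistable, then $a$ itself is the desired ordered monomial of length one. Otherwise let $0=\alpha_0,\alpha_1,\ldots,\alpha_p=|a|$ be the vertices of the upper rim of $\Pol(a)$, with slopes $\mu_1>\cdots>\mu_p$.

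The first step is to peel off a semistable piece of slope $\mu_1$ at $\alpha_1$. I would write the bidegree $(\alpha_1,|a|-\alpha_1)$ component of $\Delta(a)$ in minimal form $\sum_je_j\otimes f_j$. Since $\alpha_1$ is the first breakpoint of the upper rim, every point of $L(a)$ has slope at most $\mu_1$; Lemma~\ref{le:LbiRci} then yields $L(e_j)\subset L(a)\subset\Gamma_{\leq\mu_1}$, so each $e_j$ is semistable of slope~$\mu_1$. Dually, $R(f_j)\subset R(a)$ forces $L(f_j)\subset L(a)-\alpha_1$, whence $\Pol^\wedge(f_j)\subset\Pol^\wedge(a)-\alpha_1$, whose upper-rim slopes are all at most $\mu_2<\mu_1$. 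Since $|f_j|\prec|a|$, the outer induction applies to $f_j$ and yields an expansion $f_j=\sum_kf_j^{(k)}$ into ordered monomials whose factors have slopes $<\mu_1$. Each $e_jf_j^{(k)}$ is then itself an ordered monomial, and the Minkowski-sum description from Lemma~\ref{le:OMPolygon} gives $\Pol^\wedge(e_jf_j^{(k)})\subset\Pol^\wedge(a)$.

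Next I treat the residual $a'=a-\sum_je_jf_j$, whose degree still equals $|a|$; this is where the inner induction enters. A direct computation using the multiplicativity of $\Delta$ shows that the only pairing contributing to the bidegree $(\alpha_1,|a|-\alpha_1)$ slot of $\Delta(e_j)\Delta(f_j)$ is the one coming from $(e_j\otimes 1)(1\otimes f_j)$, because any other pairing would demand a point of $L(a)$ strictly beyond $\alpha_1$ along the ray of slope $\mu_1$ from the origin---precluded by $L(a)\subset\Gamma_{\leq\mu_1}$ together with the extremality of $\alpha_1$. Hence $\alpha_1\notin L(a')$. All other new elements of $L(a')$ take the form $\beta=z-s\alpha_1$ with $z\in L(a)$ and $s\in[0,1]$, and the identity
$$d_\beta-\mu_kr_\beta=(d_z-\mu_kr_z)-s(\mu_1-\mu_k)r_{\alpha_1}$$
combined with $\mu_1\geq\mu_k$ shows that $\beta$ satisfies every upper-rim half-space of $\Pol(a)$; so any such $\beta$ sitting above the baseline $[0,|a|]$ already lies inside $\Pol^\wedge(a)$, while new points below that baseline are invisible to $\Pol^\wedge$. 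Together with the exclusion of $\alpha_1$, this gives $\Pol^\wedge(a')\subsetneq\Pol^\wedge(a)$, and the inner induction then decomposes $a'$ into ordered monomials whose polytopes lie in $\Pol^\wedge(a')\subset\Pol^\wedge(a)$; combining with the pieces $e_jf_j^{(k)}$ produced earlier yields the required expansion of~$a$.

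The delicate point of the argument is precisely this strict shrinkage $\Pol^\wedge(a')\subsetneq\Pol^\wedge(a)$: verifying that every ``new'' degree introduced by the correction term either is invisible below the baseline or sits inside $\Pol^\wedge(a)$. Once that is in hand the two inductions close on each other; the rest is bookkeeping around Lemmas~\ref{le:LbiRci} and~\ref{le:OMPolygon} and the Minkowski geometry of ordered monomials.
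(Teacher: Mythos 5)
Your proof follows essentially the same strategy as the paper: locate a breakpoint $\alpha$ of the upper rim of $\Pol(a)$, subtract off the products coming from the bidegree-$(\alpha,|a|-\alpha)$ slot of $\Delta(a)$, show the remainder $a'$ and the cofactors have strictly smaller $\Pol^\wedge$, and induct. There are two genuine variations worth noting. First, where the paper peels at an \emph{arbitrary} angular point $\alpha$ and must therefore expand \emph{both} cofactors $b_i$ and $c_i$ by induction (and then argue at the end that concatenating the resulting ordered monomials across the vertex $\alpha$ still produces ordered monomials), you always peel at the \emph{first} breakpoint $\alpha_1$, which forces the left cofactors $e_j$ to be semistable automatically. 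This is a nice simplification: it eliminates the final recombination argument that the paper needs. Second, the paper inducts on the single integer $N(a)=|\Gamma\cap\Pol^\wedge(a)|$; your double induction (outer on $|a|$, inner on $\Pol^\wedge(a)$ by inclusion) accomplishes the same thing but is more elaborate than necessary — the inner step already decreases $N$, so a single induction on $N$ would absorb both.

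The one place your write-up is not quite right as stated is the residual step. The claim ``all other new elements of $L(a')$ take the form $\beta=z-s\alpha_1$ with $z\in L(a)$ and $s\in[0,1]$'' is literally false: a point of $L(e_j)+L(f_j)$ has the form $\epsilon+(z-\alpha_1)$ with $\epsilon\in L(e_j)$ and $z=\alpha_1+\gamma\in L(a)$, and $\epsilon$ need not lie \emph{on} the segment $[0,\alpha_1]$, only \emph{below} it. What is true — and what your half-space computation actually establishes once stated correctly — is that such $\epsilon+z-\alpha_1$ lies \emph{at or below} the point $z-s'\alpha_1$ with $s'=1-r_\epsilon/r_{\alpha_1}\in[0,1]$, which is enough because lowering the ordinate only helps the inequality $d-\mu_k r\le c_k$. (This is equivalent to the concavity inequality $f(y)+f(z)\le f(x)+f(y+z-x)$ that the paper uses.) With that correction your argument is sound; but you should also say a word about why the degenerate case $r_{\alpha_1}=0$ (slope $\mu_1=\infty$) causes no trouble, and be explicit that new points below the baseline do not contribute to $\Pol^\wedge(a')$, so $\Pol^\wedge(a')\subsetneq\Pol^\wedge(a)$ indeed follows from the removal of $\alpha_1$.
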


The proof uses a method introduced by Schiffmann \cite{Schiffmann}.

\begin{proof}
The statement is obviously true if $a$ itself is already semistable.
In the rest of the proof, we assume that $a$ is not semistable (in
particular it is not of slope $\infty$) and proceed by induction on
the number $N(a)$ of points in $\Gamma\cap\Pol^\wedge(a)$.

Let $r$ be the abscissa of $|a|$. The upper rim of the polytope $\Pol(a)$
is the graph of a piecewise linear concave function $f:[0,r]\to\mathbb R$
and has at least one angular point. We choose such a point, let us say
$\alpha$; it belongs to $L(a)$.

We write $\Delta(a)=\sum_{i=1}^nb_i\otimes c_i$ with each $b_i$ and $c_i$
homogeneous and $n$ minimal. Let $I\subset\inter1n$ be the set of all
indices $i$ such that $|b_i|=\alpha$ and set $a'=a-\sum_{i\in I}b_ic_i$.
We will show that $N(b_i)<N(a)$ and $N(c_i)<N(a)$ for all $i\in I$ and
that $N(a')<N(a)$. Applying the induction hypothesis, we may then write
these elements $a'$, $b_i$ and $c_i$ as sums of ordered monomials and
obtain thereby the desired expression for $a$.

Let $x$ denote the abscissa of $\alpha$. Let us pick $i\in I$. By
Lemma~\ref{le:LbiRci} we have $L(b_i)\subset L(a)$. Therefore $\Pol(b_i)$
is contained in $\Pol(a)\cap([0,x]\times\mathbb R)$. Noting that
the line $\mathbb R|b_i|$ lies above $\mathbb R|a|$ in the band
$[0,x]\times\mathbb R$, we obtain $\Pol^\wedge(b_i)\subset\Pol^\wedge(a)$
and conclude that $N(b_i)<N(a)$ (the inequality is strict because the
point $|a|$ is in $\Pol^\wedge(a)$ but not in $\Pol^\wedge(b_i)$).
Likewise, $R(c_i)\subset R(a)$. Since $|a|-|c_i|=|b_i|=\alpha$, this can
be rewritten as $\alpha+L(c_i)\subset L(a)$. Therefore $\alpha+\Pol(c_i)$
is contained in $\Pol(a)\cap([x,r]\times\mathbb R)$. Noting that the
line $\alpha+\mathbb R|c_i|$ lies above $\mathbb R|a|$ in the band
$[x,r]\times\mathbb R$, we obtain
$\alpha+\Pol^\wedge(c_i)\subset\Pol^\wedge(a)$ and conclude that
$N(c_i)<N(a)$.

Now let $\beta\in L(b_i)$ and $\gamma\in L(c_i)$. Both $\beta$ and
$\alpha+\gamma$ belong to $\Pol(a)$. Let $y$ and $z$ denote the abscissae
of these two points; then $0\leq y\leq x\leq z\leq r$. Also, $\alpha$ is
the point $(x,f(x))$, $\beta$ lies below the point $(y,f(y))$, and
$\alpha+\gamma$ lies below the point $(z,f(z))$, so $\beta+\gamma$ lies
below the point $(y+z-x,f(y)+f(z)-f(x))$. Since $f$ is concave, we have
$f(y)+f(z)\leq f(x)+f(y+z-x)$, and therefore $\beta+\gamma$ lies below
$(y+z-x,f(y+z-x))$, that is, below the upper rim of $\Pol(a)$. To sum up,
all the points in $L(b_i)+L(c_i)$ lie below the upper rim of $\Pol(a)$.
Moreover the concavity inequality is strict if $(y,z)\neq(x,x)$ (because
$\alpha$ is an extremal point of $\Pol(a)$), so $(\alpha,0)$ is the only
pair $(\beta,\gamma)\in L(b_i)\times L(c_i)$ for which $\beta+\gamma$
lies on the upper rim of $\Pol(a)$.

Since $\Delta$ is a morphism of algebras, we have
$L(b_ic_i)\subset L(b_i)+L(c_i)$. From the previous paragraph, we deduce
that for each $i\in I$, all the points in $L(b_ic_i)$ lie below the upper
rim of $\Pol(a)$, and that moreover the homogeneous component of degree
$(\alpha,|a|-\alpha)$ in $\Delta(b_ic_i)$ is $b_i\otimes c_i$. We conclude
that all the points in $L(a')$ lie below the upper rim of $\Pol(a)$ and
that $\Delta(a')$ does not contain any term of degree $(\alpha,|a|-\alpha)$.
In other words $\Pol^\wedge(a')$ is contained in
$\Pol^\wedge(a)\setminus\{\alpha\}$. It follows that $N(a')<N(a)$,
as announced.

Now pick $i\in I$, and consider two concave polygonal lines, one that joins
$0$ to $\alpha$ and is contained in $\Pol^\wedge(b_i)$, and another one
that joins $\alpha$ to $|a|$ and is contained in $\alpha+\Pol^\wedge(c_i)$.
Arguments given earlier in the proof imply that the concatenation of these
two lines lies in $\Pol^\wedge(a)$; in addition, this concatenated line is
concave because $\alpha$ is an extremal point of $\Pol(a)$. These
geometrical observations guarantee that, when we substitute in
$a'+\sum_{i\in I}b_ic_i$ expansions of $a'$, $b_i$ and $c_i$ obtained by
induction, we obtain an expression for $a$ that satisfies the requirements.
\end{proof}

Let $J\subset\overline{\mathbb R}$. For any finite subset $M\subset J$,
we can form the vector space $\bigotimes_{\mu\in M}A_{[\mu]}$, where the
tensor product is computed in the decreasing order. An inclusion
$M_1\subset M_2\subset J$ gives rise to an injective linear map
$\bigotimes_{\mu\in M_1}A_{[\mu]}\to\bigotimes_{\mu\in M_2}A_{[\mu]}$
defined by inserting the unit element $1$ at each factor that occurs only
in the second product. We denote the limit of this inductive system by
$\bigotimes_{\mu\in J}A_{[\mu]}$. Inclusions
$J_1\subset J_2\subset\overline{\mathbb R}$ give rise to injective
linear maps $\bigotimes_{\mu\in J_1}A_{[\mu]}\to\bigotimes_{\mu\in J_2}
A_{[\mu]}\to\bigotimes_{\mu\in\overline{\mathbb R}}A_{[\mu]}$.

\begin{theorem}
\label{th:Factorization}
The multiplication in $A$ induces a bijective linear map
$m:\bigotimes_{\mu\in\overline{\mathbb R}}A_{[\mu]}\to A$.
\end{theorem}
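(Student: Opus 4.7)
The plan is to treat surjectivity and injectivity separately. Surjectivity is immediate from what precedes: by Proposition~\ref{pr:Expansion} every homogeneous element of nonzero degree is a sum of ordered monomials $a_1 a_2 \cdots a_p$, and grouping the consecutive factors sharing a common slope $\mu$ into a single element of $A_{[\mu]}$---which is a subalgebra by Proposition~\ref{pr:StableBialgebras}---exhibits such a monomial as the image under $m$ of a pure tensor. Elements of $A^0 = k$ are covered by the empty tensor product.

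For injectivity I proceed by induction on the number of factors $r$ of a finite tensor product $A_{[\nu_1]} \otimes \cdots \otimes A_{[\nu_r]}$ with $\nu_1 > \cdots > \nu_r$; this suffices since the domain is the direct limit of these. The base case $r = 1$ is the inclusion. For the inductive step, let $T$ lie in the kernel and decompose $T = \sum_\alpha T_\alpha$ according to the degree $\alpha \in \Gamma_{\nu_1}$ of its leftmost factor. The set $\Gamma_{\nu_1}$, being a discrete submonoid of a line through the origin, is cyclic and totally ordered by magnitude. Suppose for contradiction that $T_\alpha \neq 0$ for some $\alpha \neq 0$, and let $\alpha_*$ be the maximal such $\alpha$ in the (finite) support of $T$. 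I aim to show that the bidegree $(\alpha_*, \gamma - \alpha_*)$ component of $\Delta(m(T))$ in each total degree $\gamma$ equals $(\id \otimes m)(T_{\alpha_*}|_\gamma)$, writing $m$ also for multiplication $A_{[\nu_2]} \otimes \cdots \otimes A_{[\nu_r]} \to A$. Combined with $m(T) = 0$ and the inductive injectivity of this smaller $m$, this yields $T_{\alpha_*} = 0$, contradicting the choice of $\alpha_*$. Hence the support of $T$ reduces to $\{0\}$, and under the canonical identification $T$ lies in $A_{[\nu_2]} \otimes \cdots \otimes A_{[\nu_r]}$; then $m(T) = 0$ gives $T = 0$ by the induction hypothesis applied at the previous rank.

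The main obstacle is the extraction identity. For each $\beta$, write $T_\beta = \sum_k a^{(\beta)}_k \otimes U^{(\beta)}_k$ with $a^{(\beta)}_k \in A_{[\nu_1]}^\beta$. The bidegree $(\alpha_*, \gamma - \alpha_*)$ part of $\Delta(a^{(\beta)}_k) \cdot \Delta(m(U^{(\beta)}_k))$ is indexed by pairs $(\sigma, \tau)$ with $\sigma + \tau = \alpha_*$, $\sigma \in L(a^{(\beta)}_k)$, and $\tau \in L(m(U^{(\beta)}_k))$. Semistability of $a^{(\beta)}_k$ gives $\sigma \in \Gamma_{\leq \nu_1}$; and since $m(U^{(\beta)}_k)$ is a sum of products of semistable elements of slopes among $\nu_2, \ldots, \nu_r$, the multiplicativity of $\Delta$ together with the closure of $\Gamma_{\leq \nu_2}$ under addition gives $\tau \in \Gamma_{\leq \nu_2}$. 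A brief slope computation in the right half-plane then shows that $\tau \neq 0$ would force the slope of $\sigma = \alpha_* - \tau$ strictly above $\nu_1$, contradicting $\sigma \in \Gamma_{\leq \nu_1}$; hence $\tau = 0$ and $\sigma = \alpha_*$. This requires $\alpha_* \in L(a^{(\beta)}_k) \cap \Gamma_{\nu_1}$, which by semistability sits on the segment $[0, \beta]$ inside $\Gamma_{\nu_1}$; combined with the maximality of $\alpha_*$ in the nonzero support, this forces $\beta = \alpha_*$ (the case $\beta = 0$ is excluded because $\alpha_* \neq 0$). For $\beta = \alpha_*$, connectedness identifies the $(\alpha_*, 0)$-component of $\Delta(a^{(\alpha_*)}_k)$ with $a^{(\alpha_*)}_k \otimes 1$, and summation over $k$ delivers $(\id \otimes m)(T_{\alpha_*}|_\gamma)$, as desired.
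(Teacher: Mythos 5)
Your proof is correct and follows essentially the same strategy as the paper: surjectivity from Proposition~\ref{pr:Expansion}, and for injectivity, locating the factor of maximal slope and maximal degree and reading it off from the corresponding homogeneous component of $\Delta\circ m$. The only difference is in the bookkeeping: you organize the argument as an induction on the number of distinct slopes and conclude by the inductive injectivity of the shorter multiplication map, whereas the paper runs a minimal-counterexample argument and applies a linear functional to the leading factor to produce a contradictory shortening; these are equivalent ways of packaging the same extraction step.
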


\begin{proof}
The surjectivity of $m$ is a direct consequence of Proposition
\ref{pr:Expansion}.

Suppose that there exists a nonzero element
$$a=\sum_{i=1}^na_{i,1}\otimes\cdots\otimes a_{i,p_i}$$
in the kernel of $m$. In this writing, each $a_{i,j}$ is homogeneous
semistable of nonzero degree, and for each $i\in\inter1n$ the slope of
$a_{i,j}$ decreases when $j$ increases from $1$ to~$p_i$.

We assume that $a$ and this writing have been chosen with $n$ as small
as possible, and for this $n$, with $\max(p_1,\ldots,p_n)$ as small as
possible. This minimality assumption implies that $a$ is homogeneous and
that all the terms in the sum above have the same degree. Also, at most
one $p_i$ is equal to $1$ (all the terms with $p_i=1$ have the same
slope so they can be combined).

Let $\mu$ be the maximum value among the slopes of the elements $a_{i,1}$,
let $\alpha$ be the point that is the farthest from the origin among
those $\bigl|a_{i,1}\bigr|$ that lie on the line $\mathbb R\mu$, and
let $I\subset\inter1n$ be the subset of indices $i$ such that
$\bigl|a_{i,1}\bigr|=\alpha$. Extracting the homogeneous component
of degree $(\alpha,|a|-\alpha)$ from the equation
$$\Delta\Biggl(\sum_{i=1}^na_{i,1}\cdots a_{i,p_i}\Biggr)=0$$
with the help of Lemma~\ref{le:OMPolygon}, we obtain
$$\sum_{i\in I}a_{i,1}\otimes a_{i,2}\cdots a_{i,p_i}=0.$$
Any linear form $f:A^\alpha\to k$ then gives birth to an element
$$\sum_{i\in I}f\bigl(a_{i,1}\bigr)
a_{i,2}\otimes\cdots\otimes a_{i,p_i}$$
in the kernel of $m$. Our minimality requirement for $a$ forces this
shorter element to be zero.

Picking an $f$ that does not annihilate all the $a_{i,1}$,
we obtain a linear dependence relation between the elements
$a_{i,2}\otimes\cdots\otimes a_{i,p_i}$. With the help of this
relation, we can shorten the writing of $a$, and thereby contradict our
minimality requirement. Consequently our assumption about the existence
of $a$ was incorrect, and we conclude that $m$ is injective.
\end{proof}

The next proposition encompasses the existence of straightening relations
in $A$.
\begin{proposition}
\label{pr:IntervalSubalgebras}
If $J$ is an interval in $\overline{\mathbb R}$, then
$m\Bigl(\bigotimes_{\mu\in J}A_{[\mu]}\Bigr)$ is a subalgebra of $A$.
\end{proposition}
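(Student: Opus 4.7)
The plan is to identify $m\bigl(\bigotimes_{\mu\in J}A_{[\mu]}\bigr)$ as the subspace of $A$ spanned by $1$ and the ordered monomials whose factors all have slopes in $J$, and then to show that this subspace is closed under multiplication by combining Proposition~\ref{pr:Expansion} with a concavity argument that bounds the slopes of the ordered monomials appearing in the expansion of a product.

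The identification follows directly from the definition of the iterated tensor product and from Theorem~\ref{th:Factorization}: a pure tensor $a_{\mu_1}\otimes\cdots\otimes a_{\mu_p}$ with $a_{\mu_i}\in A_{[\mu_i]}$ and $\mu_1>\cdots>\mu_p$ in $J$ is mapped by $m$ to a linear combination of ordered monomials with slopes in $J$ (possibly collapsing to $1$ if some factors are scalar). It then suffices to check that the product of two such ordered monomials lies back in the span. So I would take $a=a_1\cdots a_p$ and $b=b_1\cdots b_q$ with all slopes $\mu_i,\nu_j$ in $J$, and apply Proposition~\ref{pr:Expansion} to $ab$: this yields an expansion $ab=\sum_i c^{(i)}$, where each $c^{(i)}=c^{(i)}_1\cdots c^{(i)}_{p_i}$ is an ordered monomial of degree $|ab|$ satisfying $\Pol^\wedge(c^{(i)})\subset\Pol^\wedge(ab)$. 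The only thing left to verify is that every factor $c^{(i)}_k$ has slope in $J$.

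For this estimate, multiplicativity of $\Delta$ gives $L(ab)\subset L(a)+L(b)$, hence $\Pol(ab)\subset\Pol(a)+\Pol(b)$. Since the two polytopes share the endpoints $0$ and $|ab|$, the upper rim of $\Pol(ab)$ lies on or below the upper rim of the Minkowski sum $\Pol(a)+\Pol(b)$, whose slopes are obtained by sorting the sequence $\mu_1,\ldots,\mu_p,\nu_1,\ldots,\nu_q$ in decreasing order and therefore all belong to $J$. The extremal points of the upper rim of $\Pol(c^{(i)})$ lie in $\Pol^\wedge(c^{(i)})\subset\Pol^\wedge(ab)$, so they sit at or below the upper rim of $\Pol(a)+\Pol(b)$. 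A direct comparison of chord slopes on this concave function then shows that the first slope of the upper rim of $\Pol(c^{(i)})$ is at most $\max(\mu_1,\nu_1)$ and its last slope is at least $\min(\mu_p,\nu_q)$. Since the slopes of an ordered monomial decrease, they all lie in $[\min(\mu_p,\nu_q),\max(\mu_1,\nu_1)]\subset J$, using that $J$ is an interval containing every $\mu_i$ and $\nu_j$.

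The main obstacle is geometric bookkeeping: one has to verify carefully that refining and concatenating concave paths whose slopes lie in a fixed interval $J$ produces concave paths still with slopes in $J$. All the algebraic content is packaged into Proposition~\ref{pr:Expansion}.
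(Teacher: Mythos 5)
Your proof is correct and follows essentially the same approach as the paper's: both expand the product via Proposition~\ref{pr:Expansion} and then bound the slopes of the factors of the resulting ordered monomials via a concavity/chord-slope comparison of $\Pol^\wedge$ against the sorted Minkowski sum. The only minor difference is that the paper computes the upper rim of $\Pol$ of the product exactly (by adapting the argument of Lemma~\ref{le:OMPolygon}), while you use only the weaker, and equally sufficient, inclusion $\Pol(ab)\subset\Pol(a)+\Pol(b)$.
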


\begin{proof}
Let $a=a_1a_2\cdots a_p$ be a product of homogeneous semistable elements
whose slopes are in $J$. Let $\sigma$ be a permutation of $\inter1p$
that reorder the sequence $(a_1,a_2,\ldots,a_p)$ by weakly decreasing
slopes. Similar arguments as in Lemma~\ref{le:OMPolygon} prove that the
upper rim of $\Pol(a)$ is the polygonal line $\polyline{0,|a_{\sigma(1)}|,
|a_{\sigma(1)}|+|a_{\sigma(2)}|,\ldots,|a|}$. Applying Proposition
\ref{pr:Expansion}, we expand $a$ as a sum of ordered monomials $b_i$
such that $|b_i|=|a|$ and $\Pol^\wedge(b_i)\subset\Pol^\wedge(a)$ for
each index~$i$. The slopes of all the semistable factors of $b_i$ are
then less than or equal to the slope of $a_{\sigma(1)}$ and greater or
equal than the slope of $a_{\sigma(p)}$, so are in $J$. We conclude that
$a$ belongs to $m\Bigl(\bigotimes_{\mu\in J}A_{[\mu]}\Bigr)$. As a
particular case, we see that this space is stable by multiplication.
\end{proof}

Given $\mu\in\mathbb R$, we define the subalgebras
$A_{[\leq\mu]}=m\Bigl(\bigotimes_{\nu\leq\mu}A_{[\nu]}\Bigr)$ and
$A_{[\geq\mu]}=m\Bigl(\bigotimes_{\nu\geq\mu}A_{[\nu]}\Bigr)$.

\begin{proposition}
\label{pr:Coideals}
The subalgebra $A_{[\leq\mu]}$ (respectively, $A_{[\geq\mu]}$) is a right
(respectively, left) coideal of $A$.
\end{proposition}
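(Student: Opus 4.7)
The plan is to establish $\Delta(A_{[\leq\mu]})\subset A_{[\leq\mu]}\otimes A$; the parallel claim that $A_{[\geq\mu]}$ is a left coideal will follow by a symmetric argument. Since $\Delta$ is a morphism of algebras and $A_{[\leq\mu]}\otimes A$ is a subalgebra of $A\otimes A$, it suffices to check the inclusion on a set of algebra generators of $A_{[\leq\mu]}$. By Theorem~\ref{th:Factorization} these may be taken to be the homogeneous semistable elements of slope $\leq\mu$, so fix such an element $a$ of slope $\nu\leq\mu$ and write $\Delta(a)=\sum_{i=1}^n b_i\otimes c_i$ minimally with $b_i,c_i$ homogeneous.

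By Lemma~\ref{le:LbiRci}, $L(b_i)\subset L(a)\subset\Gamma_{\leq\nu}$. The core of the argument is the following geometric claim: any homogeneous element $b$ with $L(b)\subset\Gamma_{\leq\nu}$ belongs to $A_{[\leq\nu]}$. Granting this, each $b_i$ lies in $A_{[\leq\nu]}\subset A_{[\leq\mu]}$, whence $\Delta(a)\in A_{[\leq\mu]}\otimes A$.

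To prove the claim, apply Proposition~\ref{pr:Expansion} and expand $b$ as a sum of ordered monomials $\beta_1\cdots\beta_q$ with $\Pol^\wedge(\beta_1\cdots\beta_q)\subset\Pol^\wedge(b)$. The hypothesis $L(b)\subset\Gamma_{\leq\nu}$ forces $\Pol(b)$ to lie in the convex cone $K_\nu=\{(r,d)\in\mathbb R^2\mid r\geq 0,\ d\leq\nu r\}$, and hence so does $\Pol^\wedge(b)$. By Lemma~\ref{le:OMPolygon}, the upper rim of $\Pol(\beta_1\cdots\beta_q)$ is the concave polygonal line $\polyline{0,|\beta_1|,|\beta_1|+|\beta_2|,\ldots,|b|}$, whose vertices all lie in $\Pol^\wedge(b)\subset K_\nu$; in particular $|\beta_1|\in K_\nu$, so the slope of $\beta_1$ is $\leq\nu$, and by the decreasing-slope condition in an ordered monomial every $\beta_j$ has slope $\leq\nu$. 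Proposition~\ref{pr:IntervalSubalgebras}, applied to the interval $\{\tau\in\overline{\mathbb R}\mid\tau\leq\nu\}$, then places each such ordered monomial, and thus $b$ itself, in $A_{[\leq\nu]}$.

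The coideal property for $A_{[\geq\mu]}$ is the mirror argument. Starting from a semistable $a$ of slope $\nu\geq\mu$, Lemma~\ref{le:LbiRci} gives $R(c_i)\subset R(a)\subset\Gamma_{\geq\nu}$, which translates, via $R(c_i)=|c_i|-L(c_i)$, to the condition that the vector from the last vertex of the upper rim of $\Pol(c_i)$ to $|c_i|$ lies in $\Gamma_{\geq\nu}$, so the last edge of that rim has slope $\geq\nu$. A concavity argument near the endpoint $|c_i|$, symmetric to the one above, then shows that the last semistable factor of any ordered-monomial expansion of $c_i$ has slope $\geq\nu$, and hence so does every factor; thus $c_i\in A_{[\geq\nu]}\subset A_{[\geq\mu]}$. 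The sole delicate step in both directions is the geometric slope-tracking, which however reduces cleanly to the concavity of the upper rim and the monotonicity of slopes inside an ordered monomial.
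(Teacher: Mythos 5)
Your proof is correct and rests on the same two ingredients as the paper's: Lemma~\ref{le:LbiRci} to propagate the slope constraint from $a$ to the tensor factors $b_i$ (resp.\ $c_i$), and the characterization of $A_{[\leq\mu]}$ (resp.\ $A_{[\geq\mu]}$) by the condition $L(a)\subset\Gamma_{\leq\mu}$ (resp.\ $R(a)\subset\Gamma_{\geq\mu}$), which as you observe comes from Proposition~\ref{pr:Expansion} and Lemma~\ref{le:OMPolygon}. The only organizational difference is your reduction to algebra generators: by checking $\Delta$ on semistable generators of fixed slope $\nu\leq\mu$, you only need the ``if'' direction of that characterization, whereas the paper invokes both directions to handle an arbitrary homogeneous $a\in A_{[\leq\mu]}$ directly; either route is fine, and the underlying slope-tracking argument is the same.
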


\begin{proof}
Lemma~\ref{le:OMPolygon} and Proposition~\ref{pr:Expansion} imply that
a homogeneous element $a$ belongs to $A_{[\leq\mu]}$ (respectively,
$A_{[\geq\mu]}$) if and only if $L(a)\subset\Gamma_{\leq\mu}$
(respectively, $R(a)\subset\Gamma_{\geq\mu}$). The proposition then
follows from Lemma~\ref{le:LbiRci}.
\end{proof}

Likewise we define subalgebras $A_{[<\mu]}$ and $A_{[>\mu]}$.

\subsection{The splitting isomorphisms}
\label{ss:Splitting}
We will need to look at the multiplication map
$A_{[>0]}\otimes A_{[0]}\otimes A_{[<0]}\to A$
through the prism of filtrations.

Let $\Pi'=\{(r,d)\in\mathbb R^2\mid\text{$d<0$ or ($d=0$ and $r\geq0$)}\}$.
For $(\alpha_1,\alpha_2)\in\Gamma^2$, we write $\alpha_1\leq'\alpha_2$ if
$\alpha_1+\Pi'\subset\alpha_2+\Pi'$. This relation $\leq'$ is a total
order on $\Gamma$.

We define an increasing filtration $F'_\bullet A$ of the vector space
$A$ by the poset $(\Gamma,\leq')$ as follows: for $\alpha\in\Gamma$,
the subspace $F'_\alpha A$ is the linear span of all homogeneous
elements $a\in A$ such that $L(a)\subset\alpha+\Pi'$. This filtration
is compatible with the algebra structure of $A$, and the associated
graded algebra $\gr'_\bullet A$ is bigraded by $\Gamma$, for the
original $\Gamma$-grading of $A$ comes through.

As an example, we see that $F'_0A=A_{[\leq0]}$ (see the proof of
Proposition~\ref{pr:Coideals}) and $F'_\alpha A=0$ if $\alpha<'0$
(if $F'_\alpha A$ contains a nonzero homogeneous element $a$, then
$0\in L(a)$ and $L(a)\subset\alpha+\Pi'$, so $0\leq'\alpha$),
therefore $\gr'_0A=A_{[\leq0]}$. Similarly, we see that
$A_{[>0]}=\bigoplus_{\gamma\in\Gamma}(F'_\gamma A)^\gamma$.

\begin{proposition}
\label{pr:Splitting'}
There is a unique linear map $\overline\Delta{}':\gr'_\bullet A\to
A_{[>0]}\otimes A_{[\leq0]}$ that sends a homogeneous element
$a\in\gr'_\alpha A$ to the homogeneous component of degree
$(\alpha,|a|-\alpha)$ of $\Delta(a)$. This map $\overline\Delta{}'$ is
an isomorphism of algebras. The inverse map is induced by the
multiplication in $A$.
\end{proposition}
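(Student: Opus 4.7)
The plan is to verify four things in turn: (a) the prescription descends to a well-defined linear map $\overline\Delta':\gr'_\bullet A\to A\otimes A$; (b) its image lies in $A_{[>0]}\otimes A_{[\leq0]}$; (c) it is multiplicative; (d) multiplication furnishes a two-sided inverse. Bijectivity will follow from Theorem~\ref{th:Factorization} applied to the partition $\overline{\mathbb R}=(0,\infty]\sqcup(-\infty,0]$, rather than being reproved from scratch.

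For (a)--(b), the key observation is that $\alpha\not\leq'\alpha'$ is equivalent to $\alpha\notin\alpha'+\Pi'$, so if $a\in F'_{\alpha'}A$ with $\alpha'<'\alpha$ then $\alpha\notin L(a)$ and the bidegree $(\alpha,|a|-\alpha)$ component of $\Delta(a)$ vanishes; the prescription therefore descends to $\gr'_\alpha A$. Writing $\Delta(a)=\sum b_i\otimes c_i$ minimally and restricting to $I=\{i\mid|b_i|=\alpha\}$, Lemma~\ref{le:LbiRci} gives $L(b_i)\subset L(a)\subset\alpha+\Pi'$, so $b_i\in(F'_\alpha A)^\alpha$, which the excerpt identifies with the $\alpha$-component of $A_{[>0]}$. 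Likewise $R(c_i)\subset R(a)=|a|-L(a)\subset(|a|-\alpha)+(-\Pi')$, hence $L(c_i)=|c_i|-R(c_i)\subset\Pi'\cap\Gamma=\Gamma_{\leq0}$ and Proposition~\ref{pr:Coideals} places $c_i$ in $A_{[\leq0]}$.

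For (c), given homogeneous $a\in F'_\alpha A$ and $a'\in F'_{\alpha'}A$, the additivity $\Pi'+\Pi'\subset\Pi'$ yields $aa'\in F'_{\alpha+\alpha'}A$. In $\Delta(aa')=\Delta(a)\Delta(a')$ any contribution to bidegree $(\alpha+\alpha',\cdot)$ comes from a pair $(\beta,\beta')\in L(a)\times L(a')$ with $\beta+\beta'=\alpha+\alpha'$; then both $\alpha-\beta$ and $-(\alpha-\beta)=\alpha'-\beta'$ lie in $\Pi'$, so by antisymmetry of $\leq'$ (equivalently $\Pi'\cap(-\Pi')=\{0\}$) one must have $\beta=\alpha$ and $\beta'=\alpha'$; the extracted component is therefore exactly $\overline\Delta'(a)\overline\Delta'(a')$. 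This is the main obstacle: it relies crucially on the cone $\Pi'$ containing no nonzero line, precisely what prevents cross-terms from polluting the top bidegree.

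For (d), Theorem~\ref{th:Factorization} and Proposition~\ref{pr:IntervalSubalgebras} give that multiplication $m':A_{[>0]}\otimes A_{[\leq0]}\to A$ is a linear bijection. Since $bc\in F'_\alpha A$ whenever $b\in A_{[>0]}^\alpha$ and $c\in A_{[\leq0]}$, composing $m'$ with the projection to $\gr'_\bullet A$ yields a linear map $\widetilde m:A_{[>0]}\otimes A_{[\leq0]}\to\gr'_\bullet A$. A direct calculation shows $\overline\Delta'(bc)=b\otimes c$: in the expansion $\Delta(b)\Delta(c)$, the bidegree $(\alpha,|c|)$ component requires $|b_{(2)}|=|c_{(1)}|$, but semistability of $b$ puts $|b_{(2)}|\in\Gamma_{>0}\cup\{0\}$ while $c\in A_{[\leq0]}$ forces $|c_{(1)}|\in\Gamma_{\leq0}$, so only $|b_{(2)}|=|c_{(1)}|=0$ survives and the sole contribution is $b\otimes c$. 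Conversely, if $\overline\Delta'$ kills a lift $a\in F'_\alpha A$, then $\alpha\notin L(a)$, and setting $\alpha^*=\max_{\leq'}L(a)<'\alpha$ places $a$ in $F'_{\alpha^*}A$, so its class vanishes in $\gr'_\alpha A$. Hence $\overline\Delta'$ and $\widetilde m$ are mutually inverse, completing the proof.
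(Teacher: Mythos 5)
Your proof is correct and essentially complete, but it chooses a somewhat different route from the paper's. The paper's own argument expresses every element of $F'_\alpha A$ as a sum of ordered monomials via Proposition~\ref{pr:Expansion}, then uses the explicit description of Lemma~\ref{le:OMPolygon} to read off the $(\alpha,|a|-\alpha)$-component of $\Delta$ on an ordered monomial (namely $a_1\cdots a_j\otimes a_{j+1}\cdots a_p$, where $a_j$ is the last factor of positive slope); well-definedness, the target, and the inverse $\overline m$ all fall out of this monomial picture, and multiplicativity is then merely asserted as inherited from $\Delta$. You instead bypass the monomial expansion for well-definedness by working directly with Lemma~\ref{le:LbiRci} and the cone arithmetic of $\Pi'$ (in particular $\Pi'+\Pi'\subset\Pi'$ and $\Pi'\cap(-\Pi')=\{0\}$), and you invoke Theorem~\ref{th:Factorization} up front to get the linear bijection $m':A_{[>0]}\otimes A_{[\leq0]}\to A$. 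What your version buys is an explicit, verifiable argument for multiplicativity (the paper glosses this step), and a cleaner separation of the linear-algebra content (Theorem~\ref{th:Factorization}) from the coalgebra content; what it costs is that you still implicitly need the characterization $A_{[>0]}=\bigoplus_\gamma(F'_\gamma A)^\gamma$, which is stated just before the proposition and itself rests on the ordered-monomial machinery.

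One small terminological slip: in step (d) you write that ``semistability of $b$ puts $|b_{(2)}|\in\Gamma_{>0}\cup\{0\}$,'' but an element $b\in A_{[>0]}$ is not semistable in general---it is a sum of ordered monomials whose factors have positive slope. The conclusion $R(b)\subset\Gamma_{>0}$ is nonetheless correct: it follows either from that ordered-monomial description, or, more directly, from $L(b)\subset|b|+\Pi'$ (the defining property of $A_{[>0]}^{|b|}=(F'_{|b|}A)^{|b|}$), which gives $R(b)=|b|-L(b)\subset\Gamma\cap(-\Pi')=\Gamma_{>0}$. Replacing the word ``semistability'' with ``membership in $A_{[>0]}$'' (and citing the discussion at the start of section~\ref{ss:Splitting} or the proof of Proposition~\ref{pr:Coideals}) would make the step airtight. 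Everything else, including the antisymmetry argument $\Pi'\cap(-\Pi')=\{0\}$ for multiplicativity and the injectivity argument via the $\leq'$-maximum of $L(a)$, is sound.
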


\begin{proof}
Let $\alpha\in\Gamma$. The vector space $F'_\alpha A$ is spanned by the
homogeneous elements $a$ that satisfy $\Pol^\wedge(a)\subset\alpha+\Pi'$,
so is spanned by the ordered monomials that satisfy this same condition
(Proposition~\ref{pr:Expansion}). Likewise, the space
$\sum_{\beta<'\alpha}F'_\beta A$ is spanned by the ordered monomials
that satisfy $\Pol^\wedge(a)\subset(\alpha+\Pi')\setminus\{\alpha\}$.
In addition, if an ordered monomial $a=a_1a_2\cdots a_p$ belongs to
$F'_\alpha A$, if $\alpha\in L(a)$, and if $j$ is the largest index
in $\inter1p$ such that $a_j$ has positive slope, then
$|a_1|+|a_2|+\cdots+|a_j|=\alpha$ and the homogeneous
component of degree $(\alpha,|a|-\alpha)$ of $\Delta(a)$
is $a_1\cdots a_j\otimes a_{j+1}\cdots a_p$.
These remarks entail that the map $\overline\Delta{}'$ is well-defined.

In the other direction, if $a$ is a homogeneous element in $A_{[>0]}$ of
degree $\alpha$ and if $b$ is in $A_{[\leq0]}$, then $ab$ belongs to
$F'_\alpha A$. Let
$\overline m:A_{[>0]}\otimes A_{[\leq0]}\to\gr'_\bullet A$ be the map
that sends $a\otimes b$ to the image of $ab$ in $\gr'_\alpha A$, where
$a$ and $b$ are as in the previous sentence. Then $\overline\Delta'$
and $\overline m$ are mutually inverse linear bijections. Further,
$\overline\Delta'$ inherits from the coproduct the property of being
an algebra morphism.
\end{proof}

Let $\Pi''=\{(r,d)\in\mathbb R^2\mid\text{$d>0$ or ($d=0$ and $r\geq0$)}\}$.
For $(\beta_1,\beta_2)\in\Gamma^2$, we write $\beta_1\leq''\beta_2$ if
$\beta_1+\Pi''\subset\beta_2+\Pi''$. This relation $\leq''$ is a total
order on $\Gamma$.

We define an increasing filtration $F''_\bullet A$ of the algebra $A$ by
the poset $(\Gamma,\leq'')$ as follows: for $\beta\in\Gamma$, the space
$F''_\beta A$ is the linear span of all homogeneous elements $a\in A$
such that $R(a)\subset\beta+\Pi''$. The associated graded algebra
$\gr''_\bullet A$ is then bigraded by $\Gamma$.

\begin{proposition}
\label{pr:Splitting''}
There is a unique linear map $\overline\Delta{}'':\gr''_\bullet A\to
A_{[\geq0]}\otimes A_{[<0]}$ that sends a homogeneous element
$a\in\gr''_\beta A$ to the homogeneous component of degree
$(|a|-\beta,\beta)$ of $\Delta(a)$. This map $\overline\Delta{}''$ is
an isomorphism of algebras. The inverse map is induced by the
multiplication in $A$.
\end{proposition}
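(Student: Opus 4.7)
The plan is to mirror the proof of Proposition \ref{pr:Splitting'}, interchanging the roles of $L$ and $R$ (equivalently, of the starting point $0$ and the terminal point $|a|$ of the upper rim of $\Pol(a)$) and of the cones $\Pi'$ and $\Pi''$. Note that $\beta_1\leq''\beta_2$ amounts to $\beta_1-\beta_2\in\Pi''$, so the $\leq''$-minimum of a finite subset of $\Gamma$ is the element with largest $d$-coordinate, breaking ties by largest $r$-coordinate.

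I would first show that $F''_\beta A$ is spanned by the ordered monomials $a=a_1\cdots a_p$ satisfying $R(a)\subset\beta+\Pi''$. The inclusion $\supset$ is immediate, and the reverse follows from Proposition \ref{pr:Expansion} together with the observation that an expansion $a=\sum_i b_i$ into ordered monomials with $|b_i|=|a|$ and $\Pol^\wedge(b_i)\subset\Pol^\wedge(a)$ gives $R(b_i)=|a|-L(b_i)\subset|a|-L(a)=R(a)$. For such an ordered monomial, Lemma \ref{le:OMPolygon} tells us that $R(a)=\{|a_j|+\cdots+|a_p|\mid j\in\inter1{p+1}\}$. Letting $j_0$ be the smallest index such that $a_{j_0}$ has strictly negative slope (with $j_0=p+1$ if no such index exists), the $\leq''$-minimum of $R(a)$ is $\beta=|a_{j_0}|+\cdots+|a_p|$, and the homogeneous component of degree $(|a|-\beta,\beta)$ of $\Delta(a)$ is $a_1\cdots a_{j_0-1}\otimes a_{j_0}\cdots a_p$, which lies in $A_{[\geq0]}\otimes A_{[<0]}$. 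Conversely, elements of $\sum_{\beta'<''\beta}F''_{\beta'}A$ contribute nothing in that degree, since $\beta$ then lies strictly above the $\leq''$-minimum of $R$ for every ordered monomial involved. This establishes that $\overline\Delta{}''$ is well-defined and takes values in $A_{[\geq0]}\otimes A_{[<0]}$.

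For the reverse map, I would note that for homogeneous $a\in A_{[\geq0]}$ and $b\in A_{[<0]}$ the product $ab$ belongs to $F''_{|b|}A$, since $R(ab)\subset R(a)+R(b)\subset\Pi''+(|b|+\Pi'')\subset|b|+\Pi''$. Composing multiplication with the projection to $\gr''_\bullet A$ therefore defines a linear map $\overline m:A_{[\geq0]}\otimes A_{[<0]}\to\gr''_\bullet A$; direct inspection on ordered monomials gives $\overline\Delta{}''\circ\overline m=\id$ and $\overline m\circ\overline\Delta{}''=\id$. Multiplicativity of $\overline\Delta{}''$ is then inherited from that of $\Delta$ via the uniqueness observation that in such a product $ab$ the only contribution to the degree $(|a|,|b|)$ component of $\Delta(ab)$ is $a\otimes b$.

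The only delicate bookkeeping point is this last compatibility of the extremal-degree selection with the product on $A_{[\geq0]}\otimes A_{[<0]}$; as in Proposition \ref{pr:Splitting'}, it is controlled entirely by the sign separation of slopes on the two tensor factors.
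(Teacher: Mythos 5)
Your proposal takes exactly the route the paper intends: Proposition~\ref{pr:Splitting''} is stated without proof precisely because it is the mirror of Proposition~\ref{pr:Splitting'}, obtained by interchanging $L$ with $R$ and $\Pi'$ with $\Pi''$, and your write-up faithfully reconstructs that argument (the well-definedness of $\overline\Delta{}''$ on each filtration quotient, the inverse $\overline m$ induced by multiplication, and multiplicativity from the coproduct). Two small slips deserve mention, though neither affects the substance. First, you correctly record that the $\leq''$-minimum of a finite subset of $\Gamma$ is the element with largest $d$-coordinate (ties broken by largest $r$), yet you then label $\beta=|a_{j_0}|+\cdots+|a_p|$ as the $\leq''$-\emph{minimum} of $R(a)$. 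Since the slopes of $a_{j_0},\ldots,a_p$ are all negative, this $\beta$ has the \emph{smallest} $d$-coordinate among the vertices $|a_j|+\cdots+|a_p|$; by your own criterion it is therefore the $\leq''$-\emph{maximum} of $R(a)$. That is the quantity you actually need: $a\in F''_\beta A$ iff $R(a)\subset\beta+\Pi''$ iff $\beta$ is an upper bound for $R(a)$ in $\leq''$, so the filtration degree of an ordered monomial is $\max_{\leq''}R(a)$, and the ``conversely'' sentence should speak of $\beta$ exceeding the $\leq''$-\emph{maximum} of $R$ for the shorter ordered monomials. Second, the inclusion $R(b_i)=|a|-L(b_i)\subset|a|-L(a)=R(a)$ presupposes $L(b_i)\subset L(a)$, which Proposition~\ref{pr:Expansion} does not give (it controls the regions $\Pol^\wedge$, not the full degree sets); likewise $R(a)$ need not coincide with the set of partial tail sums, only contain them as the vertices of the lower rim of $|a|-\Pol(a)$. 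The correct bridging step, as in the paper's proof of Proposition~\ref{pr:Splitting'}, is to note that $a\in F''_\beta A$ is equivalent to $\Pol^\wedge(a)\subset(|a|-\beta)-\Pi''$, whence $\Pol^\wedge(b_i)\subset\Pol^\wedge(a)$ immediately gives $b_i\in F''_\beta A$. With these two phrasings corrected, the proof is complete and matches the paper's intended argument.
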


The filtration $F'_\bullet$ on $A$ induces a filtration on
$\gr''_\bullet A$, and likewise the filtration $F''_\bullet$ induces
a filtration on $\gr'_\bullet A$. The associated graded algebras
$\gr'_\bullet\gr''_\bullet A$ and $\gr''_\bullet\gr'_\bullet A$ are
naturally isomorphic and are trigraded by $\Gamma$.

\begin{proposition}
\label{pr:Slicing}
There is a unique linear map
$\overline\Delta_2:\gr'_\bullet\gr''_\bullet A\to A_{[>0]}
\otimes A_{[0]}\otimes A_{[<0]}$ that sends a homogeneous element
$a\in\gr'_\alpha\gr''_\beta A$ to the homogeneous component of degree
$(\alpha,|a|-\alpha-\beta,\beta)$ of $(\Delta\otimes\id)\circ\Delta(a)$.
This map $\overline\Delta_2$ is an isomorphism of algebras. The inverse
map is induced by the multiplication in $A$.
\end{proposition}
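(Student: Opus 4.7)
The natural approach is to obtain $\overline\Delta_2$ by iterating Propositions~\ref{pr:Splitting'} and~\ref{pr:Splitting''} and invoking coassociativity. I would first apply $\overline\Delta'$ to get the algebra isomorphism $\gr'_\bullet A\cong A_{[>0]}\otimes A_{[\leq0]}$, and then transfer the filtration $F''_\bullet$ through it. The decisive observation is that $A_{[>0]}$ lies entirely in $F''_0 A$: indeed, for $a\in A_{[\mu]}$ with $\mu>0$ one has $R(a)\subset\Gamma_{\geq\mu}$, and every nonzero point of $\Gamma_{\geq\mu}$ with $\mu>0$ lies in $\Pi''$. Since $\overline\Delta'$ is multiplicatively inverted by the product $A_{[>0]}\otimes A_{[\leq0]}\to A$, one then checks that $\overline\Delta'$ identifies $F''_\beta\gr'_\bullet A$ with $A_{[>0]}\otimes (A_{[\leq0]}\cap F''_\beta A)$. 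Taking associated graded yields an algebra isomorphism $\gr''_\bullet\gr'_\bullet A\cong A_{[>0]}\otimes\gr''_\bullet A_{[\leq0]}$.

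The second step is to split $A_{[\leq0]}$ as $A_{[0]}\otimes A_{[<0]}$, exactly in the manner of Proposition~\ref{pr:Splitting''}. Mimicking that proof verbatim inside $A_{[\leq0]}$, the homogeneous component of $\Delta(a')$ of bidegree $(|a'|-\beta,\beta)$, for $a'\in F''_\beta A_{[\leq0]}$, lies in $A_{[0]}\otimes A_{[<0]}$ and defines an algebra isomorphism $\gr''_\bullet A_{[\leq0]}\cong A_{[0]}\otimes A_{[<0]}$ whose inverse is multiplication. Composing with step one and with the natural isomorphism $\gr''_\bullet\gr'_\bullet A\cong\gr'_\bullet\gr''_\bullet A$ noted in the text yields the required map $\overline\Delta_2$. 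The explicit formula emerges from coassociativity: extracting the bidegree $(\alpha,|a|-\alpha)$ component of $\Delta(a)$ (first splitting) and then extracting the bidegree $(|a|-\alpha-\beta,\beta)$ component of the second $\Delta$ applied to the right factor (second splitting) is, by $(\Delta\otimes\id)\circ\Delta=(\id\otimes\Delta)\circ\Delta$, the same as extracting the tridegree $(\alpha,|a|-\alpha-\beta,\beta)$ component of $(\Delta\otimes\id)\circ\Delta(a)$. Uniqueness of $\overline\Delta_2$ is automatic from the tridegree prescription, and both the inverse-by-multiplication property and the algebra-morphism property are inherited from the two component splittings.

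The main obstacle, and where I expect to spend most of the care, is verifying the interaction of the two filtrations in the first step: namely that $A_{[>0]}\subset F''_0 A$, that the isomorphism $\overline\Delta'$ is strictly filtered for $F''_\bullet$, and that the induced $F''_\bullet$-filtration on $A_{[\leq0]}$ is compatible with the application of $\overline\Delta''$ inside $A_{[\leq0]}$ in the second step. Each of these requires tracking how $L$ and $R$ of coproducts of products behave (as in Lemma~\ref{le:OMPolygon} and the proofs of the two splitting propositions), exploiting the ``upper-half-plane'' shape of $\Pi''$ versus the ``lower-half-plane'' shape of $\Pi'$. Once this filtration bookkeeping is settled, the remaining verifications reduce to packaging together facts already proved.
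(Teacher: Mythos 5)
The paper explicitly omits the proofs of Propositions~\ref{pr:Splitting''} and~\ref{pr:Slicing}, so there is no proof in the text with which to compare your proposal directly. That said, your route is correct and carefully thought through: iterating the two splittings and using coassociativity to convert the two successive bidegree extractions into a single tridegree extraction on $(\Delta\otimes\id)\circ\Delta(a)$ is exactly the kind of argument the omission invites. The crucial filtration checks you flag are genuine and do go through: $A_{[>0]}\subset A_{[\geq0]}=F''_0A$ (the text records this identity explicitly at the end of section~\ref{ss:Splitting}); the left tensor factors of $\Delta(a)$ for $a\in A_{[\leq0]}$ stay in $A_{[\leq0]}$ by Lemma~\ref{le:LbiRci} together with the description $A_{[\leq0]}=\{a:L(a)\subset\Gamma_{\leq0}\}$ from Proposition~\ref{pr:Coideals}, so the right coideal $A_{[\leq0]}$ does split internally as $A_{[0]}\otimes A_{[<0]}$; and the two filtrations interact in the required way on ordered monomials. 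One remark worth making: given that Proposition~\ref{pr:Splitting'} is proved directly by decomposing ordered monomials $a=a_1\cdots a_p$ at the index where the slope leaves the positive range, a more economical and arguably more in-the-spirit-of-the-paper proof of Proposition~\ref{pr:Slicing} is the one-shot analogue: cut an ordered monomial at the two indices where the slope drops to $0$ and below $0$, write $a=(a_1\cdots a_j)(a_{j+1}\cdots a_k)(a_{k+1}\cdots a_p)$, and read off the tridegree $(\alpha,|a|-\alpha-\beta,\beta)$ component of $(\Delta\otimes\id)\circ\Delta(a)$ from Lemma~\ref{le:OMPolygon}. This sidesteps the bookkeeping of transporting $F''_\bullet$ across $\overline\Delta{}'$ that you correctly identify as the delicate point of the iterative approach; but both routes are sound, and yours has the advantage of formally reusing the two previously stated splittings rather than reproving from scratch.
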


We omit the proofs of the last two propositions.
For future use, we record that $A_{[\geq0]}=F''_0A=\gr''_0A$, that
$A_{[<0]}=\bigoplus_{\gamma\in\Gamma}(F''_\gamma A)^\gamma$, and that
$A_{[0]}=F'_0A\cap F''_0A=\gr'_0\gr''_0A$.

\subsection{Duality}
\label{ss:Duality}
In this section we assume that each homogeneous component $A^\gamma$ is
finite-dimensional. The graded dual of $A$, which we denote by $B$, is also
a $\Gamma$-graded connected bialgebra. We use the same notation $\Delta$ for
the coproduct of $B$ as for $A$. We can define the bialgebras $B_{[\mu]}$
(Proposition~\ref{pr:StableBialgebras}) and we can look at the basis of
ordered monomials in $B$.

\begin{proposition}
\label{pr:Duability}
\begin{enumerate}
\item
Let $\langle\,,\,\rangle:A\times B\to k$ be the duality bracket,
and let $a$ and $b$ be ordered monomials in $A$ and $B$, respectively.
Then $\langle a,b\rangle=0$ unless $\Pol^\wedge(a)=\Pol^\wedge(b)$.
\item
For each $\mu\in\overline{\mathbb R}$, the bialgebra $B_{[\mu]}$ identifies
with the graded dual of $A_{[\mu]}$.
\end{enumerate}
\end{proposition}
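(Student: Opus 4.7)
The plan is to prove (i) by a geometric comparison of the upper rims of $\Pol(a)$ and $\Pol(b)$, and then to deduce (ii) from (i) together with Theorem~\ref{th:Factorization}. For part~(i), the pairing $\langle a,b\rangle$ vanishes unless $|a|=|b|$, so I assume this. Writing $a=a_1\cdots a_p$, the adjunction between multiplication in $A$ and comultiplication in $B$ gives
$$\langle a,b\rangle=\langle a_1\otimes\cdots\otimes a_p,\Delta^{(p-1)}(b)\rangle,$$
where $\Delta^{(p-1)}$ is the iterated coproduct. If this scalar is nonzero, $\Delta^{(p-1)}(b)$ contains a nonzero term of multidegree $(|a_1|,\ldots,|a_p|)$. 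Applying coassociativity in the form $\Delta^{(p-1)}=(\Delta^{(k-1)}\otimes\Delta^{(p-k-1)})\circ\Delta$, the partial sum $\sigma_k=|a_1|+\cdots+|a_k|$ arises as the degree of a left factor in some nonzero term of $\Delta(b)$, so $\sigma_k\in L(b)\subset\Pol(b)$. By Lemma~\ref{le:OMPolygon}(i), the $\sigma_k$ are exactly the breakpoints of the upper rim of $\Pol(a)$; since each of them lies in $\Pol(b)$, hence on or below the upper rim of $\Pol(b)$, concavity of the latter rim propagates the inequality to intermediate points, giving $\Pol^\wedge(a)\subset\Pol^\wedge(b)$. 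Swapping $a$ and $b$ and running the adjunction the other way gives the reverse inclusion, hence $\Pol^\wedge(a)=\Pol^\wedge(b)$.

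For part~(ii), Theorem~\ref{th:Factorization} provides linear isomorphisms $A\cong\bigotimes_\mu A_{[\mu]}$ and $B\cong\bigotimes_\mu B_{[\mu]}$ via products of ordered monomial factors. Part~(i) makes the pairing block-diagonal in the corresponding bases, and iterating Lemma~\ref{le:OMPolygon}(ii) shows that on a block of matching shape $(\gamma_1,\ldots,\gamma_p)$ the pairing factors as a product $\prod_i\langle a_i,b_i\rangle$ of pairings $A_{[\mu_i]}^{\gamma_i}\times B_{[\mu_i]}^{\gamma_i}\to k$. Perfectness of the global pairing therefore forces each factor to be perfect, giving $B_{[\mu]}\cong(A_{[\mu]})^*$ as graded vector spaces. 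For the bialgebra structures, the components $A_\mu$ and $B_\mu$ are orthogonal for different $\mu$, so their pairing $A_\mu\times B_\mu\to k$ is perfect; under it the multiplication of $A_\mu$ inherited from $A$ is adjoint to $\Delta_\mu^B=(p_\mu\otimes p_\mu)\circ\Delta^B$, because only the $\Gamma_\mu\times\Gamma_\mu$-graded component of $\Delta^B(b)$ can pair nontrivially with $A_\mu\otimes A_\mu$. Restricting to the sub-bialgebras $A_{[\mu]}\subset A_\mu$ and $B_{[\mu]}\subset B_\mu$ from Proposition~\ref{pr:StableBialgebras} then yields the required bialgebra duality.

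The hardest part will be the geometric step in (i), specifically handling the $\Gamma_\infty$ piece, where the upper rim acquires a vertical initial segment and is no longer the graph of a single-valued function of the $r$-coordinate. I would treat both upper rims uniformly as concave polygonal paths from $0$ to $|a|=|b|$ in $\mathbb R^2$ and phrase the desired inequality as the inclusion of $\Pol^\wedge$-regions, using the fact that every breakpoint $\sigma_k$ of the first path lies in the polytope $\Pol(b)$ and hence weakly below its upper rim, then exploiting concavity to propagate the comparison to intermediate points. Once this is handled, part~(ii) is essentially formal, flowing from Theorem~\ref{th:Factorization} and the standard duality between subalgebras and quotient coalgebras.
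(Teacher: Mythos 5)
Your proof is correct and follows essentially the same route as the paper, up to a change of organization. For part~(i), the paper argues by contradiction: assuming $|a|=|b|$ and $\Pol^\wedge(a)\neq\Pol^\wedge(b)$, it picks an extremal point $\alpha$ of the more-protruding upper rim, uses Lemma~\ref{le:OMPolygon}(i) to write $\alpha=|a_1|+\cdots+|a_j|$, notes $\alpha\notin L(b)$ (since it lies strictly above the upper rim of $\Pol(b)$), and concludes $\langle a,b\rangle=\langle(a_1\cdots a_j)\otimes(a_{j+1}\cdots a_p),\Delta(b)\rangle=0$. Your direct double-inclusion argument via the iterated coproduct is the contrapositive of this, resting on the same two ingredients (adjunction of multiplication/comultiplication and Lemma~\ref{le:OMPolygon}). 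For part~(ii), the paper simply writes ``follows from Theorem~\ref{th:Factorization}'', and your elaboration---block-diagonality of the pairing over shapes, factoring over blocks via Lemma~\ref{le:OMPolygon}(ii), and the adjointness computation for the bialgebra structure---is a correct way to fill in what the paper leaves implicit. One small caveat in your last step: $A_\mu$ with $\Delta_\mu$ is not a priori a bialgebra (only $A_{[\mu]}$ is, by Proposition~\ref{pr:StableBialgebras}), so the adjointness check $\langle aa',b\rangle=\langle a\otimes a',\Delta_\mu^B(b)\rangle$ should be carried out directly for $a,a'\in A_{[\mu]}$ and $b\in B_{[\mu]}$ rather than first at the level of $A_\mu\times B_\mu$; the computation is the same and works because $a,a'\in A_\mu$ forces only the $\Gamma_\mu\times\Gamma_\mu$-component of $\Delta^B(b)$ to contribute. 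Your concern about the $\Gamma_\infty$ piece is legitimate but already handled by treating $\Pol^\wedge$ as a region bounded by a concave polygonal path (as the paper does implicitly) rather than a function graph; neither argument actually uses single-valuedness of the upper rim.
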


\begin{proof}
Let $a=a_1\cdots a_p$ and $b=b_1\cdots b_q$ be ordered monomials in $A$
and $B$, respectively. Suppose that $|a|=|b|$, but that
$\Pol^\wedge(a)\neq\Pol^\wedge(b)$. Exchanging $A$ and $B$ if necessary,
we may assume that one extremal point on the upper rim of $\Pol(a)$, say
$\alpha$, does not belong to $\Pol^\wedge(b)$. By Lemma~\ref{le:OMPolygon},
there exists $j\in\inter1{p-1}$ such that $\alpha=|a_1|+\cdots+|a_j|$.
As the homogeneous component of degree $(\alpha,|a|-\alpha)$ of $\Delta(b)$
is zero, we have
$$\langle a,b\rangle=\langle(a_1\cdots a_j)(a_{j+1}\cdots a_p),b\rangle
=0.$$
This shows the first assertion. The second one then follows from
Theorem~\ref{th:Factorization}.
\end{proof}

\section{Polytopes and bases of $\mathscr O(N)$}
\label{se:PolytBases}

\subsection{Notation}
\label{ss:Notation}
For the rest of the paper we consider a split connected reductive group $G$
over a field $k$ of characteristic $0$. We fix a Borel subgroup $B$ and
a maximal torus $T$ contained in $B$. Let $P$ be the character lattice of
$T$, let $\Phi$ be the root system of $(G,T)$, let $\Phi_+$ (respectively,
$\Phi_-$) be the set of positive (respectively, negative) roots determined
by $B$, and let $\{\alpha_i\}_{i\in I}$ be the set of simple roots.
We denote by $Q$ (respectively, $Q_\pm$) the subgroup (respectively,
submonoid) of $P$ generated by $\Phi$ (respectively, $\Phi_\pm$).

Let $N$ be the unipotent radical of $B$ and let $\mathfrak n$ be the Lie
algebra of $N$. For $i\in I$, we choose a root vector $e_i\in\mathfrak n$
of weight $\alpha_i$. The enveloping algebra $U(\mathfrak n)$ is generated
by the elements $e_i$. It is graded by $Q_+$, with $\deg e_i=\alpha_i$.
For $(i,\ell)\in I\times\mathbb N$, the divided power $e_i^\ell/\ell!$
in $U(\mathfrak n)$ is denoted by $e_i^{(\ell)}$.

Since $N$ is an algebraic group, the algebra $\mathscr O(N)$ of
regular functions on $N$ is a Hopf algebra. It acquires a grading
$\bigoplus_{\lambda\in Q_-}\mathscr O(N)_\lambda$ by means of the
conjugation action of $T$ on $N$. The group $N$ and its Lie algebra
$\mathfrak n$ acts on both sides on $\mathscr O(N)$; our convention
is that $n\cdot f=f(\bm?n)$ and $f\cdot n=f(n\bm?)$ for all
$(n,f)\in N\times\mathscr O(N)$. Denoting by $1_N$ the identity
element of $N$, we then have $(x\cdot f)(1_N)=(f\cdot x)(1_N)$ for all
$(x,f)\in U(\mathfrak n)\times\mathscr O(N)$, so it makes sense to
define a pairing between $U(\mathfrak n)$ and $\mathscr O(N)$ by
$\langle x,f\rangle=(x\cdot f)(1_N)$. This pairing is perfect and
identifies $\mathscr O(N)$ as a bialgebra to the graded dual of
$U(\mathfrak n)$. For $i\in I$, we denote by $\zeta_i\in\mathscr O(N)$
the function of weight $-\alpha_i$ such that $\langle e_i,\zeta_i\rangle=1$.

We define the height $\height\lambda$ of an element $\lambda\in Q$ as the
sum of the coordinates of $\lambda$ in the basis $\{\alpha_i\}_{i\in I}$.
We define an involution $*$ on the vector space $\mathscr O(N)_\lambda$
by setting $f^*(n)=(-1)^{\height\lambda}f(n^{-1})$ for all
$(n,f)\in N\times\mathscr O(N)_\lambda$. This involution $*$ is extended
linearly to the whole algebra $\mathscr O(N)$.

For a function $f\in\mathscr O(N)$ of weight $\lambda$, we denote by
$\underline L(f)$ the set of all weights $\mu$ such that the coproduct
$\Delta(f)$ has a nonzero component along the summand $\mathscr O(N)_\mu
\otimes\mathscr O(N)_{\lambda-\mu}$ of $\mathscr O(N)^{\otimes2}$.
In other words, $\underline L(f)$ is the set of weights of the left
$N$-submodule of $\mathscr O(N)$ generated by $f$. We set
$Q_{\mathbb R}=Q\otimes_{\mathbb Z}\mathbb R$ and we denote the convex
hull of $\underline L(f)$ in this vector space by $\underline\Pol(f)$.
This definition mimics the notion of KLR polytope introduced in
\cite{TingleyWebster}.

In the sequel, we will just write $U$ and $\mathscr O$ for respectively
$U(\mathfrak n)$ and $\mathscr O(N)$ when this does not cause any confusion.

\subsection{Stability in $U(\mathfrak n)$ and $\mathscr O(N)$}
\label{ss:StabilityON}
In our story the dual $Q_{\mathbb R}^*$ of $Q_{\mathbb R}$ plays the
role of the space of stability parameters.

Let us pick an element $\theta\in Q_{\mathbb R}^*$. The map
$\lambda\mapsto(\height\lambda,\theta(\lambda))$ projects $Q_+$ onto
a submonoid $\Gamma_\theta$ of the right half-plane. The $Q_+$-grading
on $U$ then descends to a $\Gamma_\theta$-grading, for which the results
in section~\ref{se:Stability} can be applied. Likewise the $Q_-$-grading
on $\mathscr O$ descends to a $\Gamma_\theta$-grading by means of the
map $J_\theta:\lambda\mapsto-(\height\lambda,\theta(\lambda))$.

For each slope $\mu\in\mathbb R$, the set of positive roots $\alpha$
such that $\theta(\alpha)/\height\alpha=\mu$ is closed in $\Phi$.
The corresponding root vectors span a subalgebra $\mathfrak n_\mu$ of
the Lie algebra $\mathfrak n$. As homogeneous elements in $\mathfrak n$
are primitive hence semistable, the subalgebra of $U$ generated by
$\mathfrak n_\mu$ is contained in $U_{[\mu]}$. Comparing then Theorem
\ref{th:Factorization} with the Poincaré--Birkhoff--Witt theorem, we
conclude that $U(\mathfrak n_\mu)=U_{[\mu]}$.

We set $\mathfrak n_{>0}=\sum_{\mu>0}\mathfrak n_\mu$ and
$\mathfrak n_{<0}=\sum_{\mu<0}\mathfrak n_\mu$, and we denote by $N_{>0}$,
$N_0$ and $N_{<0}$ the closed subgroups of $N$ with Lie algebras
$\mathfrak n_{>0}$, $\mathfrak n_0$ and $\mathfrak n_{<0}$, respectively.
Then $N$ is the Zappa--Szép product\footnote{We recall that a group
$N$ is the (internal) Zappa--Szép product of two subgroups $N'$ and
$N''$ (in symbols, $N=N'\bowtie N''$) if the product in $N$ induces
a bijection $N'\times N''\to N$.} $N_{>0}\bowtie N_0\bowtie N_{<0}$.

Given a subgroup $H$ of $N$, we denote by ${}^H\mathscr O$ (respectively,
$\mathscr O^H$) the subalgebras of invariant elements in $\mathscr O$
with respect to the left (respectively, right) action of $H$. With this
notation, we deduce  from Proposition~\ref{pr:Duability} the following
explicit description:
$$\mathscr O_{[\geq0]}={}^{N_{<0}}\mathscr O,\quad
\mathscr O_{[>0]}={}^{N_0N_{<0}}\mathscr O,\quad
\mathscr O_{[\leq0]}=\mathscr O^{N_{>0}},\quad
\mathscr O_{[<0]}=\mathscr O^{N_{>0}N_0},\quad
\mathscr O_{[0]}={}^{N_{<0}}\mathscr O^{N_{>0}}.$$
Since the decomposition $N=N_{>0}\bowtie N_0\bowtie N_{<0}$ is an
isomorphism of algebraic varieties, we obtain isomorphisms from
$\mathscr O_{[>0]}$, $\mathscr O_{[0]}$ and $\mathscr O_{[<0]}$ onto
the algebras of regular functions on $N_{>0}$, $N_0$ and $N_{<0}$,
respectively.

In the next sections, we will often decorate the objects with a
label $\theta$ when we want to stress that they depend on the choice
of a stability parameter. As an example, the filtrations
defined in section~\ref{ss:Splitting} will be denoted by
${}^\theta F'_\bullet\mathscr O$ and ${}^\theta F''_\bullet\mathscr O$.
Likewise, given a weight function $f\in\mathscr O$, the subsets $L(f)$,
$\Pol(f)$ and $\Pol^\wedge(f)$ defined by the $\Gamma_\theta$-grading of
$\mathscr O$ will be adorned with an index $\theta$, so for instance
$L_\theta(f)=J_\theta(\underline L(f))$.

\subsection{Polite bases}
\label{ss:PoliteBases}
The following definition highlights properties that the dual canonical
basis, the dual semicanonical basis and Mirković and Vilonen's basis
share.

\begin{other}{Definition}
\label{de:PoliteBasis}
A basis $\mathbf B$ of $\mathscr O$ is said to be \textit{polite} if it
satisfies the following four conditions:
\begin{enumerate}
\item
The elements of $\mathbf B$ are weight vectors.
\item
For each $(i,n)\in I\times\mathbb N$, the function $\zeta_i^n$ belongs
to $\mathbf B$.
\item
For each $\theta\in Q_{\mathbb R}^*$, the basis $\mathbf B$ is compatible
with the filtration ${}^\theta F'_\bullet\mathscr O$ and with the
isomorphism defined in Proposition~\ref{pr:Splitting'}.
\item
For each $\theta\in Q_{\mathbb R}^*$, the basis $\mathbf B$ is compatible
with the filtration ${}^\theta F''_\bullet\mathscr O$ and with the
isomorphism defined in Proposition~\ref{pr:Splitting''}.
\end{enumerate}
\end{other}

The compatibility required in condition (iii) means the following.
First, for each degree $\gamma\in\Gamma_\theta$, the subspace
${}^\theta F'_\gamma\mathscr O$ should be spanned by
$\mathbf B\cap\,{}^\theta F'_\gamma\mathscr O$. In particular,
$\mathbf B$ induces bases in $\gr'_\bullet\mathscr O$,
$\mathscr O_{[\leq0]}$ and $\mathscr O_{[>0]}$. Then
$\overline\Delta'$ should map this basis of $\gr'_\bullet\mathscr O$
onto the tensor product basis of
$\mathscr O_{[\leq0]}\otimes\mathscr O_{[>0]}$.

A similar clarification is in order for condition (iv). However, one
can show that conditions (iii) and (iv) are in fact equivalent, so
including them both is redundant.

\begin{other}{Example}
The quantized enveloping algebra $U_q(\mathfrak n)$ can be categorified
by modules over the Khovanov--Lauda--Rouquier algebras. In this context,
Varagnolo and Vasserot showed that for $G$ simply laced, the elements
of the canonical basis of $U_q(\mathfrak n)$ (up to a power of $q$)
correspond to the indecomposable projective graded objects
\cite{VaragnoloVasserot}. From Tingley and Webster's results
in~\cite{TingleyWebster}, we then deduce that the dual canonical
basis is a polite basis of $\mathscr O$. Now choose
$\theta\in Q_{\mathbb R}^*$ and adopt the notation of
section~\ref{ss:StabilityON}. Then the subspaces
${}^\theta F'_0\mathscr O=\mathscr O_{[\leq0]}=\mathscr O^{N_{>0}}$ and
${}^\theta F''_0\mathscr O=\mathscr O_{[\geq0]}={}^{N_{<0}}\mathscr O$
are spanned by their intersection with the dual canonical basis.
This proves afresh a result of Kimura~\cite{Kimura}. (Kimura's
actual result is in truth deeper, as it is valid for quantized
symmetrizable Kac--Moody algebra.)
\end{other}

Given a convex polytope $K$ contained in $Q_{\mathbb R}$, we define
$\mathscr S(K)$ as the vector subspace of $\mathscr O$ spanned by
weight functions $f$ such that $\underline\Pol(f)\subset K$.

\begin{proposition}
\label{pr:ConvComp}
Let $\mathbf B$ be a polite basis of $\mathscr O$. Then for each convex
polytope $K$ in $Q_{\mathbb R}$ the subspace $\mathscr S(K)$ is spanned
by $\mathbf B\cap\mathscr S(K)$.
\end{proposition}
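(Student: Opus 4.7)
The plan is to reduce the statement to the case where $K$ is a single closed half-space, and then to realize $\mathscr S(H)$ as a sum of terms of one of the filtrations ${}^\theta F'_\bullet\mathscr O$, to which condition~(iii) of Definition~\ref{de:PoliteBasis} directly applies.

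Being a polytope, $K$ is the intersection of finitely many closed half-spaces $H_1,\ldots,H_n$ in $Q_{\mathbb R}$. The subspace $\mathscr S(K)$ is weight-graded, and in each weight $\lambda$ the set $\{f\in\mathscr O_\lambda\mid\underline\Pol(f)\subset K\}$ is already a subspace, since $\underline L(f+g)\subset\underline L(f)\cup\underline L(g)$; it equals $\bigcap_i\mathscr S(H_i)_\lambda$, giving $\mathscr S(K)=\bigcap_i\mathscr S(H_i)$. Now whenever subspaces $V_i\subset\mathscr O$ are each spanned by $\mathbf B\cap V_i$, their intersection is spanned by $\mathbf B\cap\bigcap_i V_i$: each element of $\bigcap V_i$ has a unique $\mathbf B$-expansion whose support, by uniqueness, must lie in every $V_i$. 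It therefore suffices to handle a single closed half-space $H$.

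Every closed half-space of $Q_{\mathbb R}$ has the form $H=\{\lambda\mid\theta(\lambda)\geq c\}$ for some $\theta\in Q_{\mathbb R}^*$ and $c\in\mathbb R$. For $\alpha=(r_\alpha,d_\alpha)\in\Gamma_\theta$, unpacking the definition of $\alpha+\Pi'$ shows that a weight function $g$ lies in ${}^\theta F'_\alpha\mathscr O$ iff every $\mu\in\underline L(g)$ satisfies $\theta(\mu)>-d_\alpha$, or else $\theta(\mu)=-d_\alpha$ together with $\height(-\mu)\geq r_\alpha$; in particular ${}^\theta F'_\alpha\mathscr O\subset\mathscr S(H)$ as soon as $-d_\alpha\geq c$. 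Conversely, given a weight function $f\in\mathscr S(H)$, choose $\mu^*\in\underline L(f)$ minimizing $\theta(\mu)$ and, among such minimizers, minimizing $\height(-\mu)$; then $\alpha^*:=J_\theta(\mu^*)=(\height(-\mu^*),-\theta(\mu^*))$ lies in $\Gamma_\theta$, $d_{\alpha^*}=-\theta(\mu^*)\leq -c$, and $f\in{}^\theta F'_{\alpha^*}\mathscr O$ by construction. We conclude
\[\mathscr S(H)=\sum\bigl\{{}^\theta F'_\alpha\mathscr O\bigm|\alpha\in\Gamma_\theta,\ d_\alpha\leq -c\bigr\}.\]
By condition~(iii) of Definition~\ref{de:PoliteBasis}, each summand is spanned by its intersection with $\mathbf B$, which lies in $\mathbf B\cap\mathscr S(H)$; hence so is the whole sum.

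The only delicate point is the geometric translation in the third paragraph: because $\Pi'$ mixes a strict inequality in the $d$-coordinate with a tie-breaking rule on the $r$-coordinate, the choice of $\mu^*$ must be tuned to accommodate exactly those weights of $\underline L(f)$ that saturate the $\theta$-inequality defining $H$—hence the two-step minimization, first of $\theta(\mu)$ and then of $\height(-\mu)$. Everything else is formal manipulation with the polite basis axioms.
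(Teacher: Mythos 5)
Your proof is correct, and it follows a genuinely different route from the paper's. The paper restricts at the outset to a single weight space $\mathscr O_\lambda$, forms the finite set $M=Q_-\cap(\lambda-Q_-)$ of potentially relevant weights, and invokes Hahn--Banach to manufacture finitely many pairs $(\theta_i,\gamma_i)$ with $\gamma_i\in\Gamma_{\theta_i}$ so that $K\cap M$ is cut out by the conditions $J_{\theta_i}(\mu)\in\gamma_i+\Pi'$; it then identifies $\mathscr O_\lambda\cap\mathscr S(K)$ as the finite intersection $\mathscr O_\lambda\cap\bigcap_i{}^{\theta_i}F'_{\gamma_i}\mathscr O$. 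You instead decompose $K$ as a finite intersection of half-spaces independently of the weight, reduce by a standard basis-support argument to a single half-space $H=\{\theta\geq c\}$, and then describe $\mathscr S(H)$ exactly as the increasing union $\sum\{{}^\theta F'_\alpha\mathscr O\mid d_\alpha\leq-c\}$ of levels of the filtration associated to $\theta$. This avoids the slightly delicate ``rounding'' implicit in the paper's Hahn--Banach step (choosing the levels $\gamma_i$ inside the discrete monoid $\Gamma_{\theta_i}$ while separating a finite set), and it isolates the one geometric computation that actually matters---the two-step minimization unwinding $\alpha+\Pi'$---rather than folding it into a general separation theorem. Both proofs rest on the same two pillars (condition~(iii) of Definition~\ref{de:PoliteBasis}, plus stability of $\mathbf B$-spanning under intersections and sums of graded subspaces); yours is marginally more self-contained and gives a cleaner structural description of $\mathscr S(H)$, at the cost of having to check carefully that the tie-breaking in the choice of $\mu^*$ (minimize $\theta(\mu)$, then minimize $\height(-\mu)$) matches the sign conventions of $\Pi'$, which you do correctly.
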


\begin{proof}
We may without loss of generality restrict our considerations to a
weight subspace $\mathscr O_\lambda$ with $\lambda\in Q_-$. We set
$M=Q_-\cap(\lambda-Q_-)$. The Hahn--Banach theorem implies the
existence of finitely many elements $\theta_1$, \dots, $\theta_n$ in
$Q_{\mathbb R}^*$ and of degrees $\gamma_1\in\Gamma_{\theta_1}$, \dots,
$\gamma_n\in\Gamma_{\theta_n}$ such that, for each $\mu\in M$,
$$\mu\in K\Leftrightarrow\Bigl(\forall i\in\inter1n,\
J_{\theta_i}(\mu)\in\gamma_i+\Pi'\Bigr).$$
As $\underline L(f)\subset M$ for any function $f\in\mathscr O_\lambda$,
we obtain that
$$\mathscr O_\lambda\cap\mathscr S(K)=\{f\in\mathscr O_\lambda\mid
\underline L(f)\subset K\}=\mathscr O_\lambda\cap\bigcap_{i=1}^n
\Bigl({}^{\theta_i}F'_{\gamma_i}\mathscr O\Bigr).$$
The proposition now follows from the compatibility of $\mathbf B$ with
all the subspaces occurring in the right-hand side.
\end{proof}

\begin{corollary}
\label{co:SingleMaximal}
Let $\mathbf B$ be a polite basis of $\mathscr O$, let $f\in\mathbf B$,
and let $\theta\in Q_{\mathbb R}^*$. Expand $f$ as a sum
$\sum_{i=1}^\ell a_i$ of ordered monomials with respect to the
$\Gamma_\theta$-grading of $\mathscr O$. Then there exists a term $a_i$
such that $\Pol^\wedge_\theta(f)=\Pol^\wedge(a_i)$.
\end{corollary}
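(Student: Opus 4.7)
The idea is to extract from the polite structure a distinguished ordered monomial $g_1\cdots g_k$ with $\Pol^\wedge(g_1\cdots g_k)=\Pol^\wedge_\theta(f)$ whose coefficient in the expansion of $f$ in a suitable ordered monomial basis is $1$, and then to compare with the expansion given by the statement.

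Let the upper rim of $\Pol^\wedge_\theta(f)$ have successive vertices $0=\alpha_0,\alpha_1,\dots,\alpha_k=|f|$ and slopes $\mu_1>\mu_2>\cdots>\mu_k$. The polite assumption gives condition~(iii) for every $\theta'\in Q_{\mathbb R}^*$; since $\theta-c\cdot\height$ lies in $Q_{\mathbb R}^*$ for every $c$ and its slopes differ from the $\theta$-slopes by $-c$, choosing $c\in(\mu_{j+1},\mu_j)$ isolates the highest slope among those that remain. Iterating~(iii) with such perturbations produces basis elements $g_j\in\mathbf B\cap\mathscr O_{[\mu_j]}$ with $|g_j|=\alpha_j-\alpha_{j-1}$ and a decomposition
\[ f=g_1g_2\cdots g_k+R, \]
where each summand of $R$ lies in one of the strict filtrations ${}^{\theta-c\cdot\height}F'_{<\alpha_j}\mathscr O$ arising at an intermediate stage of the iteration. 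Lemma~\ref{le:OMPolygon}(i) identifies $g_1\cdots g_k$ as an ordered monomial with $\Pol^\wedge=\Pol^\wedge_\theta(f)$, while the geometric description of the filtrations used in the proof of Proposition~\ref{pr:Splitting'} forces each homogeneous summand of $R$ to have $L_\theta$ missing the corresponding vertex~$\alpha_j$ and hence $\Pol^\wedge_\theta$ strictly contained in $\Pol^\wedge_\theta(f)$.

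The same iterated use of~(iii), now with perturbations close to $\theta-\mu\cdot\height$ on either side of any given slope~$\mu$, shows that $\mathbf B\cap\mathscr O_{[\mu]}$ is a basis of $\mathscr O_{[\mu]}$ for every $\mu$. Theorem~\ref{th:Factorization} then provides an ordered monomial basis $\mathbf B^{\mathrm{om}}_\theta$ of $\mathscr O$. Read in $\mathbf B^{\mathrm{om}}_\theta$, the decomposition above states that $g_1\cdots g_k$ occurs in the expansion of $f$ with coefficient~$1$, and Proposition~\ref{pr:Expansion} applied to each homogeneous summand of $R$ yields that all other ordered monomials occurring have $\Pol^\wedge$ strictly contained in $\Pol^\wedge_\theta(f)$.

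Starting now from the arbitrary expansion $f=\sum_{i=1}^\ell a_i$ of the statement, expand each semistable factor $a_{i,j}$ in the basis $\mathbf B\cap\mathscr O_{[\mu_{i,j}]}$ of its slope component. This yields a second writing of $f$ in $\mathbf B^{\mathrm{om}}_\theta$ in which each summand shares with its parent $a_i$ both the sequence of factor degrees and $\Pol^\wedge$. Uniqueness of the expansion in $\mathbf B^{\mathrm{om}}_\theta$ equates the two coefficients of $g_1\cdots g_k$, and this coefficient is~$1$; therefore some $a_i$ must contribute, which forces $a_i$ to have factor degrees $|g_1|,\dots,|g_k|$ and $\Pol^\wedge(a_i)=\Pol^\wedge_\theta(f)$. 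The delicate point is the polytope control of~$R$: one must verify that an element sitting in ${}^{\theta-c\cdot\height}F'_{<\alpha_j}\mathscr O$ has $L_\theta$ avoiding the vertex~$\alpha_j$, which combined with the inclusion $\Pol_\theta(x)\subset\Pol_\theta(f)$ valid for any summand $x$ of $R$ (since $R=f-g_1\cdots g_k$) places $\alpha_j$ strictly above $\Pol^\wedge_\theta(x)$.
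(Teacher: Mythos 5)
The paper proves this corollary in a few lines by invoking Proposition~\ref{pr:ConvComp}: each $a_i$ lies in $\mathscr S(\underline\Pol(a_i))$, and each such subspace is spanned by its intersection with $\mathbf B$; since $f\in\mathbf B$ and the expansion of $f$ over $\mathbf B$ is unique, writing $f=\sum a_i$ as a combination of basis vectors taken from these subsets forces $f\in\mathscr S(\underline\Pol(a_i))$ for some $i$, whence $\Pol_\theta(f)\subset\Pol_\theta(a_i)$; combined with $\Pol^\wedge_\theta(a_i)\subset\Pol^\wedge_\theta(f)$ from Proposition~\ref{pr:Expansion}, this gives $\Pol^\wedge_\theta(f)=\Pol^\wedge_\theta(a_i)$. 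Your proposal never invokes Proposition~\ref{pr:ConvComp}; instead it tries to construct, by iterating condition~(iii) with sheared stability parameters $\theta-c\cdot\height$, a distinguished ordered monomial $g_1\cdots g_k$ that would occur with coefficient $1$ in an ordered-monomial basis $\mathbf B^{\mathrm{om}}_\theta$, all other terms having strictly smaller $\Pol^\wedge$. That is a genuinely different route, closer in spirit to Proposition~\ref{pr:UpperRim} than to the paper's three-line argument, and it contains gaps as written.

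The principal gap is the justification ``$\Pol_\theta(x)\subset\Pol_\theta(f)$ valid for any summand $x$ of $R$ (since $R=f-g_1\cdots g_k$)''. Subtraction does not shrink polytopes, and $\Pol_\theta(g_1\cdots g_k)$ need not be contained in $\Pol_\theta(f)$: the paper's apparatus (Lemma~\ref{le:LbiRci}, the concavity estimate in the proof of Proposition~\ref{pr:Expansion}) only controls the \emph{upper} region $\Pol^\wedge$, not the full convex hull. A correct statement, obtainable by a concavity argument together with $\alpha_1\notin L_\theta(R)$, is the weaker inclusion $\Pol^\wedge_\theta(R)\subsetneq\Pol^\wedge_\theta(f)$; but one must then re-derive this at every stage of the iteration, and the proposal does not do so.

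A second under-specified point is the iteration itself. After the first cut one has $f=g_1h_1+R_1$ with $g_1\in\mathbf B\cap\mathscr O_{[\mu_1]}$ and $h_1\in\mathbf B$; the next step requires cutting $h_1$ at $\alpha_2-\alpha_1$. But it is not immediate that $\alpha_2-\alpha_1$ is the first vertex of $\Pol^\wedge_\theta(h_1)$: a priori one only knows $\Pol^\wedge_\theta(h_1)\subset\Pol^\wedge_\theta(f)-\alpha_1$, and showing equality along the rim requires comparing $L_\theta(f)$, $L_\theta(g_1h_1)$ and $L_\theta(R_1)$ vertex by vertex. These gaps look repairable, but repairing them would turn a short proof into a much longer one; once Proposition~\ref{pr:ConvComp} is available it does the whole job, and that is what the paper uses.
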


\begin{proof}
We may assume without loss of generality that the functions $a_i$
are homogeneous with respect to the weight grading on $\mathscr O$.
From Proposition~\ref{pr:Expansion}, we know that
$\Pol^\wedge_\theta(a_i)\subset\Pol^\wedge_\theta(f)$ for each
$i\in\inter1\ell$. On the other hand, $a_i$ belongs to
$\mathscr S(\underline\Pol(a_i))$, which is spanned by its
intersection with $\mathbf B$. So each $a_i$, and therefore
$f$, can be written as a linear combination of elements in
$\bigcup_{i=1}^\ell(\mathbf B\cap\mathscr S(\underline\Pol(a_i)))$.
As $\mathbf B$ is a basis, there necessarily exists an index
$i\in\inter1\ell$ such that $f\in\mathscr S(\underline\Pol(a_i))$.
We obtain $\Pol_\theta(f)\subset\Pol_\theta(a_i)$, then
$\Pol^\wedge_\theta(f)=\Pol^\wedge_\theta(a_i)$.
\end{proof}

The root hyperplanes draw a fan in the vector space $Q_{\mathbb R}^*$,
called the Weyl fan. We say that a convex polytope $P$ contained in
$Q_{\mathbb R}$ is GGMS if its normal fan is a coarsening of the Weyl
fan (GGMS stands for Gelfand, Goresky, MacPherson and Serganova;
see~\cite{Kamnitzer10}, section~2.4).

\begin{corollary}
\label{co:GGMS}
Let $\mathbf B$ be a polite basis of $\mathscr O$ and let $f\in\mathbf B$.
Then the polytope $\underline\Pol(f)$ is GGMS.
\end{corollary}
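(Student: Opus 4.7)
The plan is to prove this by showing that, for each open Weyl chamber $C\subset Q_{\mathbb R}^*$, the vertex $v_\theta$ of $\underline\Pol(f)$ that minimizes a generic $\theta\in C$ depends only on $C$; this amounts to the Weyl fan refining the normal fan, i.e.\ to the GGMS property.

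First I would observe that the three subalgebras $\mathscr O_{[>0]}$, $\mathscr O_{[0]}$, $\mathscr O_{[<0]}$ of section~\ref{ss:StabilityON}, being invariants for unipotent subgroups of $N$ cut out by the sign pattern of $\theta$ on the roots, depend only on the chamber $C$ containing $\theta$. Combining Theorem~\ref{th:Factorization} with Proposition~\ref{pr:Slicing}, multiplication induces a linear bijection $\mathscr O_{[>0]}\otimes\mathscr O_{[0]}\otimes\mathscr O_{[<0]}\to\mathscr O$, so $f$ admits a unique factorization $f=b_+\cdot b_0\cdot b_-$ with $b_\epsilon\in\mathscr O_{[\epsilon]}$. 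Iterating politeness conditions~(iii) and~(iv)---first using $\overline\Delta{}'$ of Proposition~\ref{pr:Splitting'} to split $f$ as $b_+\otimes c_-$, then using $\overline\Delta{}''$ of Proposition~\ref{pr:Splitting''} to split $c_-$ further---the three factors can be arranged to lie in $\mathbf B$. Consequently $\lambda_+:=|b_+|\in Q_-$ depends only on $C$.

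The crux is to verify that $\lambda_+$ equals $v_\theta$ for every generic $\theta\in C$. Let $\alpha_\theta$ denote the $\leq'$-maximum of $L_\theta(f)$, so that politeness forces the image of $f$ under $\overline\Delta{}'$ to equal the pure tensor $b_+\otimes c_-$; equivalently, the $\Gamma_\theta$-bihomogeneous component of $\Delta(f)$ at bidegree $(\alpha_\theta,|f|_\theta-\alpha_\theta)$ is $b_+\otimes c_-$. Now $b_+\otimes c_-$ is supported in the single $Q$-weight $(\lambda_+,|f|-\lambda_+)$, whereas the cited component is \emph{a priori} a sum over all $\lambda\in\underline L(f)$ with $J_\theta(\lambda)=\alpha_\theta$ of the corresponding $Q$-weight pieces of $\Delta(f)$. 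Comparison forces $\lambda_+$ to be the unique such $\lambda$. Unpacking the definition of $\leq'$, the maximality of $\alpha_\theta$ translates into: any $\mu\in\underline L(f)\setminus\{\lambda_+\}$ satisfies either $\theta(\mu)>\theta(\lambda_+)$, or $\theta(\mu)=\theta(\lambda_+)$ with $\height\mu<\height\lambda_+$. For $\theta$ generic in $C$---avoiding the finitely many hyperplanes on which two distinct elements of $\underline L(f)$ share a $\theta$-value---the second alternative is vacuous, so $\lambda_+$ is the unique $\theta$-minimizer of $\underline\Pol(f)$, hence a vertex.

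Combining the two steps, $v_\theta=\lambda_+$ for every generic $\theta\in C$, so the normal cone at this vertex contains $C$, and the normal fan of $\underline\Pol(f)$ is coarsened by the Weyl fan. I expect the subtlest point to be the compatibility between the chamber-intrinsic factorization of Proposition~\ref{pr:Slicing} and the $\theta$-dependent decomposition extracted from the polite splittings; this matching is ultimately enforced by the statement that the inverse of $\overline\Delta_2$ is induced by multiplication, tying the graded and multiplicative pictures together.
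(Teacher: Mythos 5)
Your argument hinges on the assertion that, since multiplication induces a linear bijection $m\colon\mathscr O_{[>0]}\otimes\mathscr O_{[0]}\otimes\mathscr O_{[<0]}\to\mathscr O$, the basis element $f$ admits a \emph{unique factorization as a single product} $f=b_+b_0b_-$ with each factor in the designated subalgebra. This is not what a linear bijection onto a tensor product gives: $m^{-1}(f)$ is in general a sum $\sum_j b_+^j\otimes b_0^j\otimes b_-^j$ of several simple tensors, so $f=\sum_j b_+^jb_0^jb_-^j$. (Already in a polynomial example with $x\in A_{[>0]}$, $y\in A_{[<0]}$, the element $xy+y^2$ factors as a two-term sum.) The pure tensor $b_+\otimes c_-$ you produce from $\overline\Delta{}'$ (and then split further with $\overline\Delta{}''$) is not an exact factorization of $f$; it is the image of $f$ in the associated graded $\gr'_\bullet\gr''_\bullet\mathscr O$, i.e.\ the \emph{leading term} of $\sum_j b_+^jb_0^jb_-^j$ with respect to the total orders $\leq'$ and $\leq''$ on $\Gamma_\theta$. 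Those orders depend on the actual $\theta$ and not just on the chamber $C$: the position of $J_\theta(\mu)$ in $\leq'$ involves the real number $\theta(\mu)$, which moves as $\theta$ moves inside $C$. Consequently the weight $\lambda_+$ so extracted is $\theta$-dependent a priori; the claim that it depends only on $C$ \emph{is} the GGMS statement being proved, and the argument is circular at this point. The closing remark about $\overline\Delta_2^{-1}$ being induced by multiplication only identifies the associated graded with a multiplicative model; it neither promotes the leading term to an exact factorization of $f$ nor makes that leading term chamber-intrinsic.

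The paper proves the corollary by the contrapositive. If $-\theta$ lies on a wall of the normal fan of $\underline\Pol(f)$, the upper rim of $\Pol_\theta(f)$ has a nontrivial segment of slope zero; Corollary~\ref{co:SingleMaximal} then yields an ordered monomial $a_i$ in the expansion of $f$ with $\Pol^\wedge(a_i)=\Pol^\wedge_\theta(f)$, so $a_i$ has a semistable factor of slope zero and hence $\mathscr O_{[0]}\neq k$; by section~\ref{ss:StabilityON} some positive root $\alpha$ then satisfies $\theta(\alpha)=0$, i.e.\ $\theta$ lies on a Weyl wall. Corollary~\ref{co:SingleMaximal} (itself a consequence of Proposition~\ref{pr:ConvComp}) is the key input that your proposal never invokes, and any repair of your direct approach---for instance an intermediate-value argument moving $\theta$ within $C$ to find a parameter where two candidate vertices tie---ends up needing precisely this mechanism.
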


\begin{proof}
Let $-\theta$ be an element in a face of the normal fan to
$\underline\Pol(f)$, so that the locus where $\theta$ achieves its
minimum on $\underline\Pol(f)$ is a face of positive dimension.
The upper rim of $\Pol_\theta(f)$ has then a nontrivial segment of
slope $0$. Applying Corollary~\ref{co:SingleMaximal}, we conclude
that $\mathscr O_{[0]}$ is not reduced to the base field $k$.
According to the discussion in section~\ref{ss:StabilityON}, there
exists a positive root $\alpha$ such that $\theta(\alpha)=0$, so
$\theta$ belongs to a face of the Weyl fan. To sum up, each chamber
of the Weyl fan is contained in a chamber of the normal fan to
$\underline\Pol(f)$.
\end{proof}

\begin{proposition}
\label{pr:UpperRim}
Let $\mathbf B$ be a polite basis of $\mathscr O$ and let $\theta\in
Q_{\mathbb R}^*$ be such that the slopes $\theta(\alpha)/\height\alpha$
of the positive roots $\alpha$ are pairwise different. Then the map
$f\mapsto\Pol^\wedge_\theta(f)$ is injective on $\mathbf B$.
\end{proposition}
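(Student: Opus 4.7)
The hypothesis on $\theta$ implies that each subalgebra $\mathfrak n_\mu$ is at most one-dimensional, spanned when nonzero by the root vector $e_{\alpha(\mu)}$ of the unique positive root $\alpha(\mu)$ of slope $\mu$. Hence $U_{[\mu]}=U(\mathfrak n_\mu)$ has one-dimensional weight spaces, and by Proposition~\ref{pr:Duability} so does $\mathscr O_{[\mu;\theta]}$.

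List the slopes of positive roots in decreasing order $\mu_1>\cdots>\mu_N$ and set $\alpha_{(k)}=\alpha(\mu_k)$. The plan is to show that $\mathbf B$ is compatible with the multiplication bijection $m\colon\bigotimes_{k=1}^N\mathscr O_{[\mu_k;\theta]}\to\mathscr O$ of Theorem~\ref{th:Factorization}, in the sense that each $f\in\mathbf B$ corresponds uniquely to a tensor $f_1\otimes\cdots\otimes f_N$ with $f_k\in\mathbf B\cap\mathscr O_{[\mu_k;\theta]}$. Granted this, Lemma~\ref{le:OMPolygon} identifies the upper rim of $\Pol_\theta(f)=\Pol_\theta(f_1\cdots f_N)$ as the polygonal line through $J_\theta(|f_1|+\cdots+|f_j|)$, $j=0,\ldots,N$. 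Successive differences of these vertices recover each $J_\theta(|f_k|)$; the constraint that $|f_k|$ is a non-positive integer multiple of $\alpha_{(k)}$ then determines $|f_k|$, since $J_\theta$ is injective on such multiples; and finally the one-dimensionality of the weight spaces of $\mathscr O_{[\mu_k;\theta]}$ determines $f_k$, hence $f$, from $\Pol^\wedge_\theta(f)$.

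To establish the $m$-compatibility, apply condition~(iii) of Definition~\ref{de:PoliteBasis} with the shifted stability parameter $\theta^{(k)}=\theta-c_k\,\height\in Q_{\mathbb R}^*$, with $c_k\in(\mu_{k+1},\mu_k)$, for each $k\in\{1,\ldots,N-1\}$. Since a positive root of $\theta$-slope $\mu_j$ has $\theta^{(k)}$-slope $\mu_j-c_k$, one has $\mathscr O_{[>0;\theta^{(k)}]}=\mathscr O_{[\geq\mu_k;\theta]}$ and $\mathscr O_{[\leq 0;\theta^{(k)}]}=\mathscr O_{[\leq\mu_{k+1};\theta]}$, so each such application realizes a two-factor splitting of $\mathbf B$ along the cut between $\mu_k$ and $\mu_{k+1}$. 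Combining the $N-1$ cuts then yields the desired $N$-fold splitting.

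The main obstacle is the combination step. A single application of condition~(iii) identifies a leading two-factor structure only modulo strictly lower terms of the filtration ${}^{\theta^{(k)}}F'_\bullet$, so one must check that the successive refinements are mutually consistent. A tidy way is to form the joint filtration obtained from the filtrations ${}^{\theta^{(k)}}F'_\bullet$ for $k=1,\ldots,N-1$, whose associated graded is canonically $\bigotimes_k\mathscr O_{[\mu_k;\theta]}$, and to verify by induction on $k$ that $\mathbf B$ is compatible with the successive pieces of this joint filtration, reusing condition~(iii) at each stage.
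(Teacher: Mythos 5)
Your starting observation is correct and is also the kernel of the paper's own proof: the hypothesis on $\theta$ forces each $\mathfrak n_\mu$ to be at most one-dimensional, hence each graded component of $\mathscr O_{[\mu;\theta]}$ is one-dimensional, and thus an ordered monomial is uniquely determined by the polygonal line $\Pol^\wedge$ it produces. Where you diverge is in what you do with this fact, and there is a genuine gap in your plan.

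The paper combines the genericity of $\theta$ with two ingredients you never invoke: Corollary~\ref{co:SingleMaximal}, which says that among the ordered monomials appearing in the expansion of a polite basis element $f$ there is one, say $a$, with $\Pol^\wedge(a)=\Pol^\wedge_\theta(f)$ and all others have strictly smaller upper rim; and Proposition~\ref{pr:ConvComp}, the compatibility of $\mathbf B$ with the subspaces $\mathscr S(K)$. If $f_1,f_2\in\mathbf B$ have the same $\Pol^\wedge_\theta$, then by the first ingredient they share the same leading ordered monomial $a$ (unique by your genericity observation); cancelling $a$ in $f_1+tf_2$ for the appropriate scalar $t$ leaves a combination of ordered monomials $b_i$ with $\Pol^\wedge(b_i)\subsetneq\Pol^\wedge(a)$; by the second ingredient this combination lies in the span of $\mathbf B\cap\bigcup_i\mathscr S(\underline\Pol(b_i))$, a set containing neither $f_1$ nor $f_2$; hence $f_1+tf_2=0$, so $f_1=f_2$. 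No joint filtration is ever assembled.

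Your route instead aims for a strictly stronger intermediate result: an $N$-fold decomposition of each $f\in\mathbf B$ across $\bigotimes_{k=1}^N\mathscr O_{[\mu_k;\theta]}$, compatible with $\mathbf B$ up to lower-order terms. Granting this, your reduction to one-dimensionality of the $\mathscr O_{[\mu_k;\theta]}$ components and injectivity of $J_\theta$ on the relevant rays is fine. But the step you flag as the main obstacle is exactly where the gap sits. Definition~\ref{de:PoliteBasis} supplies only \emph{pairwise} data: for each stability parameter, compatibility with one filtration and one two-factor isomorphism ($\overline\Delta'$ of Proposition~\ref{pr:Splitting'}, or $\overline\Delta''$ of Proposition~\ref{pr:Splitting''}). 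Passing from $N-1$ such two-factor compatibilities to a single $N$-factor compatible associated graded requires, already for $N=3$, checking that the $\theta^{(1)}$-cut applied to the right factor of the $\theta^{(2)}$-splitting yields the same middle piece as the $\theta^{(2)}$-cut applied to the right factor of the $\theta^{(1)}$-splitting — a commutation of filtrations and of the two induced graded isomorphisms that is not an axiom and that your sketch does not establish. (The paper's own Proposition~\ref{pr:Slicing} handles a \emph{three}-fold splitting, but it is not referenced in the definition of polite basis and its compatibility with $\mathbf B$ would itself need a proof.) In summary, your argument, if completed, would prove something stronger than what is asked, but the combination step is a real missing piece, and it is precisely what the paper's shorter route through Corollary~\ref{co:SingleMaximal} and Proposition~\ref{pr:ConvComp} is designed to avoid.
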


\begin{proof}
Let $f_1$ and $f_2$ be two elements of $\mathbf B$ that have the same
image by the map $\Pol^\wedge_\theta$. By our assumption on $\theta$,
there is a unique ordered monomial $a$ relative to the
$\Gamma_\theta$-grading of $\mathscr O$ such that
$\Pol^\wedge(a)=\Pol^\wedge_\theta(f_1)=\Pol^\wedge_\theta(f_2)$.
By Corollary~\ref{co:SingleMaximal}, $a$ occurs in the expansions of
both $f_1$ and $f_2$ as sums of ordered monomials, and there exists
a scalar $t$ such that $f_1+tf_2$ is a sum $\sum_{i=1}^\ell b_i$ of
ordered monomials, homogeneous with respect to the weight grading on
$\mathscr O$, with $\Pol^\wedge(b_i)\subsetneq\Pol^\wedge(a)$ for all
$i\in\inter1\ell$. Certainly then $f_1+tf_2$ is a linear combination
of elements in
$\bigcup_{i=1}^\ell(\mathbf B\cap\mathscr S(\underline\Pol(b_i)))$, but
as neither $f_1$ nor $f_2$ belongs to this union, we necessarily have
$f_1+tf_2=0$, in particular~$f_1=f_2$.
\end{proof}

\begin{corollary}
\label{co:PolCarac}
Let $\mathbf B$ be a polite basis of $\mathscr O$. Then the map
$f\mapsto\underline\Pol(f)$ is injective on $\mathbf B$.
\end{corollary}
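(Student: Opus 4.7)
The plan is to deduce this directly from Proposition~\ref{pr:UpperRim} by choosing a suitable stability parameter. The observation that makes this possible is that the projection $J_\theta:\lambda\mapsto-(\height\lambda,\theta(\lambda))$ is $\mathbb R$-linear, so for any $\theta\in Q_{\mathbb R}^*$ we have $\Pol_\theta(f)=J_\theta(\underline\Pol(f))$. Consequently, the knowledge of $\underline\Pol(f)$ determines $\Pol_\theta(f)$ for every $\theta$.

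Suppose $f_1,f_2\in\mathbf B$ satisfy $\underline\Pol(f_1)=\underline\Pol(f_2)=K$. First I would observe that $f_1$ and $f_2$ have the same weight: indeed, for any weight vector $f$ of weight $\lambda$, every element $\mu\in\underline L(f)$ satisfies $\mu\in Q_-$ and $\lambda-\mu\in Q_-$, so $\lambda$ is the unique minimum of $\underline L(f)$ (and hence of $\underline\Pol(f)$) in the partial order on $Q$; thus $\lambda$ is read off from $K$.

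Next I would pick $\theta\in Q_{\mathbb R}^*$ with the genericity property required in Proposition~\ref{pr:UpperRim}, namely that the slopes $\theta(\alpha)/\height\alpha$ at the positive roots be pairwise distinct. Such a $\theta$ exists because the locus of non-generic stability parameters is a finite union of hyperplanes in $Q_{\mathbb R}^*$. For this $\theta$, the equality $\underline\Pol(f_1)=\underline\Pol(f_2)$ gives $\Pol_\theta(f_1)=\Pol_\theta(f_2)$, and since $f_1$ and $f_2$ also share the same weight and hence the same chord from $0$ to $J_\theta(\lambda)$, we conclude $\Pol^\wedge_\theta(f_1)=\Pol^\wedge_\theta(f_2)$. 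Proposition~\ref{pr:UpperRim} then yields $f_1=f_2$.

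There is no real obstacle; the only point worth emphasizing is the identification of the weight of $f$ as an intrinsic feature of $\underline\Pol(f)$, which is needed so that the two polytopes $\Pol^\wedge_\theta(f_1)$ and $\Pol^\wedge_\theta(f_2)$ are cut out between the same upper rim and the same baseline.
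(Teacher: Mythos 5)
Your proof is correct and takes exactly the route the paper intends: the corollary is stated without proof as an immediate consequence of Proposition~\ref{pr:UpperRim}, obtained by choosing a generic stability parameter $\theta$ and observing that $\Pol^\wedge_\theta(f)$ is determined by $\underline\Pol(f)$ via the linear map $J_\theta$. Your extra remark that the weight can be recovered from $\underline\Pol(f)$ is sound (it is the vertex minimal for the dominance order), though it is not strictly needed since $J_\theta(\lambda)$ is already the rightmost vertex of $\Pol_\theta(f)$ and so must agree for $f_1$ and $f_2$.
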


The map $K\mapsto\mathscr S(K)$ is a filtration of $\mathscr O$ indexed
by the set of all convex polytopes, endowed with the inclusion order.
We denote the associated graded by $\gr_\bullet\mathscr O$.
By Proposition~\ref{pr:ConvComp}, any polite basis $\mathbf B$ of
$\mathscr O$ is compatible with this filtration and thus allows
to compute this associated graded. Explicitly, we get that
$\gr_K\mathscr O$ is one dimensional if $K$ is of the form
$\underline\Pol(f)$ for $f\in\mathbf B$ and is zero otherwise.

Accordingly, the set $\underline\Pol(\mathbf B)$ is the same for all
polite bases $\mathbf B$. As mentioned in the introduction, the elements
of this set are the Mirković--Vilonen polytopes~\cite{Anderson}.
Kamnitzer proved that among the GGMS polytopes, these are characterized
by the shape of their $2$-faces, which is constrained by the tropical
Plücker relations~\cite{Kamnitzer10}. A more general statement is that
faces of Mirković--Vilonen polytopes are Mirković--Vilonen polytopes
of smaller ranks~\cite{BaumannKamnitzerTingley}.

\begin{other}{Remark}
This last fact can be proved in our set-up in the following way.
As in section~\ref{ss:StabilityON}, the datum of an element
$\theta\in Q_{\mathbb R}^*$ defines subgroups $N_{>0}$, $N_0$ and $N_{<0}$
of $N$, and the space $\mathscr O_{[0]}={}^{N_{<0}}\mathscr O^{N_{>0}}$ is
naturally isomorphic to the algebra of regular functions on $N_0$.
As $N_0$ can be regarded as the maximal unipotent subgroup of a reductive
group with root system $\Phi\cap\ker\theta$, it makes sense to speak of
polite bases of $\mathscr O_{[0]}$. (The only true matter of concern is
the normalization condition (ii), which should be adequately managed.)
And as a matter of fact, one easily shows that if $\mathbf B$ is a
polite basis of $\mathscr O$, then $\mathbf B\cap\mathscr O_{[0]}$ is a
polite basis of $\mathscr O(N_0)$. Now consider such a basis $\mathbf B$
and pick $f\in\mathbf B$. Any face of $\underline\Pol(f)$ can be obtained
as the minimum locus on $\underline\Pol(f)$ of an element
$\theta\in Q_{\mathbb R}^*$. Using this $\theta$ as a stability parameter,
the upper ridge of $\Pol_\theta(f)$ contains a segment of slope $0$,
with endpoints $\alpha$ and $\beta$. Denoting by $\bar f$ the image of
$f$ in $\gr'_\alpha\gr''_\beta\mathscr O$ and adopting the notation of
Proposition~\ref{pr:Slicing}, we can write
$\overline\Delta_2(\bar f)=f_+\otimes f_0\otimes f_-$ with
$f_+$, $f_0$ and $f_-$ in~$\mathbf B$. Then $f_0$ belongs to
$\mathbf B\cap\mathscr O_{[0]}$, and the face of $\underline\Pol(f)$
that we considered is a translate of the polytope $\underline\Pol(f_0)$.
\end{other}

\begin{proposition}
\label{pr:TransitionMat}
Let $\mathbf B_1$ and $\mathbf B_2$ be polite bases of $\mathscr O$.
We index both of them by the Mirković--Vilonen polytopes by means of
the map $\underline\Pol$. Then the transition matrix between $\mathbf B_1$
and $\mathbf B_2$ is upper unitriangular with respect to the inclusion
order of the polytopes.
\end{proposition}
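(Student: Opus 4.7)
The plan is to prove the two assertions separately. The upper triangular shape of the transition matrix is essentially immediate from Proposition~\ref{pr:ConvComp}: if $f\in\mathbf B_1$ has $\underline\Pol(f)=K$, then $f\in\mathscr S(K)$ by definition, and Proposition~\ref{pr:ConvComp} gives that $\mathscr S(K)$ is spanned by $\mathbf B_2\cap\mathscr S(K)$; in view of Corollary~\ref{co:PolCarac}, this intersection is indexed by the MV polytopes $K'\subseteq K$, so the expansion of $f$ in $\mathbf B_2$ involves only basis elements with polytope contained in $K$.

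The unit diagonal is the substantive content. Let $f\in\mathbf B_1$ and $g\in\mathbf B_2$ both have polytope $K$; the goal is $f-g\in\mathscr S_{<K}:=\sum_{K'\subsetneq K}\mathscr S(K')$. I would argue by induction on $\#(K\cap Q)$. For the base cases, $K=\{0\}$ forces $f=g=1$ by connectedness, and $K=[0,-n\alpha_i]$ for a simple root $\alpha_i$ forces $f=g=\zeta_i^n$ by condition~(ii) of polite basis together with Corollary~\ref{co:PolCarac}. For the inductive step, I would pick a vertex $v\neq0$ of $K$ and a parameter $\theta\in Q_{\mathbb R}^*$ lying strictly inside the normal cone of $v$, generic enough that $J_\theta(v)$ is a sharp corner of the upper rim of $\Pol_\theta(K)$. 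By Proposition~\ref{pr:Slicing} and conditions (iii), (iv), one then gets
$$\overline\Delta_2(\bar f)=f_+\otimes 1\otimes f_-,\qquad \overline\Delta_2(\bar g)=g_+\otimes 1\otimes g_-,$$
with $f_\pm\in\mathbf B_1$ and $g_\pm\in\mathbf B_2$; the middle factor is $1$ because the face of $K$ cut out by $\theta$ is $\{v\}$, and Corollary~\ref{co:PolCarac} ensures that $\underline\Pol(f_\pm)=\underline\Pol(g_\pm)=:K_\pm$ depends only on $K$ and $v$ and has strictly fewer lattice points than $K$. Applying the induction hypothesis inside the polite bases $\mathbf B_i\cap\mathscr O_{[>0]}$ and $\mathbf B_i\cap\mathscr O_{[<0]}$ of $\mathscr O(N_{>0})$ and $\mathscr O(N_{<0})$ gives $f_\pm\equiv g_\pm\pmod{\mathscr S_{<K_\pm}}$. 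Using that $K=K_++K_-$ as a Minkowski sum at the vertex $v$, this translates to $f_+f_-\equiv g_+g_-\pmod{\mathscr S_{<K}}$. Lifting back through the multiplication isomorphism inverse to $\overline\Delta_2$ and running the argument at every vertex $v$ of $K$ should yield that $\underline L(f-g)$ misses every vertex of $K$, hence $f-g\in\mathscr S_{<K}$.

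The main obstacle is the book-keeping needed to transfer the inductive congruences $f_\pm\equiv g_\pm\pmod{\mathscr S_{<K_\pm}}$, which hold inside the subalgebras $\mathscr O_{[>0]}$ and $\mathscr O_{[<0]}$, into a vanishing statement for the component of $\Delta(f-g)$ at the bidegree $(v,\lambda-v)$. This requires combining the $\theta$-filtration ${}^\theta F'_\bullet$ with the polytope filtration $\mathscr S(K)$, and invoking the GGMS property (Corollary~\ref{co:GGMS}) to justify the Minkowski identity $K=K_++K_-$ at each vertex; without that identity, products of elements in $\mathscr S(K_\pm)$ need not land in $\mathscr S(K)$. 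A potentially cleaner formulation would be to carry out the entire argument in the one-dimensional quotient $\gr_K\mathscr O$, identifying $\bar f$ and $\bar g$ up to scalar and then using the inductive product decomposition $\bar f=\overline{f_+f_-}$ and $\bar g=\overline{g_+g_-}$, together with the normalization forced by condition~(ii), to see that the scalar is~$1$.
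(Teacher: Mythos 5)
The triangularity half of your argument matches the paper's: both deduce it from Proposition~\ref{pr:ConvComp}. For the unit diagonal, however, you part ways with the paper, which simply observes that polite bases are biperfect (Proposition~\ref{pr:Perfectness}) and invokes the known fact that the transition matrix between two perfect bases has ones on the diagonal (Proposition~2.6 of~\cite{Baumann}); combined with the triangularity this gives the statement. Your attempt to prove the unit diagonal directly by vertex-cutting and induction is an interesting idea, but as written it has genuine gaps. First, the key claim $\underline\Pol(f_+)=\underline\Pol(g_+)$ is asserted via Corollary~\ref{co:PolCarac}, but that corollary is only injectivity of $\underline\Pol$ \emph{within a single} polite basis and says nothing about two elements of different bases; in fact, establishing this equality is essentially the same difficulty you started with, one step down, so the argument is circular unless you supply a separate inductive hypothesis for it. Second, you want to run the induction ``inside the polite bases $\mathbf B_i\cap\mathscr O_{[>0]}$ and $\mathbf B_i\cap\mathscr O_{[<0]}$,'' but $N_{>0}$ and $N_{<0}$ are not maximal unipotent subgroups of reductive subgroups of $G$ (unlike $N_0$, which corresponds to the Levi with root system $\Phi\cap\ker\theta$), so the notion of polite basis of $\mathscr O(N_{>0})$ is not defined in the framework of Section~\ref{se:PolytBases}; the Remark after Corollary~\ref{co:PolCarac} only covers $\mathscr O_{[0]}$.

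A third, subtler problem: even granting $f_\pm\equiv g_\pm$ modulo smaller polytopes, this does not force the $(\mu,\lambda-\mu)$ component of $\Delta(f-g)$ to vanish at a vertex $\mu$ of $K$, because a strictly smaller Mirković--Vilonen polytope $K'\subsetneq K$ may still contain $\mu$ as one of its own vertices (for instance $0$ and $|f|$ are vertices of every $\underline\Pol(g_i)$ appearing in the expansion). So ``$\underline L(f-g)$ misses every vertex of $K$'' cannot be read off vertex-by-vertex from congruences modulo $\mathscr S_{<K_\pm}$. You acknowledge the book-keeping is delicate, but the issue is structural rather than merely book-keeping. The most efficient fix is the paper's: establish perfectness (Proposition~\ref{pr:Perfectness}) and use the standard unitriangularity of transition matrices between perfect bases to get the diagonal; the polytope order then refines the resulting total triangularity.
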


\begin{proof}
The triangularity of the transition matrix is a consequence of the
compatibility of both bases with the filtration $K\mapsto\mathscr S(K)$.
The fastest way to prove that there are only ones on the diagonal is to
observe that polite bases are always perfect (see section
\ref{ss:PerfectBases} for the specific definition we use in this paper)
and to invoke the fact that the transition matrix between two perfect
bases is unitriangular (see for instance Proposition~2.6 in \cite{Baumann}
for the dual version).
\end{proof}

\subsection{The dual semicanonical basis}
\label{ss:SCBasis}
In this section we assume that $G$ is simply laced and we show that the
dual semicanonical basis of $\mathscr O$ is polite. We start with a few
recollections to settle the notation and refer to the literature for
complete explanations \cite{GeissLeclercSchroer05,Lusztig00}.

Let $\Lambda$ be the proprejective algebra on the Dynkin diagram of $G$
over the field of complex numbers. This is the path algebra of a quiver
with relations, with vertex set $I$. We regard the dimension-vector
$\dimvec M$ of a $\Lambda$-module $M$ as an element of $Q_+$, identified
to $\mathbb N^I$ by means of the basis $\{\alpha_i\}_{i\in I}$.
The Harder--Narasimhan polytope of a $\Lambda$-module $M$ is the convex
hull in the space $Q_{\mathbb R}$ of the set of dimension-vectors of all
the submodules of $M$.

We define the weight of a word $\mathbf i=i_1\cdots i_n$ in the
alphabet $I$ as $\alpha_{i_1}+\cdots+\alpha_{i_n}$. To such a word,
we associate the monomial $e_{\mathbf i}=e_{i_1}\cdots e_{i_n}$ in
the Chevalley generators of $\mathfrak n$. If $M$ is a $\Lambda$-module
of dimension-vector $\nu$ and $\mathbf i$ is a word of weight $\nu$,
we denote by $\mathscr F_{\mathbf i,M}$ the set of all flags
$0=N_0\subset N_1\subset\dots\subset N_n=M$ of $\Lambda$-submodules of
$M$ such that $\dimvec(N_j/N_{j-1})=\alpha_{i_j}$ for all $j\in\inter1n$.
With this notation, each $\Lambda$-module $M$ of dimension-vector $\nu$
determines a function $\varphi_M\in\mathscr O_{-\nu}$ such that
$\langle e_{\mathbf i},\varphi_M\rangle$ is the Euler characteristic of
$\mathscr F_{\mathbf i,M}$ for all words $\mathbf i$ of weight $\nu$.

\begin{proposition}
\label{pr:HNPolytope}
The Harder--Narasimhan polytope of a $\Lambda$-module $M$ is equal to
$-\underline\Pol(\varphi_M)$.
\end{proposition}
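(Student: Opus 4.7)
The plan is to directly compare the two convex hulls by identifying their vertex sets. I will show $\underline L(\varphi_M)\subseteq\{-\dimvec N\mid N\subseteq M\}$, which places $\underline\Pol(\varphi_M)$ inside minus the Harder--Narasimhan polytope; then I will show that each vertex of the Harder--Narasimhan polytope, once negated, lies in $\underline L(\varphi_M)$, yielding the reverse inclusion.

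The key identity is that the homogeneous component of $\Delta(\varphi_M)$ along $\mathscr O_{-\nu_1}\otimes\mathscr O_{-\nu_2}$ (with $\nu_1+\nu_2=\dimvec M=\nu$) is detected, through the perfect pairing between $U$ and $\mathscr O$, by the numbers $\langle e_{\mathbf i}e_{\mathbf j},\varphi_M\rangle=\chi(\mathscr F_{\mathbf i\mathbf j,M})$ for all words $\mathbf i$, $\mathbf j$ of weights $\nu_1$ and $\nu_2$. Stratifying $\mathscr F_{\mathbf i\mathbf j,M}$ by the submodule appearing at position $|\mathbf i|$, the fiber over $N\subseteq M$ of dimension-vector $\nu_1$ is $\mathscr F_{\mathbf i,N}\times\mathscr F_{\mathbf j,M/N}$. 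If $M$ admits no submodule of dimension-vector $\nu_1$ at all, then $\mathscr F_{\mathbf i\mathbf j,M}$ is empty for every such pair of words, the pairings all vanish, and the coproduct component is zero. Contrapositively, $\underline L(\varphi_M)\subseteq\{-\dimvec N\mid N\subseteq M\}$, which gives the first inclusion of convex hulls.

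For the reverse inclusion, pick a vertex $\mu$ of the Harder--Narasimhan polytope of $M$. By the result of Baumann--Kamnitzer--Tingley recalled in the introduction, there is a \emph{unique} submodule $N_0\subseteq M$ with $\dimvec N_0=\mu$. For any words $\mathbf i$ and $\mathbf j$ of weights $\mu$ and $\nu-\mu$, the stratification collapses to the single stratum $\mathscr F_{\mathbf i\mathbf j,M}\cong\mathscr F_{\mathbf i,N_0}\times\mathscr F_{\mathbf j,M/N_0}$, giving $\langle e_{\mathbf i}e_{\mathbf j},\varphi_M\rangle=\langle e_{\mathbf i},\varphi_{N_0}\rangle\langle e_{\mathbf j},\varphi_{M/N_0}\rangle$. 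Reading this through the duality identifies the $(-\mu,-\nu+\mu)$-component of $\Delta(\varphi_M)$ with the elementary tensor $\varphi_{N_0}\otimes\varphi_{M/N_0}$, which is nonzero provided both factors are nonzero. Hence $-\mu\in\underline L(\varphi_M)$, and since a convex polytope is the convex hull of its vertices, the reverse inclusion follows.

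The main obstacle is the non-vanishing statement $\varphi_{N'}\neq 0$ for every nonzero $\Lambda$-module $N'$. This is standard but needs to be pinned down: for a word $\mathbf k$ encoding a composition series of $N'$, the quiver flag variety $\mathscr F_{\mathbf k,N'}$ admits an affine paving (inherited from Lusztig's stratification), so $\chi(\mathscr F_{\mathbf k,N'})$ is a positive integer and $\langle e_{\mathbf k},\varphi_{N'}\rangle\neq 0$; failing that, an elementary induction on $\dim N'$ via a simple submodule will do the job. Once this is granted, the two inclusions above combine to give the proposition.
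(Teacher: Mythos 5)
Your argument is essentially the paper's own proof. Both directions proceed identically: the inclusion $\underline L(\varphi_M)\subseteq\{-\dimvec N\mid N\subseteq M\}$ via the flag-variety fibration over submodules of a prescribed dimension-vector, and the reverse inclusion by taking a vertex $\mu$ of the Harder--Narasimhan polytope, invoking the uniqueness of the submodule $N$ of dimension-vector $\mu$ from Baumann--Kamnitzer--Tingley to collapse the fibration to a single stratum, and reading off $\varphi_N\otimes\varphi_{M/N}$ as the corresponding component of $\Delta(\varphi_M)$.

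The one place where you and the paper diverge is the non-vanishing of $\varphi_N\otimes\varphi_{M/N}$. The paper disposes of this by citing Lemma~9.5 of Gei\ss--Leclerc--Schr\"oer, \emph{Kac--Moody groups and cluster algebras}. You correctly isolate it as the key remaining fact, but neither of your proposed workarounds is quite solid as stated. The affine-paving claim --- that $\mathscr F_{\mathbf k,N'}$ admits an affine paving whenever $\mathbf k$ encodes a composition series --- is not something one can simply read off from Lusztig's stratification of $\rep(\Lambda,\nu)$; there is no general affine-paving theorem for these quiver flag varieties. The ``elementary induction on $\dim N'$ via a simple submodule'' also stalls: if $\mathbf k=i\mathbf k'$ with $S_i\hookrightarrow N'$, the variety $\mathscr F_{\mathbf k,N'}$ fibres over the \emph{whole} projective space of simple submodules of isotype $i$, not just over the one you chose, and the Euler characteristics of the fibres $\mathscr F_{\mathbf k',N'/S'}$ vary with $S'$, so a non-vanishing contribution from one fibre can in principle cancel against the others. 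So this step does need either the citation or a genuine argument; as it stands you have rediscovered exactly the gap that the reference fills.

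Aside from that, the proof is correct and follows the same route as the paper.
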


\begin{proof}
We set $\nu=\dimvec M$ and pick $\mu\in(-\underline L(\varphi_M))$. The
component of weight $(-\mu,-\nu+\mu)$ of $\Delta(\varphi_M)$ is nonzero,
so there exist words $\mathbf i$ and $\mathbf j$ of weights $\mu$ and
$\nu-\mu$, respectively, such that
$\langle e_{\mathbf i}\otimes e_{\mathbf j},\Delta(\varphi_M)\rangle\neq0$.
Denoting by $\mathbf{ij}$ the concatenation of these words, we get
$\langle e_{\mathbf{ij}},\varphi_M\rangle\neq0$, so
$\mathscr F_{\mathbf{ij},M}\neq\varnothing$ and $M$
has a submodule of dimension-vector $\mu$. We conclude that
$-\underline L(\varphi_M)$, and therefore its convex hull
$-\underline\Pol(\varphi_M)$, is contained in the Harder--Narasimhan
polytope of $M$.

To prove the reverse inclusion, we choose a vertex $\mu$ of the
Harder--Narasimhan polytope of $M$. Then $M$ contains a unique submodule
$N$ of dimension-vector $\mu$ (\cite{BaumannKamnitzerTingley},
section~3.2). If $\mathbf i$ and $\mathbf j$ are words of weights
$\mu$ and $\nu-\mu$, respectively, then any flag in
$\mathscr F_{\mathbf{ij},M}$ contains $N$, whence an isomorphism
$\mathscr F_{\mathbf{ij},M}\cong\mathscr F_{\mathbf i,N}\times
\mathscr F_{\mathbf j,M/N}$. Taking Euler characteristics, we get
$\langle e_{\mathbf{ij}},\varphi_M\rangle=\langle e_{\mathbf i},
\varphi_N\rangle\langle e_{\mathbf j},\varphi_{M/N}\rangle$. Since
this holds for all such words $\mathbf i$ and $\mathbf j$, we deduce
that the component of weight $(-\mu,-\nu+\mu)$ of $\Delta(\varphi_M)$
is equal to $\varphi_N\otimes\varphi_{M/N}$, hence is not zero
(\cite{GeissLeclercSchroer11}, Lemma~9.5). Therefore
$-\mu\in\underline L(\varphi_M)$. We conclude that all the vertices of the
Harder--Narasimhan polytope of $M$ belong to $-\underline\Pol(\varphi_M)$.
\end{proof}

Given $\nu\in Q_+$, we denote the representation space of
$\Lambda$-modules (aka Lusztig's nilpotent variety) in vector-dimension
$\nu$ by $\rep(\Lambda,\nu)$. If $M$ is the $\Lambda$-module defined by
the point $m\in\rep(\Lambda,\nu)$, we allow ourselves to write $\varphi_m$
instead of $\varphi_M$. The map $m\mapsto\varphi_m$ from $\rep(\Lambda,\nu)$
to $\mathscr O_{-\nu}$ is constructible. To any irreducible component
$Y$ of $\rep(\Lambda,\nu)$, one can therefore associate a function
$\rho_Y\in\mathscr O_{-\nu}$ so that $\{m\in Y\mid\varphi_m=\rho_Y\}$
contains a dense open subset of $Y$. The dual semicanonical basis of
$\mathscr O$ is the family of all these functions $\rho_Y$.

Each $\theta\in Q_{\mathbb R}^*$ defines a torsion pair
$(\mathscr I_\theta,\overline{\mathscr P}_\theta)$ in $\Lambda$-mod
(\cite{BaumannKamnitzerTingley}, section~3.1). For each $\Lambda$-module
$M$, denote by $M_t$ and $M_f=M/M_t$ the torsion and locally-free part
of $M$ with respect to this torsion pair. The map $M\mapsto(M_t,M_f)$
extends to irreducible components: given a dimension-vector $\nu$ and an
irreducible component $Y$ of $\rep(\Lambda,\nu)$, there exists a
dimension-vector $\mu$ and irreducible components $Y_t$ and $Y_f$ of
$\rep(\Lambda,\mu)$ and $\rep(\Lambda,\nu-\mu)$, respectively, such that
the general point of $Y_t\times Y_f$ is of the form $(M_t,M_f)$ with $M$
a general point in $Y$. Moreover $\mu$, $Y_t$ and $Y_f$ are uniquely
defined and the map $Y\mapsto(Y_t,Y_f)$ is injective (\textit{loc.~cit.},
Theorem~4.4). From Proposition~\ref{pr:HNPolytope} and its proof, we
deduce that $-J_\theta(\mu)$ is the largest element in $L_\theta(\rho_Y)$
with respect to the total order $\leq'$ and that the component of weight
$(-\mu,-\nu+\mu)$ of $\Delta(\rho_Y)$ is~$\rho_{Y_t}\otimes\rho_{Y_f}$.

\begin{proposition}
\label{pr:SCPolite}
The dual semicanonical basis of $\mathscr O$ is polite.
\end{proposition}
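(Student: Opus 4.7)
The plan is to verify the four conditions of Definition~\ref{de:PoliteBasis} in turn. Condition (i) is immediate: $\rho_Y\in\mathscr O_{-\nu}$ whenever $Y$ is an irreducible component of $\rep(\Lambda,\nu)$, so each $\rho_Y$ is a weight vector. For condition (ii), the variety $\rep(\Lambda,n\alpha_i)$ consists of the single point $S_i^n$, so the unique dual semicanonical basis element of weight $-n\alpha_i$ is $\varphi_{S_i^n}$. The only word of weight $n\alpha_i$ is $\mathbf i=i\cdots i$ and $\mathscr F_{\mathbf i,S_i^n}$ is the variety of full flags in $\mathbb C^n$, whose Euler characteristic is $n!$; the primitivity of $e_i$ in $U$ gives $\langle e_i^n,\zeta_i^n\rangle=n!$ as well, so $\varphi_{S_i^n}=\zeta_i^n$.

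Condition (iii) is the substantive part; the remark following Definition~\ref{de:PoliteBasis} reduces (iv) to the analogous statement for the filtration ${}^\theta F''_\bullet$. Fix $\theta\in Q^*_{\mathbb R}$ and set $\gamma_Y=-J_\theta(\mu_Y)$. The three inputs I take from the discussion preceding the proposition are: first, $\gamma_Y$ is the $\leq'$-maximum of $L_\theta(\rho_Y)$, so $\rho_Y\in{}^\theta F'_{\gamma_Y}\mathscr O$; second, the weight $(-\mu_Y,-\nu+\mu_Y)$ component of $\Delta(\rho_Y)$ equals $\rho_{Y_t}\otimes\rho_{Y_f}$, so $\overline\Delta{}'$ sends the class of $\rho_Y$ in $\gr'_{\gamma_Y}\mathscr O$ to $\rho_{Y_t}\otimes\rho_{Y_f}$; and third, the assignment $Y\mapsto(Y_t,Y_f)$ is a bijection between irreducible components of $\rep(\Lambda,\nu)$ (over all $\nu$) and pairs of irreducible components whose general points lie in $\mathscr I_\theta$ and in $\overline{\mathscr P}_\theta$ respectively (Theorem~4.4 of \cite{BaumannKamnitzerTingley}).

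Filtration compatibility follows from a triangularity argument. Suppose $f=\sum_Y c_Y\rho_Y$ lies in ${}^\theta F'_\gamma\mathscr O$ in some fixed weight but some $c_{Y^\ast}\neq 0$ with $\gamma_{Y^\ast}\not\leq'\gamma$; pick such a $Y^\ast$ with $\gamma_{Y^\ast}$ $\leq'$-maximal. Extracting the component of $\Delta(f)$ at weight $(-\mu_{Y^\ast},-\nu+\mu_{Y^\ast})$ kills the contributions from $Y$ with $\gamma_Y>'\gamma_{Y^\ast}$ (by maximality) and from $Y$ with $\gamma_Y<'\gamma_{Y^\ast}$ (because $L_\theta(\rho_Y)\subset\gamma_Y+\Pi'$ then excludes $\gamma_{Y^\ast}$), leaving the sum $\sum_{\gamma_Y=\gamma_{Y^\ast}}c_Y\,\rho_{Y_t}\otimes\rho_{Y_f}$. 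This sum must vanish because $\gamma_{Y^\ast}\notin L_\theta(f)$. The injectivity of $Y\mapsto(Y_t,Y_f)$ makes the surviving elementary tensors linearly independent in $\mathscr O\otimes\mathscr O$, so every $c_Y$ with $\gamma_Y=\gamma_{Y^\ast}$ vanishes, a contradiction. Hence $\mathbf B\cap{}^\theta F'_\gamma\mathscr O$ spans ${}^\theta F'_\gamma\mathscr O$.

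Applied at $\gamma=0$ this yields a basis of $\mathscr O_{[\leq 0]}={}^\theta F'_0\mathscr O$; the symmetric argument on ${}^\theta F''_\bullet$ produces a basis of $\mathscr O_{[\geq 0]}$, and combining the two gives a basis of $\mathscr O_{[>0]}$ indexed by the components $Y\subset\mathscr I_\theta$. By the surjectivity half of Theorem~4.4 of \cite{BaumannKamnitzerTingley}, the family $\{\rho_{Y_t}\otimes\rho_{Y_f}\}_Y$ coincides with the tensor product basis of $\mathscr O_{[>0]}\otimes\mathscr O_{[\leq 0]}$, and the second bullet above recognizes it as the image under $\overline\Delta{}'$ of the induced basis on $\gr'_\bullet\mathscr O$. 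This completes condition (iii). The main obstacle is marshalling both halves of BKT's Theorem~4.4---injectivity underlies the triangularity argument, while surjectivity is what guarantees that every tensor product basis vector in the target of $\overline\Delta{}'$ is actually hit; once these two facts are invoked, the remainder is weight-by-weight bookkeeping.
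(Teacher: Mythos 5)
Your argument essentially reproduces the paper's proof: conditions (i) and (ii) are settled by the same direct computation on the module $nS_i$, and conditions (iii) and (iv) are handled by extracting the top $\Gamma_\theta$-degree component of $\Delta(f)$, using the torsion-pair cutting $Y\mapsto(Y_t,Y_f)$ from Theorem~4.4 of \cite{BaumannKamnitzerTingley} and its injectivity to see that the surviving sum $\sum c_Y\,\rho_{Y_t}\otimes\rho_{Y_f}$ cannot vanish. The only differences are cosmetic: you run the triangularity step as a contradiction where the paper argues directly that the $\leq'$-maximum $\beta$ of the $-J_\theta(\mu_i)$ must already satisfy $\beta\leq'\alpha$, and you could streamline the passage to $\overline\Delta{}'$-compatibility by observing that $\mathscr O_{[>0]}=\bigoplus_{\gamma}\bigl({}^\theta F'_\gamma\mathscr O\bigr)^\gamma$ inherits its basis directly from the $F'_\bullet$-compatibility, without needing to invoke the second filtration at that point.
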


\begin{proof}
By construction, the dual semicanonical basis satisfies the
condition~(i) in the definition of polite bases. For condition~(ii),
we choose $(i,n)\in I\times\mathbb N$, denote the unique $\Lambda$-module
of dimension-vector $n\alpha_i$ by $nS_i$, observe that
$\langle e_{ii\cdots i},\varphi_{nS_i}\rangle$ is equal to $n!$,
the Euler characteristic of the complete flag variety of an
$n$-dimensional complex vector space, and conclude that
$\zeta_i^n$ is equal to $\varphi_{nS_i}$, so belongs to the dual
semicanonical basis.

We now show that the dual semicanonical basis satisfies condition~(iii).
Let $\theta\in Q_{\mathbb R}^*$, let $(\alpha,\gamma)\in\Gamma_\theta^2$,
and let $f$ be a homogenous element in ${}^\theta F'_\alpha\mathscr O$
of degree $\gamma$. Let us expand $f$ on the dual semicanonical basis
and write $f=\sum_{i=1}^\ell c_i\rho_{Y_i}$, where each scalar
$c_i$ is nonzero and where $Y_i$ is an irreducible component of a
representation space $\rep(\Lambda,\nu_i)$. We decompose each $Y_i$
according to the torsion pair
$(\mathscr I_\theta,\overline{\mathscr P}_\theta)$, producing a
dimension-vector $\mu_i$ and irreducible components $(Y_i)_t$ and
$(Y_i)_f$ of $\rep(\Lambda,\mu_i)$ and $\rep(\Lambda,\nu_i-\mu_i)$,
respectively. Let $\beta$ be the largest element among the degrees
$-J_\theta(\mu_i)$ with respect to the total order $\leq'$ and let
$I$ be the set of all indices $i$ such that $-J_\theta(\mu_i)=\beta$.
Then the homogeneous component of degree
$(\beta,\gamma-\beta)$ of $\Delta(f)$ is
$\sum_{i\in I}c_i\,\rho_{(Y_i)_t}\otimes\rho_{(Y_i)_f}$. Since the
pairs $((Y_i)_t,(Y_i)_f)$ are all different, this sum cannot be zero,
so $\beta\in L(f)$ and therefore $\beta\leq'\alpha$. It follows that
for each $i\in\inter1\ell$, the set $L_\theta(\rho_{Y_i})$ is
contained in $\alpha+\Pi'$ and the function $\rho_{Y_i}$
belongs to ${}^\theta F'_\alpha\mathscr O$. We conclude that
${}^\theta F'_\alpha\mathscr O$ is spanned by its intersection with
the dual semicanonical~basis. The same arguments imply the
compatibility of the dual semicanonical basis with the isomorphism
$\overline\Delta'$.

Condition~(iv) in the definition of polite bases can be checked
similarly, using instead the torsion pair
$(\overline{\mathscr I}_\theta,\mathscr P_\theta)$ in $\Lambda$-mod.
\end{proof}

\subsection{Comparison with perfect bases}
\label{ss:PerfectBases}
Berenstein and Kazhdan introduced in \cite{BerensteinKazhdan} the notion
of perfect bases for locally finite $U(\mathfrak n)$-modules. The
following is a reformulation of their definition in the case of the
$U(\mathfrak n)$-module $\mathscr O$, actually a strengthened version
of it where the values of some structure constants is prescribed.
For $i\in I$ and $f\in\mathscr O$, let $\ell_i(f)$ denote the smallest
nonnegative integer $n$ such that $e_i^{n+1}\cdot f=0$. Then a basis
$\mathbf B$ of $\mathscr O$ consisting of weight vectors is said to be
perfect if, for each $(i,n)\in I\times\mathbb N$, the rule
$b\mapsto e_i^{(n)}\cdot b$ defines an injective map
$\{b\in\mathbf B\mid\ell_i(b)=n\}\to\{b\in\mathbf B\mid e_i\cdot b=0\}$.

We will say that a basis $\mathbf B$ of $\mathscr O$ is biperfect if
both $\mathbf B$ and its image by the involution $*$ are perfect.

\begin{proposition}
\label{pr:Perfectness}
A polite basis of $\mathscr O$ is biperfect.
\end{proposition}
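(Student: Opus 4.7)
The plan is to apply conditions~(iii) and~(iv) of Definition~\ref{de:PoliteBasis} with stability parameters $\theta$ that isolate a single root vector $e_i$ in either $\mathfrak n_{<0}$ or $\mathfrak n_{>0}$. To show that $\mathbf B$ itself is perfect, I fix $i\in I$ and choose $\theta\in Q_{\mathbb R}^*$ with $\theta(\alpha_i)<0$ and with $\theta(\alpha_j)$ taken very large and positive for every $j\neq i$. Since every positive root $\alpha\neq\alpha_i$ contains some $\alpha_j$ ($j\neq i$) in its support, its slope is then strictly positive, so $\mathfrak n_{<0}=\mathbb C\,e_i$ and $N_{<0}=\exp(\mathbb C\,e_i)$. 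The explicit description in section~\ref{ss:StabilityON} gives $\mathscr O_{[<0]}=k[\zeta_i]$, while $\mathscr O_{[\geq0]}={}^{N_{<0}}\mathscr O$ coincides with the kernel of the left action of $e_i$ on $\mathscr O$.

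The computation underlying the argument is the following. Because $\mathscr O_{-n\alpha_i}$ is one-dimensional, spanned by $\zeta_i^n$ with $\langle e_i^{(n)},\zeta_i^n\rangle=1$, the $\mathscr O\otimes\mathscr O_{-n\alpha_i}$-component of $\Delta(f)$ equals $\bigl(e_i^{(n)}\cdot f\bigr)\otimes\zeta_i^n$. Our choice of $\theta$ arranges that the points $J_\theta(-n\alpha_i)$ are the only elements of $R_\theta(f)$ with non-positive $d$-coordinate, so the $\leq''$-minimum of $R_\theta(f)$ is attained at $J_\theta(-\ell_i(f)\alpha_i)$, and Proposition~\ref{pr:Splitting''} delivers
$$\overline\Delta{}''(\bar f)=\bigl(e_i^{(\ell_i(f))}\cdot f\bigr)\otimes\zeta_i^{\ell_i(f)}.$$
For $b\in\mathbf B$, condition~(ii) puts $\zeta_i^{\ell_i(b)}$ into $\mathbf B$, and condition~(iv) then forces the left factor $e_i^{(\ell_i(b))}\cdot b$ to belong to $\mathbf B\cap\mathscr O_{[\geq0]}=\mathbf B\cap\ker(e_i\cdot)$. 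Injectivity of $b\mapsto e_i^{(n)}\cdot b$ on $\{b\in\mathbf B:\ell_i(b)=n\}$ follows because all such $b$ share the same $\gr''$-degree $\beta=J_\theta(-n\alpha_i)$, their images $\bar b\in\gr''_\beta\mathscr O$ are distinct basis elements (by compatibility with $F''_\bullet$), and $\overline\Delta{}''$ is an isomorphism.

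To prove the perfectness of $\mathbf B^*$, a direct computation with the involution yields the identity $e_i^{(n)}\cdot f^*=(f\cdot e_i^{(n)})^*$, which reduces the task to the right-action analogue for $\mathbf B$: the map $b\mapsto b\cdot e_i^{(n)}$ should be injective from $\{b\in\mathbf B\mid\max\{m:b\cdot e_i^{(m)}\neq0\}=n\}$ into $\{b\in\mathbf B\mid b\cdot e_i=0\}$. The argument is symmetric to the one above: choose $\theta$ with $\theta(\alpha_i)>0$ and $\theta(\alpha_j)$ very negative for $j\neq i$, obtaining $\mathfrak n_{>0}=\mathbb C\,e_i$, $\mathscr O_{[>0]}=k[\zeta_i]$ and $\mathscr O_{[\leq0]}=\ker(\cdot e_i)$, then invoke condition~(iii) together with Proposition~\ref{pr:Splitting'} in place of~(iv) and Proposition~\ref{pr:Splitting''}. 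The delicate point in both halves of the proof is the geometric verification that pure powers of $\zeta_i$ alone realize the $\leq''$-extremal (respectively $\leq'$-extremal) position in $R_\theta(f)$ (respectively $L_\theta(f)$); this is guaranteed by taking $|\theta(\alpha_j)|$ much larger than $|\theta(\alpha_i)|$ for every $j\neq i$.
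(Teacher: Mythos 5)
Your proof is correct and follows essentially the same route as the paper: isolate $e_i$ via a $\theta$ with $\theta(\alpha_i)<0$ and $\theta$ positive on the other positive roots, exploit $\mathscr O_{[<0]}=k[\zeta_i]$ to get $\overline\Delta{}''(\bar f)=\bigl(e_i^{(\ell_i(f))}\cdot f\bigr)\otimes\zeta_i^{\ell_i(f)}$ and then invoke conditions (ii) and (iv), with the mirror argument (via $F'_\bullet$, $\overline\Delta{}'$ and the involution $*$) for $\mathbf B^*$. Two small points where your write-up diverges from the paper without affecting correctness: the relevant $\gr''$-degree of $\bar f$ is the $\leq''$-\emph{maximum} of $R_\theta(f)$, not the minimum; and the paper identifies this degree as $\ell_i(f)\gamma$ by a clean PBW-monomial argument that needs only $\theta(\alpha_j)>0$ for $j\neq i$, so the normalization $|\theta(\alpha_j)|\gg|\theta(\alpha_i)|$ (which, as stated, would have to depend on the weight under consideration) is not required.
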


\begin{proof}
We fix $i\in I$ and choose $\theta\in Q_{\mathbb R}^*$ so that
$\theta(\alpha_i)<0$ and that $\theta$ takes positive values on all
the other positive roots. With this stability parameter, the group
$N_{<0}$ defined in section~\ref{ss:StabilityON} is the additive
group $\mathbb G_a$ defined by the root $\alpha_i$ and the algebra
$\mathscr O_{[<0]}$ is the polynomial algebra~$k[\zeta_i]$. Also,
$\mathscr O_{[\geq0]}={}^{N_{<0}}\mathscr O=\{f\in\mathscr O\mid
e_i\cdot f=0\}$.

We set $\gamma=J_\theta(-\alpha_i)$; this is the degree of $\zeta_i$
relative to the $\Gamma_\theta$-grading on $\mathscr O$. From
Proposition~\ref{pr:Splitting''}, we deduce that $\gr''_\bullet\mathscr O$
is concentrated in degrees $n\gamma$ with $n\in\mathbb N$.

If a homogeneous element $f\in\mathscr O$ is in $F''_{n\gamma}\mathscr O$,
then $(n+1)\gamma\notin R(f)$ and therefore $e_i^{n+1}\cdot f=0$.
Conversely, if $f$ is not in $F''_{n\gamma}\mathscr O$, then there exists
$\beta\in R(f)$ such that $\beta\notin n\gamma+\Pi''$. In other words,
there exists an element $x\in U(\mathfrak n)$ of degree $\beta$ (for the
$\Gamma_\theta$-grading of $U(\mathfrak n)$) such that $x\cdot f\neq0$,
and without loss of generality we can assume that $x$ is a PBW monomial
ending with some power of $e_i$. The fact that $\beta\notin n\gamma+\Pi''$
forces this power to be at least $e_i^{n+1}$, and we conclude that
$e_i^{n+1}\cdot f\neq0$.

The previous paragraph shows that for a homogeneous element $f\in\mathscr O$,
the smallest nonnegative integer $n$ such that $f\in F''_{n\gamma}\mathscr O$
is $\ell_i(f)$. For this integer $n=\ell_i(f)$, denoting by $\bar f$ the
image of $f$ in $\gr''_{n\gamma}\mathscr O$, we then have
$\overline\Delta''\bigl(\bar f\bigr)=e_i^{(n)}\cdot f\otimes\varphi_i^n$,
because $e_i^{(n)}$ spans the homogeneous component of degree $n\gamma$
in $U(\mathfrak n)$ and $\varphi_i^n$ is the dual basis element in
$\mathscr O$.

Now let $\mathbf B$ be a polite basis of $\mathscr O$. For $n\in\mathbb N$,
set $\mathbf B_n=\{b\in\mathbf B\mid\ell_i(b)=n\}$. In view of our previous
observations, the axioms of a polite basis require that $\mathbf B_0$
is a basis of $F''_0\mathscr O=\mathscr O_{[\geq0]}$, that
$\bigl\{\bar b\bigm|b\in\mathbf B_n\bigr\}$ is a basis of
$\gr''_{n\gamma}\mathscr O$, and that $\bar b\mapsto e_i^{(n)}\cdot b$
maps bijectively this basis onto~$\mathbf B_0$. Therefore $\mathbf B$
is a perfect basis of~$\mathscr O$.

Employing $-\theta$ as stability parameter and arguing in similar fashion,
we deduce from the compatibility of $\mathbf B$ with the filtration
$F'_\bullet$ and with the isomorphism $\overline\Delta'$ that $\mathbf B$
enjoys the same property with respect to the right action of $e_i$ on
$\mathscr O$. Therefore $\{b^*\mid b\in\mathbf B\}$ is also perfect.
\end{proof}

Perfect bases are automatically endowed with a structure of crystal in
the sense of Kashiwara. We will not recall the definition of this notion;
let us just say that we will use the standard notation, with crystal
operators denoted by $\tilde e_i$ and $\tilde f_i$ and with functions
$\varepsilon_i$ and $\varphi_i$ for each $i\in I$. One important result
from \cite{BerensteinKazhdan} is that the crystals of the perfect bases
of $\mathscr O$ are all isomorphic. The abstract crystal they share is
denoted by $B(\infty)$.

Proposition~\ref{pr:Perfectness} therefore entails that any polite basis
$\mathbf B$ is indexed by the crystal $B(\infty)$. Also, as we saw in
section~\ref{ss:PoliteBases}, the map $\underline\Pol$ defines a bijection
from $\mathbf B$ onto the set of Mirković--Vilonen polytopes.
We thus obtain a bijection from $B(\infty)$ onto the set of
Mirković--Vilonen polytopes, and this bijection does not depend on
$\mathbf B$ (\cite{Baumann}, Lemma~2.1). It follows that the set of
Mirković--Vilonen polytopes has a natural crystal structure, isomorphic
to $B(\infty)$. This result goes back to Kamnitzer, who described
this bijection in the most explicit manner~\cite{Kamnitzer07}.

\begin{other}{Remark}
The converse of Proposition~\ref{pr:Perfectness} is not true: biperfect
bases are not necessarily polite. Suppose indeed given a pair $(b',b'')$
of elements in $B(\infty)$ which have the same weight and satisfy
$\varepsilon_i(b')<\varepsilon_i(b'')$ and
$\varepsilon_i^*(b')<\varepsilon_i^*(b'')$ for each $i\in I$,
but also $\underline\Pol(b')\not\subset\underline\Pol(b'')$.
By Proposition~\ref{pr:TransitionMat}, the transition matrix between any
two polite bases of $\mathscr O$ have a zero entry at position $(b'',b')$.
However, Fei explains in sect.~10.1 of \cite{Fei} that one can find two
biperfect bases of $\mathscr O(N)$ such that the transition matrix between
these bases has a nonzero entry at that position. In this case certainly
one of these biperfect bases is not polite.

The main point now is to show that such pairs $(b',b'')$ do exist.
The following is Example~2.7~(ii) in \cite{Baumann} (a smaller example
is given in \cite{Fei}, Example~10.7). Here we are in type $A_4$ with
the standard numbering of the vertices of the Dynkin diagram.
We consider the two elements of~$B(\infty)$
$$b'=\Bigl(\tilde f_3\tilde f_2\tilde f_1\tilde f_4\tilde f_3
\tilde f_2\Bigr)^4(1)\quad\text{and}\quad
b''=\Bigl(\tilde f_1^2\tilde f_3^5\tilde f_2^5\tilde f_4^2\Bigr)
\Bigl(\tilde f_2\tilde f_3\tilde f_4\tilde f_1\tilde f_2\tilde f_3\Bigr)
\Bigl(\tilde f_4\tilde f_3\tilde f_2\tilde f_1\Bigr)(1).$$
They have the same weight $\nu$, and in the preprojective model
\cite{KashiwaraSaito,Lusztig90}, they correspond to the irreducible
components of $\rep(\Lambda,\nu)$ whose general points are the
$\Lambda$-modules
$$M'=\biggl(\begin{smallmatrix}&&3&\\&2&&4\\1&&3&\\&2&&
\end{smallmatrix}\biggr)^{\oplus4}\quad\text{and}\quad
M''=\begin{smallmatrix}&&&4\\&&3&\\&2&&\\1&&&\end{smallmatrix}\oplus
\begin{smallmatrix}&2&&\\1&&3&\\&2&&4\\&&3&\end{smallmatrix}\oplus
\Bigl(\begin{smallmatrix}1&&3&\\&2&&4\end{smallmatrix}\Bigr)^{\oplus2}
\oplus\Bigl(\begin{smallmatrix}&3\\2&\end{smallmatrix}\Bigr)^{\oplus3},$$
respectively. Looking at the heads and the socles of these modules, we
find that
\begin{xalignat*}2
(\varepsilon_1(b'),\varepsilon_2(b'),\varepsilon_3(b'),
\varepsilon_4(b')&=(0,0,4,0),&
(\varepsilon_1(b''),\varepsilon_2(b''),\varepsilon_3(b''),
\varepsilon_4(b'')&=(2,1,5,1),\\[3pt]
(\varepsilon_1^*(b'),\varepsilon_2^*(b'),\varepsilon_3^*(b'),
\varepsilon_4^*(b'))&=(0,4,0,0),&
(\varepsilon_1^*(b''),\varepsilon_2^*(b''),\varepsilon_3^*(b''),
\varepsilon_4^*(b''))&=(1,5,1,2).
\end{xalignat*}
On the other hand, adopting the notation of \cite{BaumannKamnitzer}, we get
$$N(s_2s_4s_3\,\omega_3)=\begin{smallmatrix}1&&3\\&2&\end{smallmatrix}$$
and we can then compute
$$\dim\Hom_\Lambda(N(s_2s_4s_3\,\omega_3),M')=4>2=
\dim\Hom_\Lambda(N(s_2s_4s_3\,\omega_3),M'').$$
Using Theorem~6.3 in \textit{loc.\ cit.} we conclude that
$\underline\Pol(b')\not\subset\underline\Pol(b'')$.

Computer experiments lead us to believe that in this type $A_4$, the algebra
$\mathscr O$ has several biperfect bases but only one polite basis.
\end{other}

\section{Mirković and Vilonen's basis}
\label{se:MVBasis}

\subsection{Recollection about the Geometric Satake Equivalence}
\label{ss:GeometricSatake}
In this section, we present a very brief summary of the Geometric
Satake Equivalence. We direct the reader to~\cite{MirkovicVilonen}
for additional details.

We carry on with the notation set up in section~\ref{ss:Notation} and
denote the Borel subgroup opposite to $B$ with respect to $T$ by $B_-$.
Since the group $T$ is the quotient of $B_-$ by its unipotent radical,
any character $\lambda$ of $T$ can be inflated to a linear character of
$B_-$, still denoted by $\lambda$. One can then consider the coinduced
$G$-module (customarily called costandard)
$$\nabla(\lambda)=\bigl\{f\in\mathscr O(G)\bigm|
\forall(b,g)\in B_-\times G,\;f(bg)=\lambda(b)f(g)\bigr\}.$$
If $\lambda$ is dominant, then $\nabla(\lambda)$ has $\lambda$ for
highest weight and its character is given by Weyl's formula.
The Geometric Satake Equivalence realizes $\nabla(\lambda)$ as the
homology of a certain perverse sheaf $\mathcal I_*(\lambda,k)$ on the
affine Grassmannian of the Langlands dual of $G$.

Let us set up the relevant notation. The Langlands dual of $G$ will be
denoted by $G^\vee$ (for us, $G^\vee$ will be a complex algebraic group).
Its maximal torus $T^\vee$ has $P$ for cocharacter lattice and the root
system of $G^\vee$ is the coroot system of $G$. We choose an additive
one-parameter subgroup $x_{\alpha^\vee}:\mathbb G_a\to G^\vee$
for each coroot $\alpha^\vee$. We denote by $N^\vee_\pm$ the subgroups
of $G^\vee$ generated by the subgroups $x_{\alpha^\vee}$ for
$\alpha\in\Phi_\pm$. We introduce the ring $\mathbb O=\mathbb C[\![z]\!]$
and its fraction field $\mathbb K=\mathbb C(\!(z)\!)$.

The affine Grassmannian of $G^\vee$ is the homogeneous space
$\Gr=G^\vee(\mathbb K)/G^\vee(\mathbb O)$. It is the set of
$\mathbb C$-points of a reduced projective ind-scheme over $\mathbb C$.
Each weight $\lambda\in P$ can be regarded as a point $z^\lambda$ in
$T^\vee(\mathbb K)$, hence defines a point $L_\lambda$ in $\Gr$. The orbit
through $L_\lambda$ under the action of $N^\vee_+(\mathbb K)$ (respectively,
$N^\vee_-(\mathbb K)$) is denoted by $S_\lambda$ (respectively,
$T_\lambda$). Then
$$\Gr=\bigsqcup_{\lambda\in P}S_\lambda=\bigsqcup_{\lambda\in P}T_\lambda,
\qquad\overline{S_\lambda}=\bigsqcup_{\mu\in Q_+}S_{\lambda-\mu},
\qquad\overline{T_\lambda}=\bigsqcup_{\mu\in Q_+}T_{\lambda+\mu},$$
and for each $\mu\in P$, the action of $z^\mu$ on $\Gr$ sends $S_\lambda$
onto $S_{\lambda+\mu}$ and $T_\lambda$ onto $T_{\lambda+\mu}$.

Let $\lambda$ be a dominant weight. We denote the orbit through $L_\lambda$
under the action of $G^\vee(\mathbb O)$ by $\Gr^\lambda$ and consider the
perverse sheaf
$$\mathcal I_*(\lambda,k)={}^p\tau_{\leq0}\;(j_\lambda)_*\;
\underline k_{\Gr^\lambda}[2\rho(\lambda)]$$
in the derived category $D^b(\Gr,k)$ of constructible sheaves on
$\Gr$. Here $2\rho:P\to\mathbb Z$ is the sum of the positive coroots,
$j_\lambda:\Gr^\lambda\to\Gr$ is the inclusion map, $(j_\lambda)_*$
is the (derived) direct image, and ${}^p\tau_{\leq0}$ is the
truncation functor for the perverse $t$-structure. Then, under the
Geometric Satake Equivalence, the module $\nabla(\lambda)$ is the
hypercohomology $\text H^\bullet(\Gr,\mathcal I_*(\lambda,k))$,
and for all $\nu\in P$, the weight space $\nabla(\lambda)_\nu$ is
$\text H^{2\rho(\nu)}(T_\nu,t_\nu^!\,\mathcal I_*(\lambda,k))$, where
$t_\nu:T_\nu\to\Gr$ is the inclusion map. Rewriting the sheaf as
$$\mathcal I_*(\lambda,k)={}^p\tau_{\leq0}\;(j_\lambda)_*\;
\mathbb D_{\Gr^\lambda}[-2\rho(\lambda)],$$
we get (see \cite{MirkovicVilonen}, Proposition~3.10)
$$\nabla(\lambda)_\nu\cong\text H^{2\rho(\nu-\lambda)}
\bigl(\Gr^\lambda\cap T_\nu,\mathbb D_{\Gr^\lambda\cap T_\nu}\bigr)
\cong\text H_{2\rho(\lambda-\nu)}^\BM\bigl(\Gr^\lambda\cap T_\nu,k\bigr).$$
Furthermore, all the irreducible components of $\Gr^\lambda\cap T_\nu$
have dimension $\rho(\lambda-\nu)$, so their fundamental classes form
a basis of $\nabla(\lambda)_\nu$.

Let $q:P\to\mathbb Q$ be a positive definite quadratic form on $P$,
invariant under the Weyl group. It determines a central extension
$\widetilde{\mathfrak g^\vee}$ of the Lie algebra
$\mathfrak g^\vee\otimes\mathbb C[z,z^{-1}]$ by~$\mathbb C$.
The basic representation\footnote{If $G$ is simple and if one chooses
$q$ such that $q(\alpha)=1$ for each short root $\alpha$, then
$\widetilde{\mathfrak g^\vee}$ is an affine untwisted Kac--Moody
algebra---up to the derivation $d/dz$---and the basic representation
is the integrable representation with highest weight $\Lambda_0$.}
$V$ of $\widetilde{\mathfrak g^\vee}$ provides an embedding
$\Upsilon:\Gr\to\mathbb P(V)$ and a $G^\vee(\mathbb K)$-equivariant
line bundle $\mathscr L=\Upsilon^*\mathscr O(1)$ on $\Gr$. For each
dominant weight $\lambda$, the cup-product with $c_1(\mathscr L)$
defines an endomorphism of the vector space
$\text H^\bullet(\Gr,\mathcal I_*(\lambda,k))$. Then the Geometric Satake
Equivalence, suitably normalized, identifies this endomorphism with
the action on $\nabla(\lambda)$ of the principal nilpotent element
$\sum_{i\in I}q(\alpha_i)e_i$.

\subsection{Cutting Mirković--Vilonen cycles}
\label{ss:CuttingMVCycles}
For $(\lambda,\nu)\in P^2$, it is known that
$S_\lambda\cap T_\lambda=\{L_\lambda\}$, that the intersection
$S_\lambda\cap T_\nu$ is non-empty if and only if $\lambda-\nu\in Q_+$,
and that in this case this intersection has pure dimension
$\rho(\lambda-\nu)$. The irreducible components of the closure
$\overline{S_\lambda\cap T_\nu}$ are called Mirković--Vilonen cycles
of weight $(\lambda,\nu)$.  We shall denote the set they form by
$\mathscr Z_{\lambda,\nu}$.

We now fix a regular element $\theta\in Q_{\mathbb R}^*$ and define
$$\Phi_\theta=\bigl\{\alpha\in\Phi\bigm|\theta(\alpha)<0\bigr\}.$$
We define $\mathbb M=z^{-1}\mathbb C[z^{-1}]$, a subspace of
$\mathbb K$. For $\mathbb A\in\{\mathbb K,\mathbb O,\mathbb M\}$,
we denote by $U^\vee(\mathbb A)$ (respectively,
$U^\vee_\pm(\mathbb A)$) the subgroup of $G^\vee(\mathbb K)$
generated by the elements $x_{\alpha^\vee}(a)$ with $a\in\mathbb A$
and $\alpha\in\Phi_\theta$ (respectively,
$\alpha\in{}\Phi_\theta\cap\Phi_\pm$).

\begin{lemma}
\label{le:ZappaSzep}
\begin{enumerate}
\item
For $\mathbb A\in\{\mathbb K,\mathbb O,\mathbb M\}$, we have
$U^\vee(\mathbb A)=U^\vee_+(\mathbb A)
\bowtie U^\vee_-(\mathbb A)$.
\item
We have the following decompositions:
$$U^\vee(\mathbb K)=U^\vee(\mathbb M)
\bowtie U^\vee(\mathbb O)\quad\text{and}\quad
U^\vee_\pm(\mathbb K)=U^\vee_\pm(\mathbb M)
\bowtie U^\vee_\pm(\mathbb O).$$
\end{enumerate}
\end{lemma}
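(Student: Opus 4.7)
The plan is to rely on the structure theorem for the unipotent subgroup of a Chevalley group attached to a closed set of roots: for any closed subset $\Psi\subset\Phi$ and any ordering of $\Psi$, the multiplication map $\prod_{\alpha\in\Psi}x_{\alpha^\vee}(\mathbb A)\to U^\vee_\Psi(\mathbb A)$ is a bijection; moreover the Chevalley commutator formula expresses each $[x_{\alpha^\vee}(a),x_{\beta^\vee}(b)]$ as a product of $x_{(p\alpha+q\beta)^\vee}(C_{pq}\,a^pb^q)$ with integer coefficients and $p,q\geq 1$, so any subset of $\mathbb K$ closed under multiplication (in particular $\mathbb O$ and $\mathbb M$) produces a genuine subgroup in this way.

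For (i), first observe that $\Phi_\theta$ is closed in $\Phi$: if $\theta(\alpha)<0$ and $\theta(\beta)<0$ then $\theta(\alpha+\beta)<0$. The subsets $\Phi_\theta\cap\Phi_\pm$, being intersections of closed subsets, are also closed. Fix an ordering of $\Phi_\theta$ that places the roots of $\Phi_\theta\cap\Phi_+$ before those of $\Phi_\theta\cap\Phi_-$. Then any $u\in U^\vee(\mathbb A)$ is an ordered product $\prod_{\alpha\in\Phi_\theta}x_{\alpha^\vee}(a_\alpha)$, and splitting this product at the partition point factors $u$ uniquely as $u_+u_-$ with $u_\pm\in U^\vee_\pm(\mathbb A)$; the injectivity needed for the Zappa--Szép decomposition reduces to $U^\vee_+(\mathbb A)\cap U^\vee_-(\mathbb A)=\{1\}$, which holds because an element of this intersection must have vanishing ordered-product coordinates on both halves of $\Phi_\theta$.

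For the first statement of (ii), uniqueness reduces in the same way to $U^\vee(\mathbb M)\cap U^\vee(\mathbb O)=\{1\}$: the ordered-product coordinates of an element of $U^\vee(\mathbb A)$ lie in $\mathbb A$, and $\mathbb M\cap\mathbb O=\{0\}$. For surjectivity, I would order $\Phi_\theta=\{\alpha_1,\ldots,\alpha_n\}$ by nondecreasing height, and given $u=\prod_{i=1}^n x_{\alpha_i^\vee}(a_i)$ with $a_i\in\mathbb K$, look for $b_i\in\mathbb M$ and $c_i\in\mathbb O$ such that $u=\bigl(\prod_i x_{\alpha_i^\vee}(b_i)\bigr)\bigl(\prod_i x_{\alpha_i^\vee}(c_i)\bigr)$. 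Reducing the right-hand side to ordered-product form via Chevalley commutations yields $\prod_i x_{\alpha_i^\vee}(d_i)$ with $d_i=b_i+c_i+Q_i(b_1,\ldots,b_{i-1},c_1,\ldots,c_{i-1})$, where $Q_i$ is an integer polynomial: only earlier-indexed parameters appear because each commutator $[x_{\alpha_j^\vee}(\cdot),x_{\alpha_k^\vee}(\cdot)]$ lands in root subgroups attached to roots of strictly larger height than $\alpha_j$ and $\alpha_k$. Solving $d_i=a_i$ by induction on $i$ then reduces at each step to a single one-dimensional splitting $\mathbb K=\mathbb M\oplus\mathbb O$. The second statement of (ii) follows by applying the same argument with $\Phi_\theta$ replaced by its closed subset $\Phi_\theta\cap\Phi_\pm$.

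The main obstacle is the bookkeeping in (ii): verifying that the correction polynomials $Q_i$ involve only $b_j,c_k$ with $j,k<i$, so that the equations form a strictly triangular cascade solvable by iterating the one-dimensional splitting $\mathbb K=\mathbb M\oplus\mathbb O$. This triangularity is ultimately a consequence of the strict height increase exhibited by the Chevalley commutator formula together with the choice of a height-respecting ordering of $\Phi_\theta$.
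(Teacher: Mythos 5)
Your proof of part (i) is essentially the paper's: both rely on the unique ordered product representation for the closed set $\Phi_\theta$ and split the product at the $\Phi_\theta\cap\Phi_+\,/\,\Phi_\theta\cap\Phi_-$ divide.

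Your proof of part (ii), however, has a genuine gap. You order $\Phi_\theta$ by nondecreasing height and assert that every commutator $[x_{\alpha_j^\vee}(\cdot),x_{\alpha_k^\vee}(\cdot)]$ lands in roots of strictly larger height, so that the system $d_i=b_i+c_i+Q_i(\ldots)$ is triangular. The assertion is true when $\Phi_\theta\subset\Phi_+$ (all heights positive, so $\height(p\alpha+q\beta)\geq\height\alpha+\height\beta>\max$), but $\theta$ is an arbitrary stability parameter and $\Phi_\theta=\{\alpha\in\Phi\mid\theta(\alpha)<0\}$ typically contains negative roots as well. When the heights have mixed sign the claim fails: in type $A_2$, take $\theta$ with $\theta(\alpha_1)<0<\theta(\alpha_2)$ and $\theta(\alpha_1+\alpha_2)<0$, so $\Phi_\theta=\{-\alpha_2,\alpha_1,\alpha_1+\alpha_2\}$ with heights $-1,1,2$. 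The commutator of the height-$(-1)$ and height-$2$ root subgroups lands in the root $\alpha_1$ of intermediate height $1$; the correction polynomial $Q_i$ at $\alpha_1$ then involves parameters indexed by $\alpha_1+\alpha_2$, a \emph{later} index, and the cascade is no longer triangular. The step ``solving $d_i=a_i$ by induction'' therefore breaks down.

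The fix would be to order $\Phi_\theta$ not by $\height$ but by the linear functional $\theta$ itself (nonincreasing $\theta$-value): since $\theta(\alpha),\theta(\beta)<0$ on $\Phi_\theta$, one has $\theta(p\alpha+q\beta)=p\theta(\alpha)+q\theta(\beta)<\min(\theta(\alpha),\theta(\beta))$ for $p,q\geq1$, so the commutators genuinely land at strictly later indices and your triangular-substitution scheme goes through. Equivalently, use the height function associated to the positive system $\Phi_\theta$ rather than the fixed $\Phi_+$ of $G$. The paper avoids this issue altogether by choosing a \emph{convex} order on $\Phi_\theta$, where commutators land strictly \emph{between} their two inputs; it then does not solve a global triangular system but instead peels off one root at a time from the outside, maintaining a factorization $U^\vee(\mathbb M)\text{-part}\cdot(\text{middle in the remaining roots})\cdot U^\vee(\mathbb O)\text{-part}$ by induction on the number of roots peeled. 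Both routes work once the ordering is chosen correctly; yours requires the $\theta$-ordering, the paper's a convex ordering.
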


\begin{proof}
For all coroots $\alpha^\vee$ and $\beta^\vee$ that are not opposite,
there exist commutation relations of the form
$$x_{\alpha^\vee}(a)\,x_{\beta^\vee}(b)\,
x_{\alpha^\vee}(a)^{-1}\,x_{\beta^\vee}(b)^{-1}=
\prod_{i,j>0}x_{i\alpha^\vee+j\beta^\vee}\bigl(C_{i,j}a^ib^j\bigr)$$
with $C_{i,j}\in\mathbb C$; see for instance \cite{Carter}, chapter~5.
These relations allow to write any element in $U^\vee(\mathbb K)$
in a unique way as a product
\begin{equation*}
\tag{$*$}
\prod_{\alpha\in\Phi_\theta}x_{\alpha^\vee}(a_\alpha)
\end{equation*}
computed according to any given convex order on $\Phi_\theta$.
Choosing a convex order for which the elements in
$\Phi_\theta\cap\Phi_+$ are smaller than the elements in
$\Phi_\theta\cap\Phi_-$, this fact gives the decomposition
$U^\vee(\mathbb A)=U^\vee_+(\mathbb A)\bowtie U^\vee_-(\mathbb A)$ for
$\mathbb A=\mathbb K$. Noting that, in the commutation relation, the
monomials $C_{i,j}a^ib^j$ belong to $\mathbb O$ (respectively,
$\mathbb M$) as soon as $a$ and $b$ do so, we can use the same process
to obtain the decomposition for $\mathbb A=\mathbb O$ (respectively,
$\mathbb A=\mathbb M$). Along the way, we note that for
$\mathbb A\in\{\mathbb O,\mathbb M\}$, the element ($*$) belongs to
$U^\vee(\mathbb A)$ if and only if all the $a_\alpha$ belong
to $\mathbb A$.

We have so far proved the first item. To prove the second, we enumerate
the roots $\alpha_1$, \dots, $\alpha_n$ in $\Phi_\theta$ according to a
convex order. To simplify the notation, we write $x_i$ instead of
$x_{\alpha_i^\vee}$. Let $g\in U^\vee(\mathbb K)$. We claim that for
any $k\in\inter0n$, $g$ can be written as a product
$$x_1(b_1)\cdots x_k(b_k)\,x_{k+1}(a_{k+1})\cdots x_n(a_n)\,x_k(c_k)
\cdots x_1(c_1),$$
with $a_i$ in $\mathbb K$, $b_i\in\mathbb M$ and $c_i\in\mathbb O$.
For $k=0$, this is ($*$). Let us assume that such a factorization exists
for $k\in\inter0{n-1}$ and let us decompose $a_{k+1}=b_{k+1}+c_{k+1}$
with $(b_{k+1},c_{k+1})\in\mathbb M\times\mathbb O$. As the ordering is
convex, each coroot of the form $i\alpha_{k+1}^\vee+j\alpha_\ell^\vee$
with $i$ and $j$ positive and $\ell\geq k+2$ belongs to
$\bigl\{\alpha^\vee_{k+2},\ldots,\alpha^\vee_n\bigr\}$, so there
exists $a'_{k+2}$, \dots, $a'_n$ in $\mathbb K$ such that
$$x_{k+1}(c_{k+1})\,x_{k+2}(a_{k+2})\cdots x_n(a_n)\,
x_{k+1}(c_{k+1})^{-1}=x_{k+2}(a'_{k+2})\cdots x_n(a'_n).$$
We then obtain the desired writing
$$g=x_1(b_1)\cdots x_{k+1}(b_{k+1})\,x_{k+2}(a'_{k+2})\cdots x_n(a'_n)
\,x_{k+1}(c_{k+1})\cdots x_1(c_1)$$
for $k+1$, which establishes our claim by induction.
For $k=n$, we get a factorization $g=g'g''$ with $(g',g'')\in
U^\vee(\mathbb M)\times U^\vee(\mathbb O)$.
Moreover, the uniqueness of the writing ($*$) implies that
$U^\vee(\mathbb M)\cap U^\vee(\mathbb O)=\{1\}$.
We conclude that $U^\vee(\mathbb K)=U^\vee(\mathbb M)\bowtie
U^\vee(\mathbb O)$. Similar arguments prove the remaining decompositions.
\end{proof}

For $\mu\in P$, we denote by $R_\mu$ the orbit through $L_\mu$ under the
action of the group $U^\vee(\mathbb K)$. The stabilizer of $L_0$ under
this action is $U^\vee(\mathbb O)$, so by Lemma~\ref{le:ZappaSzep} the map
$g\mapsto g\cdot L_0$ is a bijection from $U^\vee(\mathbb M)$ onto $R_0$.

\begin{lemma}
\label{le:SmallOrbs}
For each $\mu\in P$, the intersection $S_\mu\cap R_\mu$ (respectively,
$T_\mu\cap R_\mu$) is the orbit through $L_\mu$ under the action of
the group $U^\vee_+(\mathbb K)$ (respectively,
$U^\vee_-(\mathbb K)$).
\end{lemma}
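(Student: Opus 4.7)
The inclusion $U^\vee_+(\mathbb K)\cdot L_\mu\subset S_\mu\cap R_\mu$ is immediate: by definition $U^\vee_+(\mathbb K)$ is contained in both $N^\vee_+(\mathbb K)$ (generator by generator, since $\Phi_\theta\cap\Phi_+\subset\Phi_+$) and in $U^\vee(\mathbb K)$. The content of the lemma is the reverse inclusion; and everything will rest on the already-recalled fact that $S_\mu\cap T_\mu=\{L_\mu\}$.

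Let me explain the plan for the $S_\mu$ case. Take $x\in S_\mu\cap R_\mu$ and write $x=g\cdot L_\mu$ with $g\in U^\vee(\mathbb K)$. Applying Lemma~\ref{le:ZappaSzep}~(i), I factor $g=g_+g_-$ uniquely with $g_\pm\in U^\vee_\pm(\mathbb K)$, and the plan reduces to showing $g_-\cdot L_\mu=L_\mu$. Since $g_-\in U^\vee_-(\mathbb K)\subset N^\vee_-(\mathbb K)$, the point $g_-\cdot L_\mu$ lies in $T_\mu$. On the other hand, as $x\in S_\mu$ there exists $n\in N^\vee_+(\mathbb K)$ with $x=n\cdot L_\mu$, and rearranging gives
$$g_-\cdot L_\mu=g_+^{-1}n\cdot L_\mu.$$
The element $g_+^{-1}n$ belongs to $N^\vee_+(\mathbb K)$ (because $g_+\in U^\vee_+(\mathbb K)\subset N^\vee_+(\mathbb K)$), so $g_-\cdot L_\mu\in S_\mu$ as well. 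Consequently $g_-\cdot L_\mu\in S_\mu\cap T_\mu=\{L_\mu\}$, which gives $x=g_+\cdot L_\mu\in U^\vee_+(\mathbb K)\cdot L_\mu$.

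The $T_\mu\cap R_\mu$ case is treated by a mirror argument, where one instead needs the opposite-order Zappa--Szép decomposition $U^\vee(\mathbb K)=U^\vee_-(\mathbb K)\bowtie U^\vee_+(\mathbb K)$. This is obtained by rerunning the proof of Lemma~\ref{le:ZappaSzep}~(i) with a convex order on $\Phi_\theta$ in which the roots of $\Phi_\theta\cap\Phi_-$ come first. Writing $g=g_-g_+$ in this new factorization and repeating the above reasoning (exchanging the roles of $S$ and $T$, and of $+$ and $-$) again yields $g_+\cdot L_\mu\in S_\mu\cap T_\mu=\{L_\mu\}$, whence $x=g_-\cdot L_\mu\in U^\vee_-(\mathbb K)\cdot L_\mu$.

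There is no real obstacle here; the geometric input has already been isolated (the identity $S_\mu\cap T_\mu=\{L_\mu\}$), and the algebraic input is precisely the Zappa--Szép decomposition proved in Lemma~\ref{le:ZappaSzep}. The only small care needed is to invoke both orderings of that decomposition—one per case—and to keep track that $U^\vee_\pm(\mathbb K)$ sits inside $N^\vee_\pm(\mathbb K)$ so that orbits stay in the correct $S$- or $T$-strata.
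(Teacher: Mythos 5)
Your proof is correct and follows the paper's argument exactly: factor $g\in U^\vee(\mathbb K)$ via the Zappa--Szép decomposition of Lemma~\ref{le:ZappaSzep}, then squeeze the intermediate point into $S_\mu\cap T_\mu=\{L_\mu\}$. The only cosmetic difference is in the $T$-case: you propose rerunning the convex-order construction to get $U^\vee(\mathbb K)=U^\vee_-(\mathbb K)\bowtie U^\vee_+(\mathbb K)$, but this opposite-order decomposition is a formal consequence of the first one (apply the inverse map), so no new argument is needed there.
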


\begin{proof}
The inclusion $U^\vee_+(\mathbb K)\cdot L_\mu\subset
S_\mu\cap R_\mu$ is banal. For the reverse direction, we choose an
element $x\in S_\mu\cap R_\mu$ and write $x=g\cdot L_\mu$ with
$g\in U^\vee(\mathbb K)$. Using Lemma~\ref{le:ZappaSzep},
we decompose $g=g_+g_-$ with $g_\pm\in U^\vee_\pm(\mathbb K)$.
Since $x\in S_\mu$, we get $g_+^{-1}\cdot x\in S_\mu$. As
$S_\mu\cap T_\mu=\{L_\mu\}$, we obtain $g_-\cdot L_\mu=L_\mu$, and
therefore $x=g_+\cdot L_\mu$ belongs to $U^\vee_+(\mathbb K)\cdot L_\mu$.
\end{proof}

We define a map
$$\Omega_\mu:R_\mu\to(T_\mu\cap R_\mu)\times(S_\mu\cap R_\mu)$$
as follows. Given $x\in R_\mu$, there is a unique
$g\in U^\vee(\mathbb M)$ such that $z^{-\mu}\cdot x=g\cdot L_0$.
By Lemma~\ref{le:ZappaSzep}, we can write in a unique fashion
$g=g_+g_-=h_-h_+$ with $g_+$, $h_+$ in $U^\vee_+(\mathbb M)$ and
$g_-$, $h_-$ in $U^\vee_-(\mathbb M)$. We then set
$\Omega_\mu(x)=(z^\mu g_-\cdot L_0,z^\mu h_+\cdot L_0)$.

\begin{proposition}
\label{pr:IsomSTR}
Let $(\lambda,\mu,\nu)\in P^3$. Then the map $\Omega_\mu$ is bijective
and restricts to an isomorphism of algebraic varieties from
$S_\lambda\cap T_\nu\cap R_\mu$ onto
$(S_\lambda\cap T_\mu\cap R_\mu)\times(T_\nu\cap S_\mu\cap R_\mu)$.
\end{proposition}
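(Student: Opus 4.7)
The plan is to reduce the statement to a group-theoretic identity in $U^\vee(\mathbb{M})$, then translate it back to the affine Grassmannian.

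First I would identify $R_\mu$ with $U^\vee(\mathbb{M})$ via the bijection $g \leftrightarrow z^\mu g \cdot L_0$, well defined because the stabilizer of $L_0$ in $U^\vee(\mathbb{K})$ equals $U^\vee(\mathbb{O})$ and $U^\vee(\mathbb{K}) = U^\vee(\mathbb{M}) \bowtie U^\vee(\mathbb{O})$ by Lemma~\ref{le:ZappaSzep}. Combining Lemma~\ref{le:SmallOrbs} with the decomposition $U^\vee_\pm(\mathbb{K}) = U^\vee_\pm(\mathbb{M}) \bowtie U^\vee_\pm(\mathbb{O})$ then shows that $T_\mu \cap R_\mu$ corresponds to $U^\vee_-(\mathbb{M})$ and $S_\mu \cap R_\mu$ to $U^\vee_+(\mathbb{M})$. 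Under these identifications, $\Omega_\mu$ translates into the map $\Phi : U^\vee(\mathbb{M}) \to U^\vee_-(\mathbb{M}) \times U^\vee_+(\mathbb{M})$, $g \mapsto (g_-, h_+)$.

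Next I would prove that $\Phi$ is a bijective morphism with regular inverse. Given $(g_-, h_+)$, set $w = g_- h_+^{-1}$, decompose $w = w_+ w_-$ via Lemma~\ref{le:ZappaSzep}, and define $g = w_+^{-1} g_-$. A direct check shows $g g_-^{-1} = w_+^{-1} \in U^\vee_+(\mathbb{M})$ and $g h_+^{-1} = w_+^{-1} g_- h_+^{-1} = w_+^{-1} w = w_- \in U^\vee_-(\mathbb{M})$, so these are precisely the two Zappa--Szép decompositions of $g$ and therefore $\Phi(g) = (g_-, h_+)$. Both $\Phi$ and this inverse being compositions of multiplication, inversion, and Zappa--Szép projections, they are regular maps; this establishes bijectivity of $\Omega_\mu$ and its character as an isomorphism of varieties.

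For the restriction statement I would exploit the two rewritings
$$x = \bigl(z^\mu g_+ z^{-\mu}\bigr) \cdot \bigl(z^\mu g_- \cdot L_0\bigr) = \bigl(z^\mu h_- z^{-\mu}\bigr) \cdot \bigl(z^\mu h_+ \cdot L_0\bigr).$$
Since $z^\mu g_+ z^{-\mu} \in U^\vee_+(\mathbb{K}) \subset N^\vee_+(\mathbb{K})$ and $S_\lambda$ is stable under $N^\vee_+(\mathbb{K})$, we have $x \in S_\lambda$ if and only if $z^\mu g_- \cdot L_0 \in S_\lambda$; symmetrically, $z^\mu h_- z^{-\mu} \in N^\vee_-(\mathbb{K})$ preserves $T_\nu$, so $x \in T_\nu$ if and only if $z^\mu h_+ \cdot L_0 \in T_\nu$. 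Combined with the identifications of the first paragraph, this yields the claimed isomorphism from $S_\lambda \cap T_\nu \cap R_\mu$ onto $(S_\lambda \cap T_\mu \cap R_\mu) \times (T_\nu \cap S_\mu \cap R_\mu)$. The hardest step will be finding the explicit algebraic inverse in the second paragraph: the formula $w_+^{-1} g_-$ is not obvious a priori, and the key insight is that the two Zappa--Szép decompositions of $g$ are tied together by the single auxiliary element $w = g_- h_+^{-1}$. Once this is in hand, the rest of the argument is essentially a direct application of the $N^\vee_\pm(\mathbb{K})$-invariance of the semi-infinite orbits $S_\lambda$ and $T_\nu$.
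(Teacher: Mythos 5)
Your proposal is correct and follows essentially the same route as the paper: reduce to a group-theoretic problem for $U^\vee(\mathbb M)$, construct the explicit inverse by decomposing $g_-h_+^{-1}$ (your $w=w_+w_-$ is exactly the paper's $g_+^{-1}h_-$, so $g=w_+^{-1}g_-$ coincides with the paper's $x=g_+g_-\cdot L_0$), and use $N^\vee_\pm(\mathbb K)$-invariance of $S_\lambda$ and $T_\nu$ for the restriction statement.
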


\begin{proof}
We readily reduce to the case $\mu=0$.

Given $(y,z)\in(T_0\cap R_0)\times(S_0\cap R_0)$, we write
$y=g_-\cdot L_0$ and $z=h_+\cdot L_0$ with $g_-$ in
$U^\vee_-(\mathbb M)$ and $h_+$ in $U^\vee_+(\mathbb M)$, and we write
the element $g_-h_+^{-1}$ of $U^\vee(\mathbb M)$ as $g_+^{-1}h_-$ with
$(g_+^{-1},h_-)\in U^\vee_+(\mathbb M)\times U^\vee_-(\mathbb M)$.
We then have $g_+g_-=h_-h_+$, and we check that $x=g_+g_-\cdot L_0$ is
the unique element in $R_0$ such that $(y,z)=\Omega_0(x)$. The map
$\Omega_0$ is therefore bijective.

Suppose that $x\in R_0$ and set $(y,z)=\Omega_0(x)$. Then $x$ and $y$
belong to the same orbit under the action of $N^\vee_+(\mathbb K)$,
so $x\in S_\lambda$ if and only if $y\in S_\lambda$. Likewise,
$x\in T_\nu$ if and only if $z\in T_\nu$. We conclude that
$\Omega_0$ induces a bijection from $S_\lambda\cap T_\nu\cap R_0$ onto
$(S_\lambda\cap T_0\cap R_0)\times(T_\nu\cap S_0\cap R_0)$.
This bijection is in fact an isomorphism of algebraic varieties because
the definition of $\Omega_0$ and the construction of its inverse depend
only on the operations that provide the decomposition
$U^\vee(\mathbb M)=U^\vee_+(\mathbb M)
\bowtie U^\vee_-(\mathbb M)$.
\end{proof}

The subsets $R_\mu$ are locally closed and form a partition of $\Gr$,
so for any irreducible subvariety $Z$ of $\Gr$, there is a unique
weight $\mu\in P$ such that $Z\cap R_\mu$ is open and dense in $Z$.
For $(\lambda,\mu,\nu)\in P^3$, we denote by
$\mathscr Z_{\lambda,\nu}^\mu$ the subset of
$\mathscr Z_{\lambda,\nu}$ formed by the cycles $Z$
whose general point belongs to $R_\mu$. Therefore
$\mathscr Z_{\lambda,\nu}^\mu$ is the set of irreducible
components of $\overline{S_\lambda\cap T_\nu\cap R_\mu}$ of
dimension $\rho(\lambda-\nu)$.

\begin{proposition}
\label{pr:CuttingMVCycles}
The map $Z\mapsto\bigl(\overline{Z\cap T_\mu\cap R_\mu},
\overline{Z\cap S_\mu\cap R_\mu}\bigr)$ defines a bijection
$$\Xi:\mathscr Z_{\lambda,\nu}^\mu\xrightarrow\simeq
\mathscr Z_{\lambda,\mu}^\mu\times\mathscr Z_{\mu,\nu}^\mu.$$
\end{proposition}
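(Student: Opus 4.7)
The natural plan is to derive the bijection from the isomorphism of algebraic varieties
$$\Omega_\mu \colon S_\lambda \cap T_\nu \cap R_\mu \xrightarrow{\simeq} (S_\lambda \cap T_\mu \cap R_\mu) \times (T_\nu \cap S_\mu \cap R_\mu)$$
provided by Proposition~\ref{pr:IsomSTR}. Since the irreducible components of a product of algebraic varieties are the products of components of each factor, and since dimensions are additive, $\Omega_\mu$ induces a bijection at the level of irreducible components.

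By the remark immediately preceding the proposition, $\mathscr{Z}^\mu_{\lambda,\nu}$ is the set of top-dimensional (dimension $\rho(\lambda-\nu)$) irreducible components of $S_\lambda \cap T_\nu \cap R_\mu$, up to closure. The linear identity $\rho(\lambda-\nu) = \rho(\lambda-\mu) + \rho(\mu-\nu)$, together with the purity of dimension of $S_\alpha \cap T_\beta$ recalled in section~\ref{ss:GeometricSatake}, ensures that a top-dimensional irreducible component of the product corresponds to a pair of top-dimensional components in each factor. Writing $\Omega_\mu = (\pi_1, \pi_2)$, this yields an abstract bijection $\mathscr{Z}^\mu_{\lambda,\nu} \xrightarrow{\simeq} \mathscr{Z}^\mu_{\lambda,\mu} \times \mathscr{Z}^\mu_{\mu,\nu}$ that sends a cycle $Z$ to the pair $\bigl(\overline{\pi_1(Z \cap R_\mu)}, \overline{\pi_2(Z \cap R_\mu)}\bigr)$, where the closures are taken in $\overline{S_\lambda \cap T_\mu}$ and $\overline{S_\mu \cap T_\nu}$ respectively.

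The remaining and main obstacle is to verify that this bijection coincides with the map $\Xi$ given in the statement, i.e., that the two closures just written equal $\overline{Z \cap T_\mu \cap R_\mu}$ and $\overline{Z \cap S_\mu \cap R_\mu}$ respectively. Unwinding the definition of $\Omega_\mu$ in the proof of Proposition~\ref{pr:IsomSTR}, one checks that a point $x \in R_\mu$ lies in $T_\mu \cap R_\mu$ precisely when $\pi_2(x) = L_\mu$, and symmetrically for $\pi_1$; so $Z \cap T_\mu \cap R_\mu = \pi_2^{-1}(L_\mu) \cap (Z \cap R_\mu)$ set-theoretically. To match this with $\overline{\pi_1(Z \cap R_\mu)}$, one natural route is to realize $\pi_1$ as a limit under a one-parameter flow on $\Gr$: if the flow respects the $\Omega_\mu$-decomposition, acting trivially on the $T_\mu \cap R_\mu$-factor and contracting $S_\mu \cap R_\mu$ onto $\{L_\mu\}$, then the closedness of $Z$ in $\Gr$ forces the limits of points of $Z \cap R_\mu$ to stay in $Z$ and to land in $T_\mu \cap R_\mu$, yielding $\pi_1(Z \cap R_\mu) \subseteq Z \cap T_\mu \cap R_\mu$; equality of closures then follows from the top-dimensionality of $\overline{\pi_1(Z \cap R_\mu)}$. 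An alternative would be a direct analysis based on the $U^\vee(\mathbb{M})$-decompositions of Lemma~\ref{le:ZappaSzep}. The argument for $\pi_2$ is symmetric, via the opposite direction of the flow.
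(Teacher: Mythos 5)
Your overall plan matches the paper's: build an abstract bijection on irreducible components from the isomorphism $\Omega_\mu$ together with a dimension count, then verify that it coincides with the stated map $\Xi$. You correctly single out the second step as the crux, and your observation that $Z\cap T_\mu\cap R_\mu = \pi_2^{-1}(L_\mu)\cap(Z\cap R_\mu)$ is exactly right and close to the key point. However, the one-parameter-flow argument you sketch for the matching step does not work.

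The obstruction is that the flow you describe need not exist. You ask for a $\mathbb G_m$-action on $\Gr$ acting trivially on the $T_\mu\cap R_\mu$ factor of the $\Omega_\mu$-decomposition and contracting the $S_\mu\cap R_\mu$ factor onto $L_\mu$. These factors are the orbits of $L_\mu$ under $U^\vee_-$ and $U^\vee_+$, so such a flow (coming from a cocharacter of $T^\vee(\mathbb C)$, possibly with loop rotation) would have to fix every root subgroup $x_{\alpha^\vee}$ with $\alpha\in\Phi_\theta\cap\Phi_-$ and strictly contract every $x_{\alpha^\vee}$ with $\alpha\in\Phi_\theta\cap\Phi_+$. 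This forces the cocharacter to vanish on the span of $\Phi_\theta\cap\Phi_-$ while being nonzero on $\Phi_\theta\cap\Phi_+$, which often fails: in type $A_2$ with $\theta(\alpha_1)>0$, $\theta(\alpha_2)<0$, $\theta(\alpha_1+\alpha_2)>0$ one has $\Phi_\theta\cap\Phi_- = \{-\alpha_1,\,-\alpha_1-\alpha_2\}$ and $\Phi_\theta\cap\Phi_+ = \{\alpha_2\}$, and vanishing on the first set already forces vanishing on $\alpha_2$. More fundamentally, $U^\vee(\mathbb M) = U^\vee_+(\mathbb M)\bowtie U^\vee_-(\mathbb M)$ is only a Zappa--Sz\'ep product, so the hypothesis that a torus flow ``respects the $\Omega_\mu$-decomposition'' is precisely the thing that fails.

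The route you list as an ``alternative'' is what the paper does, and it is short: with $\mu=0$, write $g=g_+g_-=h_-h_+$; if $g\cdot L_0\in S_0$ then $g\in U^\vee_+(\mathbb M)$ so $g=h_+$, giving $Z\cap S_0\cap R_0\subset Z_s$; conversely $L_0\in Z_t$ is automatic, so taking $g_-=h_-=1$ shows $Z_s\cap S_0\cap R_0\subset Z\cap R_0$; the two inclusions give $Z_s=\overline{Z\cap S_0\cap R_0}$, and symmetrically for $Z_t$. A further caveat worth fixing: $Z\cap R_\mu$ lies in $\overline{S_\lambda}\cap\overline{T_\nu}\cap R_\mu$, not in $S_\lambda\cap T_\nu\cap R_\mu$, so Proposition~\ref{pr:IsomSTR} must first be upgraded to the closed strata (take the union over $\lambda'\in\lambda-Q_+$, $\nu'\in\nu+Q_+$); the paper states this explicitly at the start of its proof.
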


\begin{proof}
We denote the set of irreducible components of a topological space $X$
by $\Irr(X)$. The proof of Proposition~\ref{pr:IsomSTR} implies that
$\Omega_\mu$ restricts to an isomorphism of algebraic varieties
$$\overline{S_\lambda}\cap\overline{T_\nu}\cap R_\mu\xrightarrow\simeq
(\overline{S_\lambda}\cap T_\mu\cap R_\mu)\times
(\overline{T_\nu}\cap S_\mu\cap R_\mu),$$
so induces a bijection
$$\Irr\bigl(\overline{S_\lambda}\cap\overline{T_\nu}\cap R_\mu\bigr)
\xrightarrow\simeq\Irr\bigl(\overline{S_\lambda}\cap T_\mu\cap R_\mu\bigr)
\times\Irr\bigl(\overline{T_\nu}\cap S_\mu\cap R_\mu\bigr).$$
The dimension of $\overline{S_\lambda}\cap\overline{T_\nu}\cap R_\mu$
is less than or equal to $\rho(\lambda-\nu)$, and likewise
the dimensions of $\overline{S_\lambda}\cap T_\mu\cap R_\mu$
and $\overline{T_\nu}\cap S_\mu\cap R_\mu$ are bounded by
$\rho(\lambda-\mu)$ and $\rho(\mu-\nu)$, respectively.

The map $Z\mapsto Z\cap R_\mu$ defines a bijection from
$\mathscr Z_{\lambda,\nu}^\mu$ onto the set of irreducible components
of $\overline{S_\lambda}\cap\overline{T_\nu}\cap R_\mu$ of dimension
$\rho(\lambda-\nu)$, and there are similar bijections from
$\mathscr Z_{\lambda,\mu}^\mu$ and $\mathscr Z_{\mu,\nu}^\mu$ onto the
sets of irreducible components of $\overline{S_\lambda}\cap T_\mu\cap R_\mu$
and $\overline{T_\nu}\cap S_\mu\cap R_\mu$ of dimension $\rho(\lambda-\mu)$
and $\rho(\mu-\nu)$, respectively. Therefore the above bijection produces
by restriction a bijection $\Xi:\mathscr Z_{\lambda,\nu}^\mu\xrightarrow
\simeq\mathscr Z_{\lambda,\mu}^\mu\times\mathscr Z_{\mu,\nu}^\mu$,
and to conclude the proof it remains to check that $\Xi$ is the map
described in the statement.

To simplify the notation, we shall take $\mu=0$. By construction, $\Xi$
relates a Mirković--Vilonen cycle $Z\in\mathscr Z_{\lambda,\nu}^0$ and
a pair $(Z_t,Z_s)\in\mathscr Z_{\lambda,0}^0\times\mathscr Z_{0,\nu}^0$
if and only if
$$Z\cap R_0=\bigl\{g\cdot L_0\bigm|g_-\cdot L_0\in Z_t\cap(T_0\cap R_0)
\,\text{ and }\,h_+\cdot L_0\in Z_s\cap(S_0\cap R_0)\bigr\},$$
where as before $g\in U^\vee(\mathbb M)$ is written in the
form $g_+g_-$ or $h_-h_+$ with $g_+$, $h_+$ in $U^\vee_+(\mathbb M)$
and $g_-$, $h_-$ in $U^\vee_-(\mathbb M)$.

With these notations, if $g\cdot L_0\in S_0$, then
$g\in U^\vee_+(\mathbb M)$, therefore $g=h_+$.
It follows that $Z\cap(S_0\cap R_0)\subset Z_s\cap(S_0\cap R_0)$,
whence $\overline{Z\cap S_0\cap R_0}\subset Z_s$. For the reverse
inclusion, we note that $L_0\in Z_t\cap(T_0\cap R_0)$, so (taking
$g_-=h_-=1$) $Z_s\cap(S_0\cap R_0)\subset Z\cap R_0$, and therefore
$Z_s=\overline{Z_s\cap S_0\cap R_0}\subset\overline{Z\cap S_0\cap R_0}$.
We conclude that $Z_s=\overline{Z\cap S_0\cap R_0}$. The equality
$Z_t=\overline{Z\cap T_0\cap R_0}$ is established in like manner.
\end{proof}

We have assumed thus far in this section that $\theta\in Q_{\mathbb R}^*$
is regular, but this assumption was only made to simplify the exposition.
All the constructions presented are indeed valid in general provided we
replace $\Phi_\theta$ by either
$${}'\Phi_\theta=\bigl\{\alpha\in\Phi\bigm|\theta(\alpha)<0\bigr\}\cup
\bigl\{\alpha\in\Phi_+\bigm|\theta(\alpha)=0\bigr\}$$
or
$${}''\Phi_\theta=\bigl\{\alpha\in\Phi\bigm|\theta(\alpha)<0\bigr\}\cup
\bigl\{\alpha\in\Phi_-\bigm|\theta(\alpha)=0\bigr\}.$$

\subsection{Politeness of Mirković and Vilonen's basis}
\label{ss:MVPolite}
For each dominant weight $\lambda$, the costandard module
$\nabla(\lambda)$, as defined in section~\ref{ss:GeometricSatake},
is a subspace of $\mathscr O(G)$. It contains a unique vector
$v_\lambda$ whose restriction to $N$ is the constant function equal
to $1$, and this vector has weight $\lambda$. Given a second dominant
weight $\mu$, the multiplication map $f\mapsto fv_\mu$ embeds
$\nabla(\lambda)$ as an $N$-submodule of $\nabla(\lambda+\mu)$.
The restriction to $N$, to wit the map $f\mapsto f|_N$, defines an
embedding of $N$-modules $\Psi_\lambda:\nabla(\lambda)\to\mathscr O$,
and the diagram
$$\xymatrix@!=10pt{\nabla(\lambda)\ar[rr]^(.4){\bm?v_\mu}
\ar[dr]_(.4){\Psi_\lambda}&&\nabla(\lambda+\mu)
\ar[dl]^(.4){\Psi_{\lambda+\mu}}\\&\mathscr O&}$$
commutes. In this fashion $\mathscr O$ is identified with the direct
limit of the spaces $\nabla(\lambda)$.

As we saw, the Geometric Satake Equivalence provides an identification
$$\nabla(\lambda)\cong\bigoplus_{\nu\in P}
\text H_{2\rho(\lambda-\nu)}^\BM\bigl(\Gr^\lambda\cap T_\nu,k\bigr)$$
and the fundamental classes of the irreducible components of
$\Gr^\lambda\cap T_\nu$, for all $\nu\in P$, form a basis of
$\nabla(\lambda)$. Now let $\nu\in Q_-$ and let $Z\in\mathscr Z_{0,\nu}$.
If $\lambda$ is a dominant enough weight, then $Z$ is contained
in $z^{-\lambda}\,\overline\Gr^\lambda$. In this case, $z^\lambda Z$
is the closure of an irreducible component $Y$ of
$\Gr^\lambda\cap T_{\lambda+\nu}$ (\cite{Anderson}, Proposition~3);
furthermore, the image by $\Psi_\lambda:\nabla(\lambda)\to\mathscr O$
of the fundamental class of $Y$ depends only on $Z$ and not on
$\lambda$ (\cite{BaumannKamnitzerKnutson}, Proposition~6.1).
We denote this image by $b_Z$. For each $\nu\in Q_-$, the elements
$b_Z$ with $Z\in\mathscr Z_{0,\nu}$ form a basis of $\mathscr O_\nu$.
The Mirković--Vilonen basis of $\mathscr O$ is the family of all
these functions $b_Z$.

Let us resume the discussion in the last paragraph of
section~\ref{ss:GeometricSatake}, with the projective embedding
$\Upsilon:\Gr\to\mathbb P(V)$ derived from the choice of a quadratic
form $q$. Let $\mathfrak H$ be the direct sum of all weight subspaces
of $V$ save for the highest weight (with respect to the Cartan
subalgebra of the semi-direct product
$\widetilde{\mathfrak g^\vee}\rtimes\mathbb C\frac d{dz}$);
this is a hyperplane of $V$. We denote by $D$ the divisor on $\Gr$ cut
by $\mathbb P(\mathfrak H)$, and for $\nu\in P$, we set $D_\nu=z^\nu D$.
By~\cite{MirkovicVilonen}, Proposition~3.1, we then have
$$D_\nu\cap\overline{T_\nu}=\bigcup_{i\in I}\overline{T_{\nu+\alpha_i}}.$$
Given two Mirković--Vilonen cycles $Z$ and $Z'$ of weights $(\lambda,\nu)$
and $(\lambda,\nu+\alpha_i)$, respectively, we denote by
$i(Z',D_\nu\cdot Z)$ the multiplicity of $Z'$ in the intersection
product $D_\nu\cdot Z$.

The basis elements $b_Z$ can now be explicitly described in the
following way. If $Z$ is a Mirković--Vilonen cycle of weight $(0,\nu)$
and $\mathbf i=i_1\cdots i_n$ is a word of weight $-\nu$, we denote by
$\mathscr C_{\mathbf i,Z}$ the set of all chains
$\{L_0\}=Y_0\subset Y_1\subset\cdots\subset Y_n=Z$ of Mirković--Vilonen
cycles such that $Y_j\in\mathscr Z_{0,\nu_j}$ for each $j\in\inter0n$,
where $\nu_j=-(\alpha_{i_1}+\cdots+\alpha_{i_j})$.
By~\cite{BaumannKamnitzerKnutson}, Theorem~5.4, we then have
$$\langle e_{\mathbf i},b_Z\rangle=\frac1{q(\alpha_{i_1})\cdots
q(\alpha_{i_n})}\sum_{Y_\bullet\in\mathscr C_{\mathbf i,Z}}
i(Y_0,D_{\nu_1}\cdot Y_1)\cdots i(Y_{n-1},D_{\nu_n}\cdot Y_n).$$

We will need the following technical lemmas. The second one confirms
in part Anderson's description of the coproduct of $\mathscr O$ in
the Mirković--Vilonen basis (\cite{Anderson}, section~10). In both
statements, we assume that a stability parameter $\theta$ has been
fixed in $Q_{\mathbb R}^*$ and we adopt the notation of
section~\ref{ss:CuttingMVCycles}, with $\Phi_\theta$ replaced by either
${}'\Phi_\theta$ or ${}''\Phi_\theta$ if $\theta$ is not regular.

\begin{lemma}
\label{le:Sublemma}
Let $(\lambda,\mu,\nu)\in P^3$, let $i\in I$, and
let $(Z,Z')\in\mathscr Z_{\lambda,\nu}^\mu\times
\mathscr Z_{\lambda,\nu+\alpha_i}^\mu$ be such that $Z'\subset Z$.
Write $\Xi(Z)=(Z_t,Z_s)$ and $\Xi(Z')=(Z'_t,Z'_s)$. Then $Z_t=Z'_t$
and $i(Z'_s,D_\nu\cdot Z_s)=i(Z',D_\nu\cdot Z)$.
\end{lemma}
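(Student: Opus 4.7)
The plan is to deduce both assertions from the product decomposition $\Omega_\mu$ of Proposition~\ref{pr:IsomSTR} applied simultaneously to $Z$ and $Z'$, together with the observation that the divisor $D_\nu$, restricted to $\overline{T_\nu}\cap R_\mu$, is pulled back from the second factor of this decomposition.

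For the first equality $Z_t=Z'_t$, I would start from the explicit description established at the end of the proof of Proposition~\ref{pr:CuttingMVCycles}, which gives
$$\Omega_\mu(Z\cap R_\mu)=(Z_t\cap T_\mu\cap R_\mu)\times(Z_s\cap S_\mu\cap R_\mu),$$
together with its analogue for $Z'$. Since $Z'_s\in\mathscr Z_{\mu,\nu+\alpha_i}^\mu$, the subset $Z'_s\cap S_\mu\cap R_\mu$ is open dense in $Z'_s$ and hence nonempty. Projecting the inclusion $\Omega_\mu(Z'\cap R_\mu)\subset\Omega_\mu(Z\cap R_\mu)$ onto the first factor then yields $Z'_t\cap T_\mu\cap R_\mu\subset Z_t\cap T_\mu\cap R_\mu$; taking closures gives $Z'_t\subset Z_t$, and as both are irreducible of dimension $\rho(\lambda-\mu)$ they must be equal.

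For the intersection multiplicity, the key step is to show that the divisor $D_\nu$ restricted to $\overline{T_\nu}\cap R_\mu$ factors trivially through the first factor of $\Omega_\mu$. To this end, I would note that for $x\in R_\mu$ with $\Omega_\mu(x)=(y,z)$, the factorization $x=h_-h_+\cdot L_\mu$ used in the proof of Proposition~\ref{pr:IsomSTR} places $x$ and $z$ in the same $N^\vee_-(\mathbb K)$-orbit, via $h_-\in U^\vee_-(\mathbb M)\subset N^\vee_-(\mathbb K)$. Since each $T_\beta$ is such an orbit and $\overline{T_\beta}$ is a union of them, we get $x\in\overline{T_\beta}$ iff $z\in\overline{T_\beta}$ for every weight $\beta$. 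Combined with the identity $D_\nu\cap\overline{T_\nu}=\bigcup_{j\in I}\overline{T_{\nu+\alpha_j}}$ recalled above, this yields
$$\Omega_\mu\bigl(D_\nu\cap\overline{T_\nu}\cap R_\mu\bigr)=(T_\mu\cap R_\mu)\times\bigl(D_\nu\cap\overline{T_\nu}\cap S_\mu\cap R_\mu\bigr),$$
identifying $D_\nu|_{\overline{T_\nu}\cap R_\mu}$ with the pullback of a divisor on the second factor.

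Finally, I would compute $i(Z',D_\nu\cdot Z)$ at a generic point of $Z'$, which by density may be chosen in $Z'\cap R_\mu$. Via $\Omega_\mu$, this amounts to intersecting a divisor of the form (whole first factor)$\,\times F_2$ with the product $(Z_t\cap T_\mu\cap R_\mu)\times(Z_s\cap S_\mu\cap R_\mu)$ along the component $(Z_t\cap T_\mu\cap R_\mu)\times(Z'_s\cap S_\mu\cap R_\mu)$, using the already-established equality $Z_t=Z'_t$. The multiplicativity of local intersection numbers on products of smooth varieties then delivers $i(Z'_s,D_\nu\cdot Z_s)$, where the latter is computed on the dense open $Z_s\cap S_\mu\cap R_\mu$ of $Z_s$ and hence agrees with the global intersection number. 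The main obstacle I foresee is justifying this last product formula cleanly; smoothness of the factors $T_\mu\cap R_\mu$ and $S_\mu\cap R_\mu$, which are the orbits of $U^\vee_-(\mathbb K)$ and $U^\vee_+(\mathbb K)$ through $L_\mu$ by Lemma~\ref{le:SmallOrbs}, reduces the question to a classical local calculation.
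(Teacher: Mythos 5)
Your argument for $Z_t=Z'_t$ is correct and matches the paper's (the paper simply takes closures of the inclusion $Z'\cap T_\mu\cap R_\mu\subset Z\cap T_\mu\cap R_\mu$; your detour through the product decomposition is harmless).

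The intersection-multiplicity part has a genuine gap, and it is exactly at the step you flagged as the main obstacle. You correctly use the factorization $x=h_-\cdot y$ with $h_-\in z^\mu U^\vee_-(\mathbb M)z^{-\mu}\subset N^\vee_-(\mathbb K)$ to deduce that $x\in\overline{T_\beta}$ if and only if $y\in\overline{T_\beta}$, and hence that the \emph{support} of $D_\nu\cap\overline{T_\nu}\cap R_\mu$ is a product through $\Omega_\mu$. But this is a purely set-theoretic statement, and the intersection multiplicity $i(Z',D_\nu\cdot Z)$ is computed as the length of $\mathscr O_{Z,Z'}/(f)$, where $f$ is a local equation of the Cartier divisor $D_\nu$; it depends on $f$ itself, not just on $\{f=0\}$. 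The inference you make --- ``the support is a product, hence the divisor is a pullback from the second factor, hence multiplicities are multiplicative'' --- would require the relevant varieties (e.g.\ $\dot Z=Z\cap R_\mu$ or $\overline{T_\nu}\cap R_\mu$) to be normal so that the Cartier divisor is determined by its support with coefficients along prime divisors; Mirković--Vilonen cycles and these intersections are not normal in general, so this cannot be invoked.

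The paper closes this gap by proving a function-level identity rather than a set-level one. It introduces linear forms $\sigma,\tau$ on $V$ with kernels $z^\nu\mathfrak H$ and $z^\mu\mathfrak H$, views $f=\sigma/\tau$ as the equation of $D_\nu$ on $R_\mu$, and shows directly that $f(x)=f(y)$ for every $x\in\dot Z$ and $y=\mathrm{pr}_2\circ\widetilde\Omega_\mu(x)$: when $x,y\in D\cap\overline{T_\nu}$ both sides vanish, and when $x,y\in T_\nu$ one observes that $\tau$ is constant on the $U^\vee(\mathbb K)$-orbit of a highest-weight vector (so $\tau(h_-\cdot v_3)=\tau(v_3)$) and $\sigma$ is constant on the relevant $N^\vee_-(\mathbb K)$-orbit (so $\sigma(h_-\cdot v_3)=\sigma(v_3)$). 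This exact equality of functions is what lets both the equation of $D_\nu$ and the ideal of $\dot Z'$ factor through $\mathrm{pr}_2\circ\widetilde\Omega_\mu$, and only then does the multiplicativity of local multiplicities give $i(Z',D_\nu\cdot Z)=i(Z'_s,D_\nu\cdot Z_s)$. You would need to supply an argument of this strength (or an independent normality/reducedness input you do not currently have) to make your proof complete.
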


\begin{proof}
We observe that $Z'_t=\overline{Z'\cap T_\mu\cap R_\mu}\subset
\overline{Z\cap T_\mu\cap R_\mu}=Z_t$. As both $Z_t$ and $Z'_t$ are
Mirković--Vilonen cycles of weight $(\lambda,\mu)$, they have the
same dimension, and therefore the inclusion is an equality.

We set $\dot Z=Z\cap R_\mu$, $\dot Z_t=Z\cap T_\mu\cap R_\mu$,
$\dot Z_s=Z\cap S_\mu\cap R_\mu$, $\dot Z'=Z'\cap R_\mu$
and $\dot Z'_s=Z'\cap S_\mu\cap R_\mu$. These are affine varieties.
The map $\Omega_\mu$ restricts to an isomorphism of algebraic
varieties $\widetilde\Omega_\mu:\dot Z\to\dot Z_t\times\dot Z_s$.
Let $\text{pr}_2:\dot Z_t\times\dot Z_s\to\dot Z_s$ be the second
projection. We have
$\widetilde\Omega_\mu(\dot Z')=\dot Z_t\times\dot Z'_s$, that is,
$\dot Z'=(\text{pr}_2\circ\widetilde\Omega_\mu)^{-1}(\dot Z'_s)$.

The Lie algebra $\widetilde{\mathfrak g^\vee}$ integrates to a
central extension $E(G^\vee(\mathbb K))$ of the loop group
$G^\vee(\mathbb K)$, so that $V$ is an ordinary representation of
$E(G^\vee(\mathbb K))$. The cocycle on $G^\vee(\mathbb K)$ that
defines this central extension $E(G^\vee(\mathbb K))$ is trivial
when restricted to the subgroups $N^\vee_\pm(\mathbb K)$ or
$U^\vee(\mathbb K)$. Therefore these subgroups lift (non-canonically)
to the central extension, in other words their action on $\Gr$ lifts
to an action on $V$.

Let $\sigma$ and $\tau$ be linear forms on $V$ with kernels
$z^\nu\mathfrak H$ and $z^\mu\mathfrak H$, respectively. Then $\tau$
is nonzero on the line $\Upsilon(L_\mu)$ and it vanishes on all
the other weight subspaces of $V$, so it takes a constant nonzero
value on the $U^\vee(\mathbb K)$-orbit of any nonzero vector $v_1$
in $\Upsilon(L_\mu)$. In particular $\tau$, regarded as a section
of the line bundle $\mathscr L$, does not vanish on $R_\mu$.
If we also regard $\sigma$ as a section of~$\mathscr L$, then
$f=\sigma/\tau$ is a rational function on $\Gr$ which is regular on
$R_\mu$. This function $f$ is the equation of the divisor $D_\nu$ on
$R_\mu$.

We claim that if $x$ is in $\dot Z$ and if
$y=\text{pr}_2\circ\widetilde\Omega_\mu(x)$, then $f(x)=f(y)$.
To show this, we write $x=g\cdot L_\mu$ with
$g\in z^\mu\;U^\vee(\mathbb M)\,z^{-\mu}$, we decompose $g=h_-h_+$
with $h_\pm\in z^\mu\;U^\vee_\pm(\mathbb M)\,z^{-\mu}$, and we note
that $y=h_+\cdot L_\mu$ and $x=h_-\cdot y$. The closure
$\overline{T_\nu}$ contains $Z$ and is the disjoint union of the
semi-infinite orbit $T_\nu$ and its boundary $D\cap\overline{T_\nu}$,
which are both $N^\vee_-(\mathbb K)$-invariant. The points $x$ and $y$
are therefore both either in $D\cap\overline{T_\nu}$ or in $T_\nu$.
In the first case, we have $f(x)=f(y)=0$. In the second case, we note
that the vector $v_3=h_+\cdot v_1$ belongs to the line $\Upsilon(y)$.
As $\tau$ assumes a constant value on the orbit of $v_1$ under the
action of $U^\vee(\mathbb K)$, we have $\tau(h_-\cdot v_3)=\tau(v_3)$.
As $y$ is assumed to be in $T_\nu$, there exists
$g'\in N^\vee_-(\mathbb K)$ such that $y=g'\cdot L_\nu$,
so $v_2=g'^{-1}\cdot v_3$ is in $\Upsilon(L_\nu)$. We can now repeat
our argument: $\sigma$ is nonzero on the line $\Upsilon(L_\nu)$
and it vanishes on all the other weight subspaces of $V$, so it takes a
constant nonzero value on the $N^\vee_-(\mathbb K)$-orbit of $v_2$.
We deduce that $\sigma(h_-\cdot v_3)=\sigma(v_3)$, and therefore
$f(x)=f(y)$ in this case as well. Our claim is proved.

In conclusion, the equation of the divisor $D_\nu$ and the equation
of the subvariety $\dot Z'$ inside $\dot Z$ both factorize through
$\text{pr}_2\circ\widetilde\Omega_\mu$.
This implies that $i(Z',D_\nu\cdot Z)=i(Z'_s,D_\nu\cdot Z_s)$.
\end{proof}

\begin{lemma}
\label{le:CutCoprod}
Let $(\mu,\nu)\in P\times Q_-$ and let $Z\in\mathscr Z_{0,\nu}^\mu$.
\begin{enumerate}
\item
We have $L_\mu\in Z$, and if an element $L_\eta$ belongs to $Z$, then
$\mu-\eta$ is a nonnegative linear combination of roots in $\Phi_\theta$.
\item
Cut $Z$ in two Mirković--Vilonen cycles
$Z_t=\overline{Z\cap T_\mu\cap R_\mu}$ and
$Z_s=\overline{Z\cap S_\mu\cap R_\mu}$.
Then the component of weight $(\mu,\nu-\mu)$ of $\Delta(b_Z)$
is equal to $b_{Z_t}\otimes b_{z^{-\mu}Z_s}$.
\end{enumerate}
\end{lemma}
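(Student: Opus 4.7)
My plan for (i) is to exploit the $T^\vee$-invariance of $Z$ together with a contracting cocharacter. The subvarieties $S_0$ and $T_\nu$ are stable under $T^\vee$ (which fixes every $L_\lambda$ and normalizes $N^\vee_\pm(\mathbb K)$), so $Z\subset\overline{S_0\cap T_\nu}$ inherits this invariance. Choose an integral cocharacter $\chi\in X_*(T^\vee)=P$ with $\langle\alpha^\vee,\chi\rangle>0$ for every $\alpha\in\Phi_\theta$, which is possible because $\Phi_\theta$ lies in an open half-space. The identity $\chi(s)\,x_{\alpha^\vee}(a)\,\chi(s)^{-1}=x_{\alpha^\vee}(s^{\langle\alpha^\vee,\chi\rangle}a)$ shows that conjugation by $\chi(s)$ contracts $U^\vee(\mathbb K)$ to the identity as $s\to 0$, so $\lim_{s\to 0}\chi(s)\cdot(g\cdot L_\mu)=L_\mu$ for every $g\in U^\vee(\mathbb K)$. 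Picking $g\cdot L_\mu$ in the dense open subset $Z\cap R_\mu$ and using that $Z$ is closed in a projective ind-scheme then yields $L_\mu\in Z$. For the second statement, any $L_\eta\in Z$ lies in $\overline{Z\cap R_\mu}\subset\overline{R_\mu}$, and the $T^\vee$-fixed points of $\overline{R_\mu}$ are precisely the $L_\eta$ such that $\mu-\eta$ is a nonnegative integer combination of roots in $\Phi_\theta$, by a standard analysis of closures of $U^\vee(\mathbb K)$-orbits in $\Gr$.

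For (ii), I would pair both sides with $e_{\mathbf i}\otimes e_{\mathbf j}$ for all pairs of words $\mathbf i$ of weight $-\mu$ and $\mathbf j$ of weight $\mu-\nu$. Hopf duality reduces the statement to
\[
\langle e_{\mathbf{ij}},b_Z\rangle=\langle e_{\mathbf i},b_{Z_t}\rangle\,\langle e_{\mathbf j},b_{z^{-\mu}Z_s}\rangle,
\]
and, after invoking the chain formula recalled in section~\ref{ss:MVPolite} and cancelling the common prefactor $1/\prod q(\alpha_{i_k})\prod q(\alpha_{j_\ell})$, to a combinatorial equality between the sum over $\mathscr C_{\mathbf{ij},Z}$ and the product of the sums over $\mathscr C_{\mathbf i,Z_t}$ and $\mathscr C_{\mathbf j,z^{-\mu}Z_s}$.

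To prove this identity, I would construct an explicit bijection between $\mathscr C_{\mathbf i,Z_t}\times\mathscr C_{\mathbf j,z^{-\mu}Z_s}$ and the subset $\mathscr C_{\mathbf{ij},Z}^\circ\subset\mathscr C_{\mathbf{ij},Z}$ of chains $Y_\bullet$ such that $Y_{|\mathbf i|}=Z_t$ and each $Y_k$ for $k\ge|\mathbf i|$ belongs to $\mathscr Z_{0,\nu_k}^\mu$, sending $(Y'_\bullet,Y''_\bullet)$ to the chain defined by $Y_k=Y'_k$ for $k\le|\mathbf i|$ and $Y_k=\Xi^{-1}(Z_t,z^\mu Y''_{k-|\mathbf i|})$ for $k>|\mathbf i|$, where $\Xi$ is the cutting bijection of Proposition~\ref{pr:CuttingMVCycles}. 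The chain inclusions $Y_{k-1}\subset Y_k$ for $k>|\mathbf i|$ propagate from those of $Y''_\bullet$ via the monotonicity of $\Xi^{-1}$ in its second argument, and Lemma~\ref{le:Sublemma} gives $i(Y_{k-1},D_{\nu_k}\cdot Y_k)=i((Y_{k-1})_s,D_{\nu_k}\cdot(Y_k)_s)$ for every such $k$; after translation by $z^{-\mu}$, the products of intersection multiplicities on the two sides match term by term.

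The main obstacle will be to show that chains in $\mathscr C_{\mathbf{ij},Z}\setminus\mathscr C_{\mathbf{ij},Z}^\circ$ contribute zero to the left-hand sum. A preliminary step is to verify, using the isomorphism $\Omega_\mu$ of Proposition~\ref{pr:IsomSTR}, that $Z_t$ is the unique MV cycle of weight $(0,\mu)$ contained in $Z$ whose general point lies in $R_\mu$, so any competing $Y_{|\mathbf i|}\subset Z$ has generic point in some $R_\eta$ with $\eta\ne\mu$. For such a chain I would argue, using the description of $D_{\nu_k}$ as the divisor of the rational function $\sigma/\tau$ introduced in the proof of Lemma~\ref{le:Sublemma}, that at some step $k$ the cycle $Y_k$ fails to meet $R_\mu$ densely and the corresponding intersection multiplicity $i(Y_{k-1},D_{\nu_k}\cdot Y_k)$ vanishes. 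This vanishing, combined with the bijection above, will conclude the proof.
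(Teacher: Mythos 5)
Your part (i) is a correct elaboration of the paper's one-line argument (closedness and $T^\vee$-invariance of $Z$, plus $Z\subset\overline{R_\mu}$), and the setup of part (ii) — pairing with $e_{\mathbf i}\otimes e_{\mathbf j}$, reducing to a chain-counting identity, and building a bijection between chains — is exactly the paper's strategy. However, the final paragraph contains a genuine gap: you treat $\mathscr C^\circ_{\mathbf{ij},Z}\subsetneq\mathscr C_{\mathbf{ij},Z}$ as a real possibility and imagine having to kill the contribution of the complement, whereas in fact that complement is empty, and the vanishing argument you sketch would not work in any case.

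Here is why no "bad" chain exists. For any $Y_\bullet\in\mathscr C_{\mathbf{ij},Z}$ and any $j$, one has $Y_j\subset Z\subset\overline{R_\mu}$, so $Y_j\cap R_\mu$ is open in $Y_j$ (possibly empty). But $Y_m\in\mathscr Z_{0,\mu}$ is a closed irreducible $T^\vee(\mathbb C)$-invariant subset of $\overline{T_\mu}$, so by the antidominant contraction it contains $L_\mu$, and $L_\mu\in R_\mu$. Hence $Y_j\cap R_\mu\neq\varnothing$ for all $j\geq m$, and by openness the \emph{general} point of every such $Y_j$ lies in $R_\mu$ — there is no "$\eta\neq\mu$" to worry about. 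Since both $Y_m\cap T_\mu$ and $Y_m\cap R_\mu$ are dense open in $Y_m$, one gets $Y_m=\overline{Y_m\cap T_\mu\cap R_\mu}\subset Z_t$ and then $Y_m=Z_t$ by equidimensionality, so $Y_m$ is \emph{automatically} equal to $Z_t$ for every chain. With that, $\Xi(Y_j)=(Z_t,Y'_j)$ is defined for all $j\geq m$, your proposed map is a genuine bijection $\mathscr C_{\mathbf{ij},Z}\xrightarrow{\sim}\mathscr C_{\mathbf i,Z_t}\times\mathscr C_{\mathbf j,z^{-\mu}Z_s}$, and Lemma~\ref{le:Sublemma} matches the multiplicities term by term. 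There is no residual set of chains to argue about.

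I would also flag that the vanishing mechanism you propose, even as a hypothetical, is not sound: "$Y_k$ fails to meet $R_\mu$ densely" is not the same kind of statement as "$i(Y_{k-1},D_{\nu_k}\cdot Y_k)=0$"; the latter is a question about whether $Y_{k-1}$ occurs as a component of the divisor $D_{\nu_k}\cap Y_k$, and the regularity of $\sigma/\tau$ on $R_\mu$ says nothing about the behaviour of the intersection multiplicity on a cycle whose general point sits in a different stratum. Fortunately you never have to go there: replace that paragraph by the observation above and the proof closes.
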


\begin{proof}
The first item is standard: $L_\mu$ is in $Z$ because $Z$ meets $R_\mu$
and is closed and invariant under the action of $T^\vee(\mathbb C)$,
and the second point follows from the inclusion $Z\subset\overline{R_\mu}$.

Since $\mathscr Z_{0,\nu}^\mu$ is non-empty, both $\mu$ and $\nu-\mu$
are in $Q_-$. Let $\mathbf i=i_1\cdots i_m$ and
$\mathbf j=i_{m+1}\cdots i_n$ be words in the alphabet $I$ of weights
$-\mu$ and $-\nu+\mu$, respectively. We set
$\nu_j=-(\alpha_{i_1}+\cdots+\alpha_{i_j})$ for each $j\in\inter0n$.
Let $Y_\bullet$ be an element in $\mathscr C_{\mathbf{ij},Z}$. For each
$j\in\inter0n$, we have $Y_j\subset Z\subset\overline{R_\mu}$, so
$Y_j\cap R_\mu$ is open in $Y_j$. We observe that $L_\mu\in Y_m$, so
$Y_j\cap R_\mu$ is non-empty if $j\geq m$. Therefore for all
$j\in\inter mn$, the general point of $Y_j$ belongs to $R_\mu$. Since
both $Y_m\cap T_\mu$ and $Y_m\cap R_\mu$ are open dense in $Y_m$, we have
$$Y_m=\overline{Y_m\cap T_\mu\cap R_\mu}\subset\overline{Z\cap T_\mu
\cap R_\mu}=Z_t.$$
This implies that $Y_m=Z_t$, because both $Y_m$ and $Z_t$ are
Mirković--Vilonen cycles of weight $(0,\mu)$, and we deduce that
$\overline{Y_j\cap T_\mu\cap R_\mu}=Z_t$ for all $j\in\inter mn$.
In the notation of Proposition~\ref{pr:CuttingMVCycles}, we can then
write $\Xi(Y_j)=(Z_t,Y'_j)$, where
$Y'_j=\overline{Y_j\cap S_\mu\cap R_\mu}$ is in $\mathscr Z_{\mu,\nu_j}$.
Plainly then, $(Y_0,\ldots,Y_m)\in\mathscr C_{\mathbf i,Z_t}$ and
$(z^{-\mu}Y'_m,\ldots,z^{-\mu}Y'_n)\in\mathscr C_{\mathbf j,z^{-\mu}Z_s}$.
We can thus define a map $\mathscr C_{\mathbf{ij},Z}\to
\mathscr C_{\mathbf i,Z_t}\times\mathscr C_{\mathbf j,z^{-\mu}Z_s}$,
which is patently bijective.

From Lemma~\ref{le:Sublemma}, we deduce that for all $j\in\inter{m+1}n$,
$$i(Y_{j-1},D_{\nu_j}\cdot Y_j)=i(Y'_{j-1},D_{\nu_j}\cdot Y'_j)
=i(z^{-\mu}Y'_{j-1},D_{\nu_j-\mu}\cdot z^{-\mu}Y'_j).$$
A straightforward calculation then shows that
$\langle e_{\mathbf{ij}},b_Z\rangle=\langle e_{\mathbf i},
b_{Z_t}\rangle\langle e_{\mathbf j},b_{z^{-\mu}Z_s}\rangle$.
The truthfulness of this equality for all words $\mathbf i$ and
$\mathbf j$ of weights $-\mu$ and $-\nu+\mu$ establishes that the
component of weight $(\mu,\nu-\mu)$ of $\Delta(b_Z)$ is equal to
$b_{Z_t}\otimes b_{z^{-\mu}Z_s}$.
\end{proof}

The group $G^\vee(\mathbb K)$ and its subgroup $T^\vee(\mathbb C)$ act
on the affine Grassmannian $\Gr$. We define the moment polytope of a
closed irreducible $T^\vee(\mathbb C)$-invariant subvariety $X$ of
$\Gr$ as the convex hull in $P\otimes_{\mathbb Z}\mathbb R$ of the set
$\{\mu\in P\mid L_\mu\in X\}$. We refer to \cite{Anderson}, section~6
for an explanation of the terminology `moment polytope'. The moment
polytope of a closed irreducible $T^\vee(\mathbb C)$-invariant
subvariety is always a GGMS polytope (see \textit{loc.~cit.};
see also Lemma~2.3 in~\cite{Kamnitzer10}).

\begin{proposition}
\label{pr:MomentPolytope}
The moment polytope of a Mirković--Vilonen cycle $Z$ is equal to
$\underline\Pol(b_Z)$.
\end{proposition}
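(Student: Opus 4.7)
The proof proceeds by two separate inclusions, both leveraging Lemma~\ref{le:CutCoprod} and, for the reverse direction, the Baumann--Kamnitzer--Knutson formula for $\langle e_{\mathbf i},b_Z\rangle$ recalled earlier in this section.

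For the inclusion of the moment polytope into $\underline\Pol(b_Z)$, it suffices to check that each vertex $\mu_0$ of the moment polytope lies in $\underline L(b_Z)$. Given such $\mu_0$, I would pick a regular $\theta\in Q_{\mathbb R}^*$ whose minimum on the moment polytope is attained solely at $\mu_0$. Since the orbits $R_\mu$ associated with this $\theta$ partition $\Gr$, there is a unique weight $\mu'\in P$ with $Z\in\mathscr Z_{0,\nu}^{\mu'}$. Part~(i) of Lemma~\ref{le:CutCoprod} forces $\theta(\mu')\leq\theta(\eta)$ for every torus-fixed point $L_\eta\in Z$, so $\mu'$ minimizes $\theta$ on these fixed points, and hence on their convex hull, which is the moment polytope. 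Regularity of $\theta$ then yields $\mu'=\mu_0$. Applying part~(ii) of the same lemma, the weight $(\mu_0,\nu-\mu_0)$ component of $\Delta(b_Z)$ equals $b_{Z_t}\otimes b_{z^{-\mu_0}Z_s}$, which is nonzero; hence $\mu_0\in\underline L(b_Z)$.

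For the reverse inclusion, I would directly show $\underline L(b_Z)$ is contained in the moment polytope. Let $\mu\in\underline L(b_Z)$, so the weight $(\mu,\nu-\mu)$ component of $\Delta(b_Z)$ is nonzero. By duality between the coproduct of $\mathscr O$ and the product of $U(\mathfrak n)$, there exist words $\mathbf i$ and $\mathbf j$ in the alphabet $I$ of weights $-\mu$ and $\mu-\nu$ such that $\langle e_{\mathbf{ij}},b_Z\rangle\neq0$. The Baumann--Kamnitzer--Knutson formula writes this pairing as a sum of nonnegative rationals indexed by $\mathscr C_{\mathbf{ij},Z}$; the nonvanishing forces this chain set to be nonempty. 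Picking a chain $\{L_0\}=Y_0\subset Y_1\subset\cdots\subset Y_n=Z$ and setting $m=|\mathbf i|$, one reads off $Y_m\in\mathscr Z_{0,\mu}$ with $Y_m\subset Z$. Since any Mirković--Vilonen cycle of weight $(0,\mu)$ contains the torus-fixed point $L_\mu$---a standard consequence of the $T^\vee(\mathbb C)$-invariance of the cycle combined with the attracting-cocharacter argument on $T_\mu$---we deduce $L_\mu\in Z$, so $\mu$ belongs to the moment polytope.

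The delicate step is the first inclusion: one needs to match the prescribed vertex $\mu_0$ with the generic stratum weight $\mu'$ attached to the chosen stability parameter. This identification rests on Lemma~\ref{le:CutCoprod}(i), which places every torus-fixed point of $Z$ in the translate $\mu'-\mathbb R_{\geq0}\Phi_\theta$ and thus makes $\mu'$ the unique $\theta$-minimizer on the moment polytope as soon as $\theta$ is regular. The reverse direction is combinatorially more straightforward but tacitly relies on the geometric feature of Mirković--Vilonen cycles recorded above.
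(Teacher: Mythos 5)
Your proof is correct and takes essentially the same approach as the paper: the forward inclusion $\underline L(b_Z)\subset\{\mu\mid L_\mu\in Z\}$ via the Baumann--Kamnitzer--Knutson formula for $\langle e_{\mathbf{ij}},b_Z\rangle$ and nonemptiness of $\mathscr C_{\mathbf{ij},Z}$, and the reverse inclusion by picking a regular $\theta$ minimizing at the vertex, comparing with the generic stratum weight $\mu'$ through Lemma~\ref{le:CutCoprod}(i), and extracting the nonzero tensor term from Lemma~\ref{le:CutCoprod}(ii). The only difference from the paper's exposition is that you spell out a couple of background facts (that $\mu'$ minimizes $\theta$ on all torus-fixed points of $Z$, that $L_\mu$ lies in any cycle of $\mathscr Z_{0,\mu}$) which the paper uses implicitly; the mathematical content and line of argument coincide.
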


\begin{proof}
Let $\nu$ be the weight of $b_Z$, so that $Z\in\mathscr Z_{0,\nu}$.
Let us pick $\mu\in\underline L(b_Z)$. The component of weight
$(\mu,\nu-\mu)$ of $\Delta(b_Z)$ is nonzero, so there exist words
$\mathbf i$ and $\mathbf j$ of weights $-\mu$ and $-\nu+\mu$,
respectively, such that $\langle e_{\mathbf{ij}},b_Z\rangle\neq0$.
Therefore $\mathscr C_{\mathbf{ij},Z}\neq\varnothing$, so there exists a
Mirković--Vilonen cycle $Z'\in\mathscr Z_{0,\mu}$ which is contained in
$Z$. Then $L_\mu\in Z$ and $\mu$ belongs to the moment polytope of $Z$.
We conclude that $\underline L(b_Z)$, and therefore its convex hull
$\underline\Pol(b_Z)$, is contained in the moment polytope of $Z$.

To prove the reverse inclusion, we choose a vertex $\mu$ of the moment
polytope of $Z$. Let $\theta$ be a regular element in $Q_{\mathbb R}^*$
such that $-\theta$ lies in the normal cone at $\mu$ of this polytope.
Adopting the notation of section~\ref{ss:CuttingMVCycles}, we claim
that the general point of $Z$ belongs to the semi-infinite orbit $R_\mu$.

In fact, denote by $\lambda$ the weight such that $Z\cap R_\lambda$ is
open dense in $Z$. By Lemma~\ref{le:CutCoprod}, as $L_\mu$ belongs to
$Z$, the difference $\lambda-\mu$ is a nonnegative linear combination
of roots $\alpha$ such that $\theta(\alpha)<0$. It follows that
$\theta(\lambda-\mu)\leq0$, with equality only if $\lambda=\mu$. On the
other hand, $\lambda$ belongs to the moment polytope of $Z$, and our choice
of $\theta$ implies that $\theta(\lambda-\mu)\geq0$. We then conclude that
$\lambda=\mu$, as announced.

Returning to the main proof, we deduce from Lemma~\ref{le:CutCoprod}
that the component of weight $(\mu,\nu-\mu)$ of $\Delta(b_Z)$ is
equal to $b_{Z_t}\otimes b_{z^{-\mu}Z_s}$ with
$(Z_t,Z_s)\in\mathscr Z_{0,\mu}\times\mathscr Z_{\mu,\nu}$. This is
not zero, so $\mu\in\underline L(b_Z)$. We conclude that all the
vertices of the moment polytope of $Z$ belong to $\underline\Pol(b_Z)$.
\end{proof}

\begin{theorem}
\label{th:MVPolite}
The Mirković--Vilonen basis of $\mathscr O$ is polite.
\end{theorem}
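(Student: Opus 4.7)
My plan is to verify the four conditions of Definition~\ref{de:PoliteBasis} by running the argument that proves Proposition~\ref{pr:SCPolite}, with the geometric cutting of Mirković--Vilonen cycles from section~\ref{ss:CuttingMVCycles} playing the role previously played by the torsion pair decomposition of $\Lambda$-modules. Conditions~(i) and~(ii) are routine, condition~(iii) carries the main substance, and condition~(iv) will follow as a mirror image of~(iii).

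Condition~(i) is built into the construction, since each $b_Z$ is a homogeneous vector of weight $\nu$ for $Z\in\mathscr Z_{0,\nu}$. For condition~(ii), I observe that $\overline{S_0\cap T_{-n\alpha_i}}$ is irreducible of dimension $n$ and hence is the unique element $Z$ of $\mathscr Z_{0,-n\alpha_i}$; I then compute $b_Z$ by applying the explicit formula at the end of section~\ref{ss:GeometricSatake} to the unique chain of cycles in $\mathscr C_{ii\cdots i,Z}$, and a brief tracking of the intersection multiplicities against the $q(\alpha_i)^{-n}$ normalization identifies $b_Z$ with $\zeta_i^n$.

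For condition~(iii), I fix $\theta\in Q_{\mathbb R}^*$, $\alpha\in\Gamma_\theta$, and a weight-homogeneous $f\in{}^\theta F'_\alpha\mathscr O$ of $\Gamma_\theta$-degree $\gamma$, and I expand $f=\sum_{i=1}^\ell c_i\,b_{Z_i}$ on the Mirković--Vilonen basis, with every $c_i$ nonzero and each $Z_i\in\mathscr Z_{0,\nu_i}$. For each index $i$, let $\mu_i$ be the unique weight characterized by $Z_i\in\mathscr Z^{\mu_i}_{0,\nu_i}$. The second half of the proof of Proposition~\ref{pr:MomentPolytope} shows that $\mu_i$ is precisely the $\theta$-extremal vertex of $\underline\Pol(b_{Z_i})$, so $J_\theta(\mu_i)$ is the $\leq'$-maximum of $L_\theta(b_{Z_i})$. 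Let $\beta$ be the $\leq'$-largest of these maxima and $I$ the subset of indices realizing it. Lemma~\ref{le:CutCoprod}(ii) identifies the homogeneous component of degree $(\beta,\gamma-\beta)$ of $\Delta(f)$ with
$$\sum_{i\in I}c_i\,b_{(Z_i)_t}\otimes b_{z^{-\mu_i}(Z_i)_s}.$$
The bijectivity of the cutting map $\Xi$ from Proposition~\ref{pr:CuttingMVCycles} guarantees that the pairs $\bigl((Z_i)_t,(Z_i)_s\bigr)$ are pairwise distinct for $i\in I$, so this sum cannot vanish. Since $f\in{}^\theta F'_\alpha\mathscr O$, this forces $\beta\leq'\alpha$, and consequently every $b_{Z_i}$ belongs to ${}^\theta F'_\alpha\mathscr O$. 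The very same extraction procedure, applied degree by degree, also yields the compatibility with the isomorphism $\overline\Delta{}'$ of Proposition~\ref{pr:Splitting'}. Condition~(iv) is then proved by replaying this argument with the cutting convention based on ${}''\Phi_\theta$ from the end of section~\ref{ss:CuttingMVCycles}.

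The principal technical obstacle will not be the algebraic bookkeeping, which is fully dictated by the framework of section~\ref{se:Stability}, but the correct marriage of the combinatorics of $\leq'$ with the geometry of the semi-infinite orbit strata: one must recognize the weight $\mu_i$ of the open orbit of $Z_i$ as the $\theta$-minimizing vertex of its moment polytope. Once that identification is in hand, Lemma~\ref{le:CutCoprod} supplies the required leading term of the coproduct (playing the role played by \cite{GeissLeclercSchroer11}, Lemma~9.5 in the semicanonical case) and the bijectivity of $\Xi$ in Proposition~\ref{pr:CuttingMVCycles} supplies the noncancellation needed to conclude.
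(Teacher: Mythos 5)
Your plan reproduces the paper's own proof almost verbatim: conditions~(i) and~(ii) are dispatched at the outset (the paper cites Proposition~5.5 of \cite{BaumannKamnitzerKnutson} for~(ii) rather than recomputing $b_Z=\zeta_i^n$ directly, but your direct tracking of the intersection multiplicities is a viable alternative), and for condition~(iii) you run exactly the same extraction: expand $f=\sum c_i b_{Z_i}$, locate $\mu_i$ as the weight of the dense semi-infinite orbit stratum of $Z_i$, invoke Lemma~\ref{le:CutCoprod}(ii) for the leading coproduct term, and use the injectivity of $\Xi$ from Proposition~\ref{pr:CuttingMVCycles} to rule out cancellation, just as the paper does with the torsion-pair argument transposed to cycles.

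One point deserves tightening. You justify the identification of $J_\theta(\mu_i)$ as the $\leq'$-maximum of $L_\theta(b_{Z_i})$ by appealing to ``the second half of the proof of Proposition~\ref{pr:MomentPolytope},'' but that proof is carried out for a \emph{regular} stability parameter $\theta$ (one for which $-\theta$ lies in an open normal cone at a vertex), whereas in the verification of condition~(iii) the parameter $\theta$ is arbitrary and the extremal locus need not be a single vertex. The fix is the one the paper adopts: work throughout section~\ref{ss:CuttingMVCycles} with the closed root set ${}'\Phi_\theta=\{\alpha\mid\theta(\alpha)<0\}\cup\{\alpha\in\Phi_+\mid\theta(\alpha)=0\}$, so that Lemma~\ref{le:CutCoprod}(i) gives, for every $\eta\in\underline L(b_{Z_i})$, that $\mu_i-\eta$ is a nonnegative combination of roots $\alpha$ with $\theta(\alpha)<0$ or ($\theta(\alpha)=0$ and $\height\alpha>0$); this yields $J_\theta(\eta)\leq' J_\theta(\mu_i)$ with equality only when $\eta=\mu_i$, without ever passing through Proposition~\ref{pr:MomentPolytope}. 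You clearly intend this (you name ${}''\Phi_\theta$ for condition~(iv), so ${}'\Phi_\theta$ for~(iii) is implicit), but the appeal to Proposition~\ref{pr:MomentPolytope} should be replaced by the direct derivation from Lemma~\ref{le:CutCoprod}(i) in the ${}'\Phi_\theta$-setting.
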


\begin{proof}
By construction, the Mirković--Vilonen basis satisfies the
condition~(i) in the definition of polite bases. The condition~(ii)
follows from Proposition~5.5 in~\cite{BaumannKamnitzerKnutson}.

We now show that the Mirković--Vilonen basis satisfies condition~(iii).
Let $\theta\in Q_{\mathbb R}^*$, let $(\alpha,\gamma)\in\Gamma_\theta^2$,
and let $f$ be a homogenous element in ${}^\theta F'_\alpha\mathscr O$
of degree $\gamma$. Let us expand $f$ on the Mirković--Vilonen basis
and write $f=\sum_{i=1}^\ell c_ib_{Z_i}$, where each scalar
$c_i$~is~nonzero and where $Z_i$ is in $\mathscr Z_{0,\nu_i}$ for a
certain weight $\nu_i$. We adopt the notation of
section~\ref{ss:CuttingMVCycles} with $\Phi_\theta$ replaced by
${}'\Phi_\theta$. We denote by $\mu_i$ the weight such that the
general point of $Z_i$ belongs to the semi-infinite orbit~$R_{\mu_i}$
and we cut $Z_i$ at $L_{\mu_i}$, producing cycles $(Z_i)_t$ and $(Z_i)_s$
in $\mathscr Z_{0,\mu_i}$ and $\mathscr Z_{\mu_i,\nu_i}$, respectively.

By the proof of Proposition~\ref{pr:MomentPolytope}, any element
$\eta\in\underline L(b_{Z_i})$ verifies $L_\eta\in Z_i$. It then
follows from Lemma~\ref{le:CutCoprod} that $\mu_i-\eta$ is a nonnegative
linear combination of roots $\alpha$ such that either $\theta(\alpha)<0$,
or $\theta(\alpha)=0$ and $\height\alpha>0$. Therefore
$J_\theta(\eta)\leq'J_\theta(\mu_i)$, with equality only if $\eta=\mu_i$.
We conclude that $J_\theta(\mu_i)$ is the largest element of
$L_\theta(b_{Z_i})$ with respect to the order $\leq'$.

Let $\beta$ be the largest element among the degrees $J_\theta(\mu_i)$
with respect to the total order $\leq'$ and let $I$ be the set of all
indices $i$ such that $J_\theta(\mu_i)=\beta$.
Then the homogeneous component of degree
$(\beta,\gamma-\beta)$ of $\Delta(f)$ is
$\sum_{i\in I}c_i\,b_{(Z_i)_t}\otimes b_{z^{-\mu_i}(Z_i)_s}$. Since the
pairs $((Z_i)_t,(Z_i)_s)$ are all different, this sum cannot be zero,
so $\beta\in L(f)$ and therefore $\beta\leq'\alpha$. It follows that
for each $i\in\inter1\ell$, the set $L_\theta(b_{Z_i})$ is
contained in $\alpha+\Pi'$ and the function $b_{Z_i}$
belongs to ${}^\theta F'_\alpha\mathscr O$. We conclude that
${}^\theta F'_\alpha\mathscr O$ is spanned by its intersection with
the Mirković--Vilonen~basis. The same arguments imply the
compatibility of the Mirković--Vilonen basis with the isomorphism
$\overline\Delta'$.

Condition~(iv) in the definition of polite bases can be checked
similarly, replacing instead $\Phi_\theta$ by
${}''\Phi_\theta$ in section~\ref{ss:CuttingMVCycles}.
\end{proof}

\begin{other}{Example}
Let $W$ be the Weyl group of $G$, endowed with the Bruhat order.
Let $\lambda$ be a dominant weight and let $w$ be an element in $W$.
The costandard module $\nabla(\lambda)$ contains a vector
$v_{w\lambda}$ of weight $w\lambda$, unique up to multiplication by
a scalar. Let $\nabla(\lambda)_w$ be the $N$-submodule of
$\nabla(\lambda)$ generated by $v_{w\lambda}$
and let $f_{w,\lambda}$ be the image of $v_{w\lambda}$ by the map
$\Psi_\lambda:\nabla(\lambda)\to\mathscr O$. It is well-known that
$f_{w,\lambda}$ belongs to all the perfect bases of $\mathscr O$
(provided that $v_{w\lambda}$ is suitably normalized), therefore to
all the polite bases. In particular, $\underline\Pol(f_{w,\lambda})$ is
a Mirković--Vilonen polytope. Now, since $\Psi_\lambda$ is an embedding
of $N$-modules, the polytope $\lambda+\underline\Pol(f_{w,\lambda})$
is the convex hull of the set of weights of $\nabla(\lambda)_w$.
On the other hand, it follows from the Demazure character formula that
this polytope is the convex hull of $\{x\lambda\mid x\in W,\;x\leq w\}$.
We recover in this way a result of Naito and Sagaki~(\cite{NaitoSagaki},
Theorem~4.1.5 and~Remark~4.1.6).
\end{other}

Pierre Baumann,
Institut de Recherche Mathématique Avancée,
Université de Strasbourg et CNRS UMR 7501,
7 rue René Descartes,
67000 Strasbourg,
France.\\
\texttt{baumann@math.unistra.fr}

\end{document}